\title[Mirror symmetry]{Residue mirror symmetry for Grassmannians}
\author[B.~Kim]{Bumsig Kim}
\address{
Korea Institute for Advanced Study,
85 Hoegi-ro
Dondaemun-gu
Seoul 02455,
Republic of Korea
}
\email{bumsig@kias.re.kr}
\author[J.~Oh]{Jeongseok Oh}
\address{
Korea Institute for Advanced Study,
85 Hoegi-ro
Dondaemun-gu
Seoul 02455,
Republic of Korea
}
\email{batistuta@kaist.ac.kr}
\author[K.~Ueda]{Kazushi Ueda}
\address{
Graduate School of Mathematical Sciences,
The University of Tokyo,
3-8-1 Komaba
Meguro-ku
Tokyo
153-8914
Japan.}
\email{kazushi@ms.u-tokyo.ac.jp}
\author[Y.~Yoshida]{Yutaka Yoshida}
\address{
Kavli IPMU (WPI), UTIAS, University of Tokyo
Kashiwa, Chiba 277-8583, Japan
}
\email{yyyyosida@gmail.com}
\date{}
\begin{document}

\maketitle

\begin{abstract}
Motivated by recent works on localizations
in A-twisted gauged linear sigma models,
we discuss a generalization of toric residue mirror symmetry
to complete intersections in Grassmannians.
\end{abstract}

\setcounter{tocdepth}{1}
\tableofcontents

\section{Introduction}

\emph{A-twisted gauged linear sigma models}
are 2-dimensional topological field theories
introduced by Witten \cite{MR1232617}.
An A-twisted gauged linear sigma model
is specified
by a reductive algebraic group $G$
(or its compact real form)
called the \emph{gauge group},
an affine space $W$
with a linear action of $G \times \bGm$
called the \emph{matter},
and an element $\xi$
of the dual
$\frakz^*$
of the center of the Lie algebra
of $G$
called the \emph{Fayet--Iliopoulos parameter}.
The weights of the $\bGm$-actions are called \emph{R-charges}.
One can also introduce a \emph{superpotential} in the theory,
which is a $G$-invariant function on $W$ of R-charge 2.
The \emph{correlation functions},
which are quantities of primary interest,
do not depend on the potential.

An A-twisted gauged linear sigma model
with a suitable Fayet--Iliopoulos parameter
is expected to be equivalent
to the topological sigma model
whose target is the classical vacuum subspace of the symplectic reduction $W \GIT_\xi G$.
This comes from a stronger expectation
that the low-energy limit of a gauged linear sigma model
should give the non-linear sigma model
whose target is the symplectic reduction $W \GIT_\xi G$.

A prototypical example is the case
$G = \bGm$ and $W = \bA^6$,
with the action
\begin{align}
 G \times \bGm \ni (\alpha, \beta) \colon
  (z_1, \ldots, z_5, P) \mapsto (\alpha z_1, \ldots, \alpha z_5, \alpha^{-5} \beta^2 P)
\end{align}
and a potential $P f$,
which is the product 
of the variable $P$ and
a homogeneous polynomial $f$ in $z_1, \ldots, z_5$ of degree 5.
The symplectic reduction $W \GIT_\xi G$
for the positive $\xi$ gives the total space
of the bundle $\cO_{\bP^4}(-5)$.
The R-charge of the $P$-field indicates
that the target space should be considered
not as a manifold
but as a supermanifold,
where the parity of the fiber is odd.

One candidate for a mathematical theory
of A-twisted gauged linear sigma models
is symplectic vortex invariants
\cite{MR1777853,MR1959059,
MR1953239,MR2554933,
MR3010172,
MR3221852}
and their generalizations incorporating potentials
\cite{1405.6352,1506.02109}.
Another candidate is quasimap theory,
which is an intersection theory
on moduli spaces of maps
to the quotient stacks $[W/G]$.
A review of the latter theory,
with historical remarks and extensive references,
can be found in \cite{Ciocan-Fontanine--Kim}.
These two approaches should be related
by Hitchin--Kobayashi correspondence
for vortices \cite{MR1085139,MR1801657,1301.7052}.

When the gauge group is abelian,
quasimap theory as a mathematical theory
of A-twisted gauged linear sigma models
goes back to \cite{MR1336089}.
The relation with the Yukawa coupling of the mirror
is formulated as \emph{toric residue mirror conjecture}
in \cite{MR1988969,MR2019144}
and proved in
\cite{MR2104791,MR2099774,
MR2147350,MR2218757}.

Quasimap theory in the special case of projective hypersurfaces
is also studied in the insightful paper
\cite{MR1354600},
where a heuristic relation
with semi-infinite homologies of loop spaces
is discussed.
This eventually leads to Givental's proof
\cite{MR1408320}
of classical mirror symmetry
\cite{MR1115626}
for the quintic 3-fold.
This has been extended to toric complete intersections
in \cite{MR1653024}.

The correlation functions
of A-twisted gauged linear sigma models
in the cases when gauge groups are not necessarily abelian
are computed in \cite{MR3383085, MR3370259}
using supersymmetric localization of path integrals.
The result is given in terms of Jeffrey--Kirwan residues,
and reproduces the results of \cite{MR1336089}
in abelian cases.

The aim of this paper is twofold.
One is to give an expository account of quasimap theory
and its relation to other subjects
such as instantons and integrable systems.
The other is to formulate \pref{cj:main},
which states that the correlation function
defined in \pref{eq:correlator}
in terms of residues
coincides with
the generating function
of quasimap invariants
defined in \pref{eq:quasimap_correlator},
and prove it for Grassmannians
in \pref{sc:RMSG}.
This can be considered as a generalization
of toric residue mirror symmetry to Grassmannians.
We also show in \pref{sc:TR}
that a slightly weakened version of
toric residue mirror conjecture follows
from Givental's mirror theorem.
Nothing else in this paper is new.


This paper is organized as follows:
In \pref{sc:correlator},
we recall the description
of correlation functions
of A-twisted gauged linear sigma models
given in \cite{MR3383085, MR3370259}.
In \pref{sc:QSPS},
we recall the definition of the quasimap spaces
$\Q(\bP^{n-1};d)$.
They are compactifications of the spaces of holomorphic maps
of degrees $d$ from $\bP^1$ to $\bP^{n-1}$, and
play an essential role
in Givental's homological geometry
\cite{MR1354600,MR1403947}.
%
In \pref{sc:PCI},
we recall toric residue mirror symmetry
for Calabi--Yau complete intersections in projective spaces.
%
In \pref{sc:concave},
we discuss quasimap invariants of concave bundles.
In \pref{sc:CMSTH},
we recall classical mirror symmetry for toric hypersurfaces
proved in \cite{MR1653024}.
The exposition in \pref{sc:CMSTH} follows
\cite{MR3112512} closely.
In \pref{sc:QSTV},
we briefly recall the definition of quasimap spaces for toric varieties
due to \cite{MR1336089}.
In \pref{sc:TR},
we show that a slightly weakened version
of toric residue mirror conjecture for CY hypersurfaces
follows from classical mirror symmetry.
In \pref{sc:Martin},
we recall a theorem of Martin
which relates integration on a symplectic quotient by a compact Lie group
to that on the quotient by a maximal torus.
In \pref{sc:QS},
we recall the definition of quasimap spaces
to GIT quotients,
which are called \emph{quasimap graph spaces}
in \cite{MR3272909}.
The quasimap spaces
come with the universal $G$-bundle and
the canonical virtual fundamental classes,
which allow us to define numerical invariants.
We formulate \pref{cj:main},
which states that correlation functions
of A-twisted gauged linear sigma models
given in \pref{eq:correlator}
are generating functions
of quasimap invariants.
There is a natural $\bGm$-action
on the quasimap graph space
coming from the $\bGm$-action on the domain curve.
There is a distinguished connected component
of the fixed locus of this action,
which is used to define the \emph{$I$-function}.
In \pref{sc:QSG},
the quasimap spaces and the $I$-functions for Grassmannians
are recalled from \cite{MR2110629}.
%
%
In \pref{sc:RMSG},
we prove \pref{cj:main} for Grassmannians.
For this purpose,
we introduce \emph{abelianized quasimap spaces}
for Grassmannians,
which allows us to relate quasimap invariants for Grassmannians
with correlations function in \pref{eq:correlator}.
%
In \pref{sc:B/g},
we discuss the relation between gauged linear sigma models
and Bethe ansatz
following \cite{MR2570974}.
In Sections
\ref{sc:instanton},
\ref{sc:monopole}, and
\ref{sc:vortex},
we recall the relations of quasimaps
with instantons, monopoles, and vortices respectively.

\ \\
\emph{Acknowledgements}:
We thank Ionu\c t Ciocan-Fontanine, Hiroshi Iritani, and Makoto Miura
for valuable discussions.
We also thank the anonymous referees
for suggesting several improvements, and
Korea Institute for Advanced Study
for financial support and excellent research environment.
K.~U.~is supported by JSPS KAKENHI Grant Numbers
24740043,
15KT0105,
16K13743, and
16H03930.
B.~K.~is supported by the KIAS individual grant MG016403. 

\section{Correlation functions of A-twisted gauged linear sigma models}
 \label{sc:correlator}

\subsection{}

Let $G$ be a reductive algebraic group of rank $r$, and
$W$ be a representation of $G \times \bGm$.
The center of $G$ and its Lie algebra
will be denoted by $Z(G)$ and $\fz$.
Fix a maximal torus $T$ of $G$,
and let $\ft$ be its Lie algebra.
The set of roots, its subset of positive roots, and the Weyl group
will be denoted by $\Delta$, $\Delta_+$, and $\W \coloneqq N(T) / T$.
Let
$
 W = \bigoplus_{i=1}^N W_i
$
be the weight space decomposition of $W$
with respect to the action of $T \times \bGm$.
The weight of $W_i$ will be denoted by
$(\rho_i, r_i) \in \ft^\vee \oplus  \bZ$,
and $r_i$ will be called the \emph{R-charge}.
%
If $W$ admits an action of another torus $H$
commuting with the action of $G \times \bGm$,
then
one can introduce the \emph{twisted mass} $\lambda \in \frakh$ in the theory,
which corresponds to the equivariant parameter for the $H$-action.
The $T \times \bGm \times H$-weight of $W_i$ will be denoted by
$(\rho_i, r_i, \nu_i) \in \frakt^\vee \oplus \bZ \oplus \frakh^\vee$.
We also introduce the \emph{complexified Fayet--Illiopoulos parameter}
$t' \in \fz^\vee  \otimes_{\bR} \bC$,
which corresponds to the complexified K\"ahler form
of the symplectic quotient.
Here, we save the unprimed symbol $t$
for the indeterminate
in the generating function
of quasimap invariants
(see \pref{eq:tt'1}, \pref{eq:tt'2}, \pref{eq:tt'4} and \pref{eq:tt'3}).

For $d \in \ft$
and  $t' \in \fz^\vee$,
the composition 
of the surjection
$
 \ft^\vee \twoheadrightarrow \fz^\vee
$
dual to the inclusion
$
 \fz \hookrightarrow \ft
$
and the evaluation
$
 \ft^\vee \times \ft \to \bC
$
will be denoted by
$
 \inner{t'}{d}
$
or
$
t'(d).
$

\subsection{}

For $d \in \ft$ and $x \in \ft$, let
\begin{align} \label{eq:Zd}
 Z_d(x) &\coloneqq \Zvec_d(x) \Zmat_d(x)
\end{align}
be the product of
\begin{align} \label{eq:Zvecd}
 \Zvec_d(x) \coloneqq \prod_{\alpha \in \Delta_+} (-1)^{\alpha(d) +1} \alpha^2 (x)
\end{align}
and
\begin{align} \label{eq:Zmatd}
 \Zmat_d(x) &\coloneqq
  \prod_{i=1}^N (\rho_i(x) + \nu_i (\lambda) )^{r_i-\rho_i (d)-1}.
\end{align}
Here the superscripts `vec' and `mat' stands
for the vector multiplet
and the matter chiral multiplet respectively.
According to \cite{MR3383085, MR3370259},
the correlation function
of a $\W$-invariant polynomial
$
 P(x) \in \bC[\ft]^\W
$
on a 2-sphere is given,
up to sign introduced by hand,
by
\begin{align} \label{eq:correlator}
 \cor{P(x)}
= \frac{1}{|\W|} \sum_{d \in \mathsf{P}^\vee} e^{\inner{t'}{d}}
   \JK_\fc(Z_d(x) P(x)).
\end{align}
Here $\mathsf{P}^\vee$ is the coweight lattice of $G$ and
$\JK_\frakc$
is the Jeffrey--Kirwan residue
defined in \cite[Section 2]{MR2104791}
(cf.~also \cite{MR1710758}).
The cone $\fc \subset \fz^\vee$ is the ample cone
of the GIT quotient determined by the Fayet--Iliopoulos parameter $\eta$.

\subsection{}

One can introduce a variable $\z$
associated with the background value
of an auxiliary gauge field in the gravity multiplet.
This corresponds to the equivariant parameter
for the $\bGm$-action on the domain curve.
This turns \eqref{eq:Zd} into
the product
\begin{align} \label{eq:Zd2}
 Z_d(x;\z) &\coloneqq \Zvec_d(x;\z) \Zmat_d(x;\z)
\end{align}
of
\begin{align} \label{eq:Zvecd2}
 \Zvec_d(x;\z) &\coloneqq
  \prod_{\alpha \in \Delta_+} (-1)^{\alpha(d) +1} \alpha(x) \alpha(x + d \z)
\end{align}
and
\begin{align} \label{eq:Zmatd2}
 \Zmat_d(x;\z) &\coloneqq
  \prod_{i=1}^N
  \frac{\prod_{l=-\infty}^{-1}
   \lb \rho_i(x) + \nu_i(\lambda) - \lb l + \frac{r_i}{2} \rb \z \rb
  }
  {\prod_{l=-\infty}^{\rho_i(d)-r_i}
   \lb \rho_i(x) + \nu_i(\lambda) - \lb l + \frac{r_i}{2} \rb \z \rb
  },
\end{align}
and the correlation function of
$
 P(x) \in \bC[\ft]^\W
$
is given by
\begin{align} \label{eq:correlator2}
 \cor{P(x)}^{H \times \bGm}
= \frac{1}{|\W|} \sum_{d \in \mathsf{P}^\vee} e^{\inner{t'}{d}}
   \JK_\fc (Z_d(x;\z) P(x)).
\end{align}

\subsection{}

Another quantity of interest is the \emph{effective twisted superpotential}
on the Coulomb branch,
or the \emph{effective potential} for short.
It is defined as the sum
\begin{align} \label{eq:Weff}
 \Weff(x;t') &\coloneqq \WFI(x;t') + \Wvec(x) + \Wmat(x)
\end{align}
of the Fayet-Illiopoulos term
\begin{align} \label{eq:WFI}
 \WFI(x;t') &\coloneqq \inner{t'}{x},
\end{align}
the vector multiplet term
\begin{align} \label{eq:Wvec}
 \Wvec(x) &\coloneqq - \pi \sqrt{-1} \sum_{\alpha \in \Delta^+} \alpha(x),
\end{align}
and the matter term
\begin{align} \label{eq:Wmat}
 \Wmat(x) &\coloneqq - \sum_{i=1}^N
  \lb \rho_i(x) + \nu_i(\lambda) \rb \lb \log \lb \rho_i(x) + \nu_i(\lambda) \rb - 1 \rb.
\end{align}

\section{Quasimap spaces for projective spaces}
 \label{sc:QSPS}

\subsection{} \label{QmaptoP}

A holomorphic map
$
 u \colon \bP^1 \to \bP^{n-1}
$
of degree $d$
is given by a collection
$\lb u_i(z_1,z_2) \rb_{i=1}^n$
of $n$ homogeneous polynomials of degree $d$
satisfying the following condition:
\begin{align} \label{eq:map_condition}
 \alitem{
There exists no $(z_1,z_2) \in \bA^2 \setminus \{ 0 \}$
such that
$u(z_1,z_2) = 0 \in \bA^n$.
}
\end{align}
Two collections $\lb u_i(z_1,z_2) \rb_{i=1}^n$
and $\lb u'_i(z_1,z_2) \rb_{i=1}^n$ define
the same map if and only if there exists $\alpha \in \bGm$
such that $u_i(z_1,z_2) = \alpha u'_i(z_1,z_2)$
for all $i \in \{ 1, \ldots, n\}$.
It follows that
the space
\begin{align}
 \cM(\bP^{n-1}; d)
  &\coloneqq \lc u \colon \bP^1 \to \bP^{n-1} \relmid \deg u = d \rc
\end{align}
of holomorphic maps of degree $d$
from $\bP^1$ to $\bP^{n-1}$
can be compactified
to the projective space
of dimension $n(d+1)-1$,
whose homogeneous coordinate is given by
the coefficients $\lb a_{ij} \rb_{i,j}$
of the collection $(u_i(z_1,z_2))_{i=1}^n$
of homogeneous polynomials of degree $d$;
\begin{align}
 u_i(z_1,z_2) = \sum_{j=0}^d a_{ij} z_1^j z_2^{d-j}, \qquad
 i=1, \ldots, n.
\end{align}
This compacification is called the \emph{quasimap space}
and denoted by $\Q(\bP^{n-1}; d)$.
An element of the quasimap space
is called a \emph{quasimap}.

\subsection{}

A point $[z_1:z_2] \in \bP^1$ is a \emph{base point}
(or \emph{singularity})
of a quasimap $u$
if $u(z_1,z_2)=0$.
A quasimap is a genuine map
outside of the base locus.
If the degree of the base locus is $d'$,
then a quasimap can be considered as a genuine map
of degree $d-d'$.
However, it is more convenient to think of a quasimap
as a morphism to the quotient stack
$[\bA^n/\bGm]$.
By definition,
a morphism from $\bP^1$ to $[\bA^n/\bGm]$
is a principal $\bGm$-bundle $P$ over $\bP^1$
and a $\bGm$-equivariant morphism $\utilde \colon P \to \bA^n$.
It is a quasimap if the generic point of $P$ is mapped
to the semi-stable locus $\bA^n \setminus \{ 0 \}$.

\subsection{}

Let $x \in H^2(\Q(\bP^{n-1};d); \bZ)$ be the ample generator
of the cohomology ring of $\Q(\bP^{n-1};d) \cong \bP^{n(d+1)-1}$,
so that
\begin{align}
 H^*(\Q(\bP^{n-1};d); \bZ) \cong \bZ[x]/ \lb x^{n(d+1)} \rb.
\end{align}
Given a polynomial $P(x) \in \bC[x]$,
we set
\begin{align}
 \la P(x) \ra_{\bP^{n-1}}
  \coloneqq \sum_{d=0}^\infty q^d \la P(x) \ra_{\bP^{n-1},d}
  \in \bC \db[ q \db],
\end{align}
where
\begin{align}
 \la P(x) \ra_{\bP^{n-1},d}
  \coloneqq \int_{\Q(\bP^{n-1};d)} P(x)
\end{align}
is the integration over the quasimap space.
It follows from
\begin{align}
 \la x^k \ra_{\bP^{n-1},d} =
\begin{cases}
 1 & k = n(d+1)-1, \\
 0 & \text{otherwise}
\end{cases}
\end{align}
that
\begin{align} \label{eq:P-correlator1}
 \la x^k \ra_{\bP^{n-1}} =
\begin{cases}
 q^d & k = n(d+1)-1 \text{ for some } d \in \bZ^{\ge 0}, \\
 0 & \text{otherwise}.
\end{cases}
\end{align}

\subsection{}

If we set $G \coloneqq \bGm$ and $W \coloneqq \bC^n$
with the action
$
 G \times \bGm \ni (\alpha, \beta) \colon (w_1,\ldots,w_n)
  \mapsto (\alpha w_1, \ldots, \alpha w_n),
$
then we have
$
 \Zvec_d(x) = 1
$
and
$
 \Zmat_d(x) = \lb x^{-d-1} \rb^n,
$
so that \pref{eq:correlator} gives
the same result as \pref{eq:P-correlator1}
under the identification
\begin{align} \label{eq:tt'1}
 q = e^{t'}.
\end{align}

\subsection{}

The \emph{small quantum cohomology} of $\bP^{n-1}$ is
the free $\bC \db[ q \db]$-module
\begin{align}
 \QH \lb \bP^{n-1} \rb
  \coloneqq H^* \lb \bP^{n-1}; \bC \db[ q \db] \rb
\end{align}
equipped with multiplication given by
\begin{align} \label{eq:GWPn-1}
 x^i \circ x^j
  \coloneqq \sum_{k=0}^n \sum_{d=0}^\infty q^d
   \la I_{0,3,d} \ra \! (x^i, x^j, x^k)
   \, x^{n-k-1}.
\end{align}
Here
\begin{align}
 \la I_{0,3,d} \ra \! (a, b, c)
  \coloneqq \int_{\ld \cMbar_{0,3} \lb \bP^{n-1};d \rb \rd^\virt}
    \ev_1^* a \cup \ev_2^* b \cup \ev_3^* c
\end{align}
is the 3-point Gromov-Witten invariant.
It is an associative commutative deformation
of the classical cohomology ring;
\begin{align}
 \left. \QH \lb \bP^{n-1} \rb \right/ \lb q \rb
  \cong H^* \lb \bP^{n-1}; \bC \rb.
\end{align}
Since the virtual dimension of the moduli space of stable maps is given by
\begin{align}
 \vdim \cMbar_{g,k}(X;d)
  = (1-g) (\dim X-3) + \la c_1(X),d \ra + k
\end{align}
in general,
one has
\begin{align}
 \vdim \cMbar_{0,3} \lb \bP^{n-1};d \rb
  = nd+n-1.
\end{align}
The 3-point Gromov-Witten invariant
in \pref{eq:GWPn-1}
is non-zero only if
\begin{align} \label{eq:Pn-1-degree}
 \vdim \cMbar_{0,3} \lb \bP^{n-1};d \rb
  = i+j+k.
\end{align}
Since $0 \le i,j,k \le n-1$,
one has \pref{eq:Pn-1-degree}
only if $d=0$, $i+j+k=n-1$ or $d=1$, $i+j+k=2n-1$.
This shows that $x^i \circ x^j = x^{i+j}$ for $i+j \le n-1$.
Since
there is a unique line passing through two points on $\bP^{n-1}$
in general position,
and this line intersects a hyperplane at one point,
one has
$
 x \circ x^{n-1} = q.
$
Hence the ring structure of the quantum cohomology of $\bP^{n-1}$
is given by
\begin{align} \label{eq:QHP}
 \QH \lb \bP^{n-1} \rb \cong \left. \lb \bC \db[ q \db] \rb [x] \right/ \lb x^n-q \rb.
\end{align}
We write the ring homomorphism
$
 \bC[x] \to \QH(\bP^{n-1})
$
sending $x$ to $x$ as
$
 P(x) \mapsto \mathring{P}(x).
$

\begin{theorem} \label{th:P-correlator}
For any $P(x) \in \bC[x]$,
one has
\begin{align} \label{eq:P-correlator}
 \la P(x) \ra_{\bP^{n-1}}
  = \int_{\bP^{n-1}} \mathring{P}(x).
\end{align}
\end{theorem}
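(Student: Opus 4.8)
The plan is to compute both sides of \eqref{eq:P-correlator} explicitly and check they agree, exploiting the fact that both sides are $\bC$-linear in $P(x)$, so it suffices to treat $P(x) = x^k$ for each $k \geq 0$. The left-hand side is already known in closed form: by \eqref{eq:P-correlator1},
\begin{align}
 \la x^k \ra_{\bP^{n-1}} =
 \begin{cases}
  q^d & \text{if } k = n(d+1)-1 \text{ for some } d \in \bZ^{\geq 0}, \\
  0 & \text{otherwise}.
 \end{cases}
\end{align}
So everything reduces to evaluating $\int_{\bP^{n-1}} \mathring{x}^k$ in the quantum cohomology ring $\QH(\bP^{n-1}) \cong (\bC\db[q\db])[x]/(x^n - q)$ from \eqref{eq:QHP}, where the integration means taking the coefficient of the top class $x^{n-1}$ (i.e.\ applying the $\bC\db[q\db]$-linear functional dual to the point class).

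The key step is the reduction in $\QH(\bP^{n-1})$. Writing $k = an + b$ with $0 \leq b \leq n-1$ and $a \geq 0$, the relation $x^n = q$ gives $\mathring{x}^k = q^a\, x^b$ in $\QH(\bP^{n-1})$. Then $\int_{\bP^{n-1}} \mathring{x}^k = q^a \int_{\bP^{n-1}} x^b$, which is $q^a$ when $b = n-1$ and $0$ otherwise. The condition $b = n-1$ with $k = an + (n-1) = n(a+1) - 1$ matches exactly the condition $k = n(d+1)-1$ on the left-hand side, with $d = a$, and the values $q^a = q^d$ agree. This establishes \eqref{eq:P-correlator} for all monomials, hence for all $P(x)$ by linearity.

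The only real content is justifying the ring presentation \eqref{eq:QHP} itself, but that has already been derived in the excerpt from the dimension count \eqref{eq:Pn-1-degree} together with the geometric facts that $x^i \circ x^j = x^{i+j}$ for $i + j \leq n-1$ and $x \circ x^{n-1} = q$; so I may invoke it freely. The main (minor) obstacle is purely bookkeeping: being careful that "integration over $\bP^{n-1}$" of a class in $\QH(\bP^{n-1})$ means extracting the $x^{n-1}$-coefficient after reduction modulo $x^n - q$, and that this is consistent with the $q$-adic expansion so that the identity holds in $\bC\db[q\db]$ rather than just formally. There is no hard analytic or geometric step here — the theorem is essentially a restatement of \eqref{eq:P-correlator1} in quantum-cohomological language, and the proof is a one-line monomial computation once \eqref{eq:QHP} is in hand.
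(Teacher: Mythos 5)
Your proposal is correct and follows exactly the paper's argument: reduce by linearity to monomials $x^k$, then compare \eqref{eq:P-correlator1} with the reduction of $x^{\circ k}$ modulo $x^n - q$ using the presentation \eqref{eq:QHP}. The paper dismisses the monomial check as "obvious"; you have simply written out that one-line computation explicitly.
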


\begin{proof}
Since both sides of \pref{eq:P-correlator} are linear in $P(x) \in \bC[x]$,
it suffices to show
\begin{align}
 \la x^k \ra_{\bP^{n-1}} = \int_{\bP^{n-1}} x^{\circ k}
\end{align}
for any $k \in \bN$,
which is obvious from \pref{eq:P-correlator1}
and \pref{eq:QHP}.
\end{proof}

\pref{th:P-correlator} is equivalent
to the Vafa-Intriligator formula
\cite{MR1093562,MR1138873}:

\begin{corollary}[Vafa-Intriligator formula for projective spaces]
 \label{cr:IV1}
For any $P(x) \in \bC[x]$,
one has
\begin{align} \label{eq:Pn-1_IV}
 \int_{\bP^{n-1}} \mathring{P}(x)
  = \frac{1}{n} \sum_{\lambda^n = q} \frac{P(\lambda)}{\lambda^{n-1}},
\end{align}
where the sum is over $\lambda \in \bC \db[ q^{1/n} \db]$
satisfying $\lambda^n = q$.
\end{corollary}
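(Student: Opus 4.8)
The plan is to exploit $\bC \db[ q \db]$-linearity of both sides of \pref{eq:Pn-1_IV} in $P(x)$ to reduce to the monomials $P(x) = x^{k}$, $k \in \bN$, and then to combine \pref{th:P-correlator} with the explicit values in \pref{eq:P-correlator1}. Conceptually, the identity reflects the fact that $\QH(\bP^{n-1}) \cong \lb \bC \db[ q \db] \rb[x] / \lb x^{n} - q \rb$ is a Frobenius algebra whose trace functional is the integration map $P \mapsto \int_{\bP^{n-1}} \mathring{P}(x)$ of \pref{th:P-correlator}; after the finite base change $\bC \db[ q \db] \hookrightarrow \bC \db[ q^{1/n} \db]$ the element $x^{n} - q$ acquires the $n$ distinct roots $\lambda$ with $\lambda^{n} = q$, the ring splits as a product of $n$ copies of $\bC \db[ q^{1/n} \db]$ indexed by these roots, and the Euler--Jacobi (Lagrange interpolation) residue formula identifies the trace with $P \mapsto \sum_{\lambda^{n} = q} P(\lambda) / g'(\lambda)$ for $g(x) = x^{n} - q$, $g'(x) = n x^{n-1}$, which is precisely the right-hand side of \pref{eq:Pn-1_IV}.

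Carried out concretely, I would first check that $P(x) \mapsto \frac{1}{n} \sum_{\lambda^{n} = q} P(\lambda)/\lambda^{n-1}$ is well defined as a map $\bC[x] \to \bC \db[ q \db]$: the $n$ roots are $q^{1/n}$ times the $n$-th roots of unity, each summand lies in $\bC \db[ q^{1/n} \db]$, and the whole sum is invariant under the Galois action permuting the roots, hence lies in $\bC \db[ q \db]$. Next I would evaluate the right-hand side of \pref{eq:Pn-1_IV} on $P(x) = x^{k}$: writing $\lambda = q^{1/n} \omega$ with $\omega^{n} = 1$,
\[
 \frac{1}{n} \sum_{\lambda^{n} = q} \frac{\lambda^{k}}{\lambda^{n-1}}
  = \frac{q^{(k-n+1)/n}}{n} \sum_{\omega^{n} = 1} \omega^{k-n+1}
  = \begin{cases} q^{d} & k = n(d+1)-1,\ d \in \bZ^{\ge 0}, \\ 0 & \text{otherwise}, \end{cases}
\]
since $\sum_{\omega^{n} = 1} \omega^{m}$ equals $n$ if $n \mid m$ and $0$ otherwise, and $k \ge 0$ forces the surviving case to have $d \ge 0$. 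By \pref{th:P-correlator} the left-hand side of \pref{eq:Pn-1_IV} equals $\la x^{k} \ra_{\bP^{n-1}}$, which by \pref{eq:P-correlator1} is $q^{d}$ when $k = n(d+1)-1$ and $0$ otherwise. The two agree, and linearity in $P$ finishes the argument.

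Since the proof is a short direct computation once \pref{th:P-correlator} is in hand, I do not foresee a genuine obstacle. The only points deserving care are the bookkeeping of the low-degree monomials --- for $0 \le k < n-1$ one has $-n < k-n+1 < 0$, so $n \nmid k-n+1$ and the root sum vanishes, matching $\int_{\bP^{n-1}} x^{\circ k} = \int_{\bP^{n-1}} x^{k} = 0$ --- and the observation that the Galois-symmetric sum genuinely descends to $\bC \db[ q \db]$ rather than merely to $\bC \db[ q^{1/n} \db]$, which is what makes the stated formula meaningful.
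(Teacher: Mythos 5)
Your argument is correct, but it takes a more elementary route than the paper. You reduce by linearity to the monomials $P(x)=x^k$, evaluate the root sum explicitly as a character sum over $n$-th roots of unity (getting $q^d$ exactly when $k=n(d+1)-1$), and match this against the values of $\la x^k \ra_{\bP^{n-1}}$ recorded in \pref{eq:P-correlator1}, with \pref{th:P-correlator} supplying the identification of the left-hand side; your checks that the Galois-symmetric sum lands in $\bC\db[q\db]$ and that the low-degree monomials vanish on both sides are exactly the points that need care, and they are handled correctly. The paper instead works with a general $P$ throughout: it writes each degree-$d$ contribution as $\Res\, P(x)\,dx/x^{n(d+1)}$, sums the geometric series in $q x^{-n}$ inside the residue to get $\Res\, P(x)\,dx/(x^n-q)$, and then moves the residue at $x=0$ to the simple poles at the roots of $x^n=q$, which produces the $1/(n\lambda^{n-1})$ weights from $g'(\lambda)$ for $g(x)=x^n-q$. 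The two computations are equivalent, but the paper's residue-and-geometric-series manipulation is the template that gets reused later (it is invoked verbatim in the Bethe/gauge section, and it is the prototype for the Jeffrey--Kirwan residue formulas for Grassmannians), whereas your monomial-plus-roots-of-unity verification is self-contained and makes the Frobenius-algebra/Euler--Jacobi interpretation of the trace on $\lb\bC\db[q\db]\rb[x]/(x^n-q)$ more transparent.
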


\begin{proof}
Since the integration over the projective space
can be written by residue as
\begin{align} \label{eq:Pr_int}
 \int_{\bP^{r-1}} x^k
  = \delta_{r-1, k}
  = \Res \frac{x^k dx}{x^r},
\end{align}
one has
\begin{align}
 \int_{\bP^{n-1}} \mathring{P}(x)
  &= \la P(x) \ra_{\bP^{n-1}} \\
  &= \sum_{d=0}^\infty q^d \int_{\Q(\bP^{n-1};d)} P(x) \\
  &= \sum_{d=0}^\infty q^d \Res \frac{P(x) dx}{x^{n(d+1)}} \\
  &= \Res \frac{x^{-n} P(x)}{1-q x^{-n}} \\
  &= \Res \frac{P(x)}{x^n-q} \\
  &= \frac{1}{n} \sum_{\lambda^n = q} \frac{P(\lambda)}{\lambda^{n-1}},
\end{align}
and \pref{eq:Pn-1_IV} is proved.
\end{proof}

\subsection{}

The projective space $\bP^{n-1}$ has a natural action of $\GL_n$,
which restricts to the action
of the diagonal maximal torus $H$.
The \emph{equivariant cohomology} is defined as the ordinary cohomology
$
 H^*_H(\bP^{n-1}) \coloneqq
  H^*(\bP^{n-1}_H)
$
of the \emph{Borel construction}
$
 \bP^{n-1}_H \coloneqq \bP^{n-1} \times_H EH,
$
where $EH$ is the product
of $n$ copies of the total space of the tautological bundle
$\cO_{\bP^{\infty}}(-1)$ over $B\bGm = \bP^{\infty}$.
It follows that $\bP^{n-1}_H$ is the projectivization $\bP(\cE)$
of the vector bundle $\cE \coloneqq \bigoplus_{i=1}^n \pi_i^* \cO_{\bP^{\infty}}(-1)$
of rank $n$ over $(\bP^\infty)^n$.
A standard result
on the cohomology of a projective bundle
(see e.g. \cite[page 606]{MR507725})
shows that
$H^*(\bP^{n-1}_H)$
is generated over
$
 H_H^*(\mathrm{pt})
  = H^* \lb (\bP^\infty)^n \rb
  \cong \bC[\lambda_1,\ldots,\lambda_n]
$
by
$
 x \coloneqq - c_1(\cO_{\bP(\cE)}(-1))
$
with one relation
\begin{align} \label{eq:equiv_coh1}
 (-x)^n - c_1(\cE) (-x)^{n-1} + c_2(\cE) (-x)^{n-2} + \cdots + (-1)^n c_n(\cE) = 0.
\end{align}
Since $c_i(\cE) = (-1)^i \sigma_i(\lambda_1, \ldots, \lambda_n)$,
one obtains
\begin{align}
 H_H^*(\bP^{n-1})
  \cong \bC[x,\lambda_1,\ldots,\lambda_n] \left/
   \prod_{i=1}^n (x-\lambda_i) \right. .
\end{align}
The $H$-fixed locus $(\bP^{n-1})^H$ consists of $n$ points
$\{ p_i \}_{i=1}$,
where $p_i$ is the point $[z_1:\cdots:z_n] \in \bP^{n-1}$
with $z_i = 1$ and $z_j = 0$ for $i \ne j$.
Since the tautological bundle $\cO_{\bP(\cE)}(-1)$
restricts to $\pi_i^{*} \cO_{\bP^\infty}(-1)$
on $(p_i)_T = (\bP^\infty)^n$,
one has
\begin{align}
 \iota_i^* x = \lambda_i.
\end{align}
The push-forward
\begin{align}
 \int_{\bP^{n-1}}^H \colon H_H^*(\bP^{n-1})
  \to H_H^*(\mathrm{pt})
  \cong \bC[\lambda_1,\ldots,\lambda_n]
\end{align}
along the natural map
$
 (\bP^{n-1})_H \to (\mathrm{pt})_H \cong BH
$
is called the \emph{equivariant integration}.
The localization theorem \cite{MR721448} shows
\begin{align}
 \int_{\bP^{n-1}}^H P(x)
  &= \sum_{i=1}^n \frac{\iota_i^* P(x)}{\Eul^H(N_{p_i/\bP^{n-1}})} \\
  &= \sum_{i=1}^n \frac{P(\lambda_i)}{\prod_{j \ne i} (\lambda_i - \lambda_j)}
   \nonumber \\
  &= \Res \frac{P(x) dx}{\prod_{i=1}^n (x-\lambda_i)}
   \nonumber
\end{align}
for any $P(x) \in H^*_H(\bP^{n-1})$.

\subsection{}

The quasimap space $\Q(\bP^{n-1};d)$
has a natural action of $H \times \bGm$
given by
\begin{align}
 H \times \bGm \ni (\alpha_1,\ldots,\alpha_n,\beta) \colon
  \lb u_i(z_1,z_2) \rb_{i=1}^n
   \mapsto \lb \alpha_i u_i(z_1,\beta z_2) \rb_{i=1}^n.
\end{align}
The equivariant cohomology of $\Q(\bP^{n-1};d)$
with respect to this torus action
is given by
\begin{align}
 H_{H \times \bGm}^*(\Q(\bP^{n-1};d);\bC) \cong
 \bC[x,\lambda_1,\ldots,\lambda_n,\z]
  \left/ \lb \prod_{i=1}^n \prod_{j=0}^d (x-\lambda_i-j\z) \rb \right. .
\end{align}
The $H \times \bGm$-equivariant integration
\begin{align}
 \la - \ra_{\bP^{n-1},d}^{H \times \bGm}
  \colon H_{H \times \bGm}^*(\Q(\bP^{n-1};d);\bC) \to H^*(B(H \times \bGm);\bC)
\end{align}
is given by
\begin{align} \label{eq:HU-equiv_correlator1}
 \la P(x) \ra_{\bP^{n-1},d}^{H \times \bGm}
  &= \Res \frac{P(x) dx}{\prod_{i=1}^n \prod_{j=0}^d (x-\lambda_i-jz)} \\
  &= \sum_{i=1}^n \sum_{j=0}^d \frac{P(\lambda_i+j\z)}
  {\prod_{(k,l) \ne (i,j)} ((\lambda_i+j\z)-(\lambda_k+l\z))}.
\end{align}
The $H \times \bGm$-equivariant correlator is given by
\begin{align} \label{eq:HU-equiv_correlator}
 \la P(x) \ra_{\bP^{n-1}}^{H \times \bGm}
  &\coloneqq \sum_{d=0}^\infty q^d \la P(x) \ra_{\bP^{n-1},d}^{H \times \bGm}.
\end{align}
The $H$-equivariant correlator
$
 \la P(x) \ra_{\bP^{n-1}}^H
$
and the $\bGm$-equivariant correlator
$
 \la P(x) \ra_{\bP^{n-1}}^{\bGm}
$
are obtained by
setting $\z = 0$
and $\bslambda = (\lambda_1,\ldots,\lambda_n)=0$
respectively.

\subsection{}

The fixed point of the $\bGm$-action on $\Q(\bP^{n-1};d)$
is the disjoint union
\begin{align}
 \Q(\bP^{n-1};d)^{\bGm}
  = \coprod_{i=0}^d \Q(\bP^{n-1};d)^{\bGm}_i
\end{align}
of $d+1$ connected components
\begin{align} \label{eq:P_conn_comp_i}
 \Q(\bP^{n-1};d)^{\bGm}_i
  \coloneqq \lc [a_1 z_1^i z_2^{d-i}, \ldots, a_n z_1^i z_2^{d-i}] \in \Q(\bP^{n-1};d)
   \relmid [a_1,\ldots,a_n] \in \bP^{n-1} \rc.
\end{align}
Each of these connected components is isomorphic to $\bP^{n-1}$,
and the base locus is $i 0 + (d-i) \infty$.
The connected component
$
 \Q(\bP^{n-1};d)^{\bGm}_0
$
will be denoted by $\Qone(\bP^{n-1};d)$.
There is a natural map
$\ev \colon \Qone(\bP^{n-1};d) \to \bP^{n-1}$
called the \emph{evaluation map},
and one has
\begin{align}
 \Q(\bP^{n-1};d)^{\bGm}
  \cong \coprod_{d_1+d_2=d} \Qone(\bP^{n-1};d_1)
   \times_{\bP^{n-1}} \Qone(\bP^{n-1};d_2).
\end{align}
The normal bundle of
$\Qone(\bP^{n-1};d)$
in $\Q(\bP^{n-1};d)$
is given by $\cO_{\bP^{n-1}}(1)^{\oplus nd}$,
whose equivariant Euler class is given by
\begin{align}
 \Eul^{H \times \bGm} \lb N_{\Qone(\bP^{n-1};d) / \Q(\bP^{n-1};d)} \rb
  = \prod_{i=1}^n \prod_{l=1}^d \lb x - \lambda_i + l \z \rb.
\end{align}
The \emph{equivariant $I$-function} is defined by
\begin{align} \label{eq:equiv_I_P}
 I_{\bP^{n-1}}^H(t;\z)
  &\coloneqq e^{t x/z} \sum_{d=0}^\infty e^{dt} I^H_d
\end{align}
where
\begin{align}
 I^H_d(\z)
  &\coloneqq \ev_* \lb \frac{1}
   {\Eul^{H \times \bGm} \lb N_{\Qone(\bP^{n-1};d) / \Q(\bP^{n-1};d)} \rb} \rb \\
  &= \frac{1}
  {\prod_{i=1}^n \prod_{l=1}^d \lb x - \lambda_i + l \z \rb}.
\end{align}
The non-equivariant $I$-function is defined similarly,
and given by setting $\bslambda=0$ in \pref{eq:equiv_I_P};
\begin{align}
 I_{\bP^{n-1}}(t;\z)
  &\coloneqq e^{t x/z} \sum_{d=0}^\infty \frac{e^{dt}}{\prod_{l=1}^d \lb x + l \z \rb^n}.
\end{align}

\subsection{}

Let
$
 \lb \bC \db[ e^t \db] \rb [t]
$
be the polynomial ring in $t$
with the ring $\bC \db[ e^t \db]$
of formal power series in $e^t$ as a coefficient.
The equivariant $I$-function in \pref{eq:equiv_I_P} is an element of
$
 H_H^* \lb \bP^{n-1}; \bC \rb \otimes_{\bC} \lb \bC \db[ e^t \db] \rb [t],
$
and the variable $t$ is related to the variable
$q$ appearing in the correlator by
\begin{align} \label{eq:tt'2}
 q = e^t.
\end{align}
The equivariant $I$-function can also be considered
as a $H_H^*\lb \bP^{n-1}; \bC \rb$-valued analytic function,
which is multi-valued as a function of $q$ and
single-valued as a function of $t = \log q$.

\subsection{}
There is a $\bGm$-equivariant evaluation map
$
 \ev_0 \colon \Q (\bP^{n-1};d) \to \ld \bC^n / \bGm \rd
$
at the point $0 \in \bP^1$.
By abuse of notation,
we also let $x$ denote the $\bGm$-equivariant Euler class
of the line bundle
$
 \ev_0^*(\cO_{[\bC^n/\bGm]}(1)).
$
Here $\cO_{[\bC^n/\bGm]}(1)$ is the line bundle
$[(\bC^n \times \bC) / \bGm]$
on the quotient stack
with weights $((1, \ldots, 1), 1)$.

Let
$
 \iota_i \colon \Q(\bP^{n-1};d)_i^{\bGm} \to \Q(\bP^{n-1};d)
$
be the inclusion of the $i$-th connected component
\pref{eq:P_conn_comp_i}.
Since $\iota_i^*(x) = x+i \z$ (under the identification $\Q(\bP^{n-1};d)_i^{\bGm}  = \bP ^{n-1}$) and
\begin{align}
 \frac{1}{\Eul^{\bGm} \lb N_{\Q(\bP^{n-1};d)_i^{\bGm}/\Q(\bP^{n-1};d)} \rb}
  = I_i(\z) \cup I_{d-i}(-\z),
\end{align}
localization with respect to the $\bGm$-action
shows that
\begin{align} \label{eqn:decom Phi}
 \sum_{d=0}^\infty
  e^{d \tau} \la e^{(t-\tau)x/\z} \ra_{\bP^{n-1},d}^{\bGm}
 &= \sum_{d=0}^\infty e^{d \tau} \sum_{i=0}^d
 \int_{\Q(\bP^{n-1};d)^{\bGm}_i}
  \frac{\iota_i^* \lb e^{(t-\tau)x/\z} \rb}
  {\Eul^{\bGm} \lb N_{\Q(\bP^{n-1};d)_i^{\bGm}/\Q(\bP^{n-1};d)} \rb}  \nonumber  \\
 &= \sum_{d=0}^\infty e^{d \tau} \sum_{i=0}^d
 \int_{\bP^{n-1}}
  e^{(t-\tau)(x+i\z)/\z} \cup
  I_i(\z) \cup I_{d-i}(-\z)  \nonumber \\
 &=
 \sum_{d=0}^\infty \sum_{i=0}^d
 \int_{\bP^{n-1}}
  e^{t x /\z} e^{ti} I_i(\z) \cup
  e^{-\tau x/\z} e^{(d-i)\tau} I_{d-i}(-\z)     \nonumber \\
 &= \int_{\bP^{n-1}} I_{\bP^{n-1}}(t;\z) \cup I_{\bP^{n-1}}(\tau;-\z). 
\end{align}
The factorization of the $H \times \bGm$-equivariant correlator
is proved similarly as
\begin{align*}
 \sum_{d=0}^\infty & e^{d \tau} \la e^{(t-\tau)x/\z} \ra_{\bP^{n-1},d}^{H \times \bGm}
  \label{eq:factorization3} \\
  &= \sum_{d=0}^\infty
   \Res \frac{e^{d \tau} e^{(t-\tau)x/\z} dx}{\prod_{i=1}^n \prod_{l=0}^d (x-\lambda_i-l\z)} \\
  &= \sum_{d=0}^\infty \sum_{m=0}^d \sum_{j=1}^n
   \Res_{x=\lambda_j+m\z} \frac{e^{d \tau} e^{(t-\tau)x/\z} dx}
    {\prod_{i=1}^n \prod_{l=0}^d (x-\lambda_i-l\z)} \\
  &= \sum_{d=0}^\infty \sum_{m=0}^d \sum_{j=1}^n
   \Res_{x=\lambda_j} \frac{e^{d \tau} e^{(t-\tau) x/\z} e^{(t-\tau) m} dx}
    {\prod_{i=1}^n \prod_{l=0}^d (x-\lambda_i-(l-m)\z)} \\
  &= \sum_{d=0}^\infty \sum_{m=0}^d \sum_{j=1}^n
   \Res_{x=\lambda_j}
    \frac{e^{tx/\z} e^{mt}}{\prod_{i=1}^n \prod_{l=1}^m (x-\lambda_i+l\z)}
    \frac{e^{- \tau x/\z} e^{(d-m) \tau}}{\prod_{i=1}^n \prod_{l=1}^{d-m} (x-\lambda_i-l\z)}
    \frac{dx}{\prod_{i=1}^n (x-\lambda_i)} \\
  &= \sum_{d=0}^\infty \sum_{d'=0}^\infty \sum_{j=1}^n
   \Res_{x=\lambda_j}
    \frac{e^{tx/\z} e^{dt}}{\prod_{i=1}^n \prod_{l=1}^d (x-\lambda_i+l\z)}
    \frac{e^{-\tau x/\z} e^{d'\tau}}{\prod_{i=1}^n \prod_{l=1}^{d'} (x-\lambda_i-l\z)}
    \frac{dx}{\prod_{i=1}^n (x-\lambda_i)} \\
  &= \int_{\bP^{n-1}}^H I^H_{\bP^{n-1}}(t;\z) \cup I^H_{\bP^{n-1}}(\tau;-\z).
\end{align*}
This can also be regarded as a purely combinatorial proof.

%

\subsection{}

Let
$
 \ev \colon \cMbar_{0,1}(\bP^{n-1};d) \to \bP^{n-1}
$
be the evaluation map
from the moduli space of stable maps
of genus 0 and degree $d$ with 1 marked point, and
$\psi$ be the first Chern class of the line bundle
over $\cMbar_{0,1}(\bP^{n-1};d)$
whose fiber at a stable map $\varphi \colon (C, x) \to \bP^{n-1}$
is the cotangent line $T_x^* C$ at the marked point.
The \emph{equivariant $J$-function}
\cite{MR1408320}
is a $H^*(\bP^{n-1};\bC)$-valued hypergeometric series
given by
\begin{align}
 J^H_{\bP^{n-1}}(t;\z)
  \coloneqq e^{t x/\z} \sum_{d=0}^\infty e^{dt} J_d
\end{align}
where
\begin{align}
 J_d \coloneqq \ev_* \lb \frac{1}{\z(\z-\psi)} \rb.
\end{align}

\subsection{}

The \emph{graph space}
is defined by
$
 G(\bP^{n-1};d) \coloneqq \cMbar_{0,0}(\bP^{n-1} \times \bP^1;(d,1)).
$
The source of any map
$\varphi \colon C \to \bP^{n-1} \times \bP^1$
in
$
 G(\bP^{n-1};d)
$
has a distinguished irreducible component
$C_1$
which maps isomorphically to $\bP^1$.
Let 
$
 G(\bP^{n-1};d)_0
$
be the open subspace of
$
 G(\bP^{n-1};d)
$
consisting of stable maps
without irreducible components
mapping constantly to $0 \in \bP^1$.
There is a map $\ev_0 \colon G(\bP^{n-1};d)_0 \to \bP^{n-1}$
sending $\varphi \colon C \to \bP^{n-1} \times \bP^1$ to
$
 \pr_1 \circ \varphi \lb \lb \pr_2 \circ \varphi \rb^{-1}(0) \rb.
$
The fixed locus of the natural $\bGm$-action on
$
 G(\bP^{n-1};d)_0
$
can be identified with
$
 \cMbar_{0,1}(\bP^{n-1};d).
$
Since the natural morphism
$
 G(\bP^{n-1};d)_0 \to \Q(\bP^{n-1};d)_0
$
is a $\bGm$-equivariant birational morphism
which commutes with the evaluation maps,
the push-forwards $J_d$ and $I_d$
of 1 by
$
 \ev_0 \colon G(\bP^{n-1};d)_0 \to \bP^{n-1}
$
and
$
 \ev \colon \Q(\bP^{n-1};d)_0 \to \bP^{n-1}
$
are equal,
and hence
$
 I_{\bP^{n-1}}(t;\z) = J_{\bP^{n-1}}(t;\z).
$

\subsection{}

The effective potential \pref{eq:Weff} is given by
\begin{align}
 \Weff(x;t) = t x - \sum_{i=1}^n
  \lb x - \lambda_i \rb \lb \log \lb x - \lambda_i \rb - 1 \rb.
\end{align}
One can easily see
\begin{align}
 \frac{\partial \Weff}{\partial x}
  &= t - \sum_{i=1}^n \log(x-\lambda_i), \\
 e^{\partial_x \Weff}
  &= q \prod_{i=1}^n (x-\lambda_i)^{-1},
\end{align}
so that
\begin{align}
 \la P(x) \ra_{\bP^{n-1}}^H
 &= \Res \frac{P(x) dx}{\prod_{i=1}^n (x-\lambda_i)
  (1-e^{\partial_x \Weff})}.
\end{align}
Note that the equation
\begin{align}
 e^{\partial_x \Weff} = 1
\end{align}
gives the relation
\begin{align}
 \prod_{i=1}^n (x-\lambda_i) = q
\end{align}
in the equivariant quantum cohomology of $\bP^{n-1}$.

\section{Projective complete intersections}
 \label{sc:PCI}

\subsection{}

Let
$
 f_1(w_1, \ldots, w_n), \ldots, f_r(w_1, \ldots, w_n)
  \in \bC[w_1, \ldots, w_n]
$
be homogeneous polynomials
of degrees $l_1, \ldots, l_r$
satisfying the Calabi--Yau condition
\begin{align}
 l_1 + \cdots + l_r = n.
\end{align}
Assume that $f_1, \ldots, f_r$ are sufficiently general
so that
\begin{align}
 Y \coloneqq \lc [w_1,\cdots,w_n] \in \bP^{n-1} \relmid
  f_1(w_1,\ldots,w_n)=\cdots=f_r(w_1,\ldots,w_n)=0 \rc
\end{align}
is a smooth complete intersection of dimension $n-r-1$,
whose Poincar\'e dual is
\begin{align}
 v \coloneqq \prod_{i=1}^r (l_i x).
\end{align}
Define the quasimap space
$\Q(Y;d)$
as the subset of $\Q(\bP^{n-1};d)$
consisting of $[\varphi_1(z_1,z_2),\ldots,\varphi_n(z_1,z_2)]$
satisfying
\begin{align}
\alitem{$f_i(\varphi_1(z_1,z_2),\ldots,\varphi_n(z_1,z_2))=0 \in \bC[z_1,z_2]$
for any $i \in \{1,\ldots,r\}$.}
\end{align}
Since $f_i(\varphi_1(z_1,z_2),\ldots,\varphi_n(z_1,z_2)) \in \bC[z_1,z_2]$
is a homogeneous polynomial of degree $d l_i$ in $z_1$ and $z_2$,
it contains $d l_i + 1$ terms,
each of which is a homogeneous polynomial of degree $l_i$ in $(a_{kl})_{k,l}$.
With this in mind,
the \emph{Morrison-Plesser class}
is defined by
\begin{align}
 \Phi(Y;d)
  \coloneqq \prod_{i=1}^r (l_i x)^{l_i d}
  \in H^*(\Q(\bP^{n-1};d);\bZ),
\end{align}
so that $\Phi(Y;d) \cup v$ is the Poincar\'e dual of
$
 [\Q(Y;d)]^\virt \in H_*(\Q(\bP^{n-1};d);\bZ).
$
If we set
\begin{align}
 \la P(x) \ra_{Y,d}
  \coloneqq \int_{\Q(\bP^{n-1};d)} P(x) \cup \Phi(Y;d) \cup v
\end{align}
and
\begin{align}
 \la P(x) \ra_Y
  \coloneqq \sum_{d=0}^\infty q^d \la P(x) \ra_{Y,d}
\end{align}
for $P(x) \in \bC[x]$,
then we have
\begin{align}
 \la x^{n-r-1} \ra_Y
  &= \sum_{d=0}^\infty q^d \Res \frac{x^{n-r-1} \Phi(Y,d) v dx}{x^{n(d+1)}}
   \label{eq:Yuk_P_2} \\
  &= \sum_{d=0}^\infty q^d \Res \frac{x^{n-r-1} \prod_{i=1}^r (l_i x)^{l_i d + 1} dx}{x^{n(d+1)}} \\
  &= \sum_{d=0}^\infty q^d \prod_{i=1}^r (l_i)^{l_i d + 1} \\
  &= \frac{\prod_{i=1}^r l_i}{1 - q \prod_{i=1}^r (l_i)^{l_i}} . 
   \label{eq:Yuk_P_1}
\end{align}

\subsection{}

The gauged linear sigma model for $Y$
is obtained from the gauged linear sigma model for $\bP^{n-1}$
by adding $r$ fields
of $G=\bGm$-charge $- l_1, \ldots, - l_r$
and R-charge 2.
One has
$
 \Zvec_d(x) = 1
$
and
$
 \Zmat_d(x) = \lb x^{-d-1} \rb^n \cdot
  \prod_{i=1}^r \lb - l_i x \rb^{l_i d+1}
$
in this case,
so that
\pref{eq:correlator} gives
\begin{align}
 \sum_{d=0}^\infty
  e^{t' d}
  \Res \lb x^{-d-1} \rb^n \prod_{i=1}^r \lb -l_i x \rb^{l_i d + 1} x^{n-r-1}
  &= \sum_{d=0}^\infty e^{t' d} \prod_{i=1}^r \lb -l_i \rb^{l_i d + 1} \\
  &= \sum_{d=0}^\infty \lb -1 \rb^{\sum_{i=1}^r l_i d} e^{t' d}
   \prod_{i=1}^r \lb l_i \rb^{l_i d + 1} \\
  &= \sum_{d=0}^\infty \lb \lb -1 \rb^{n } e^{t'} \rb^d
   \prod_{i=1}^r \lb l_i \rb^{l_i d + 1},
\end{align}
which coincides with \pref{eq:Yuk_P_2}
under the identification
\begin{align} \label{eq:tt'4}
 q = (-1)^{n} e^{t'}.
\end{align}

\subsection{}

The mirror $\Yv$ of $Y$ is a compactification of
a complete intersection in $\bC^n$
defined by
\begin{align}
 \fv_1 &\coloneqq 1 - (a_1 \yv_1 + \cdots + a_{l_1} \yv_{l_1}), \\
 \fv_2 &\coloneqq 1 - (a_{l_1+1} \yv_{l_1+1} + \cdots + a_{l_1+l_2} \yv_{l_1 + l_2}), \\
 &\hspace{2mm} \vdots\\
 \fv_r &\coloneqq 1 - (a_{l_1+\cdots+l_{i-1}+1} \yv_{l_1+\cdots+l_{i-1}+1} + \cdots + a_n \yv_n), \\
 \fv_0 &\coloneqq \yv_1 \cdots \yv_n - 1.
\end{align}
The complex structure of $\Yv$ depends
not on the individual $a_i$ but only on $\alpha = a_1 \cdots a_n$.
The \emph{Yukawa (n-2)-point function} is defined by
\begin{align}
 \cY(\alpha)
  \coloneqq 
    \frac{(-1)^{(n-1)(n-2)/2}}{\lb 2 \pi \sqrt{-1} \rb^{n-1}}
   \int_{\Yv} \Omega \wedge \lb \alpha \frac{\partial}{\partial \alpha} \rb^{n-2} \Omega,
\end{align}
where
\begin{align}
 \Omega \coloneqq
  \Res \lb \frac{d \yv_1 \wedge \cdots \wedge d \yv_n}{\fv_0 \fv_1 \cdots \fv_r} \rb
\end{align}
is the holomorphic volume form on $\Yv$.
The computation in \cite[Proposition 5.1.2]{MR1328251} shows
\begin{align}
 \cY(\alpha)
  = \frac{\prod_{i=1}^r l_i}{1 - \alpha \prod_{i=1}^r (l_i)^{l_i}},
\end{align}
which coincides with \pref{eq:Yuk_P_1}
under the identification $q = \alpha$ of variables;
\begin{align} \label{eq:TRMS_P}
 \cY(\alpha) = \left. \la x^{n-r-1} \ra_Y \right|_{q = \alpha}.
\end{align}
A generalization of \pref{eq:TRMS_P} 
to toric complete intersections
is \emph{toric residue mirror symmetry}
conjectured in \cite{MR1988969,MR2019144}
and proved in \cite{MR2104791,MR2099774,MR2147350,MR2218757}.

\section{Concave bundles on projective spaces}
 \label{sc:concave}

\subsection{}

Let
$
l_1,l_2,\cdots,l_r
$
be positive integers and
\begin{align}
 Y \coloneqq \cSpec_{\bP^{n-1}} \lb \cSym^*  \cE^\vee \rb
\end{align}
be the total space of the vector bundle
associated with the locally free sheaf
\begin{align}
 \cE \coloneqq \cO_{\bP^{n-1}}(-l_1)
  \oplus \cdots \oplus \cO_{\bP^{n-1}}(-l_r)
\end{align}
on $\bP^{n-1}$.
Since any holomorphic map
from $\bP^1$ to $Y$
of positive degree $d$
factors through the zero-section
$\bP^{n-1} \to Y$,
we define the quasimap space to $Y$ as
\begin{align}
 \Q(Y;d) \coloneqq \Q(\bP^{n-1};d).
\end{align}


\subsection{}

To equip $\Q(Y;d)$ with a natural obstruction theory,
we identify $\Q(Y;d)$
with an open substack of the mapping stack
$
 \Map(\bP^1, \cY)
$
to the quotient stack
\begin{align}
 \cY \coloneqq \ld \lb \bA^n \times \bA^r \rb / \bGm \rd
\end{align}
of $\bA^n \times \bA^r$
by the $\bGm$-action given by
\begin{align}
 \bGm \ni \alpha \colon (x_1, \ldots, x_n, z_1, \ldots, z_r)
  \mapsto (\alpha x_1, \ldots, \alpha x_n, \alpha^{-l_1} z_1, \ldots, \alpha^{-l_r} z_r).
\end{align}
A morphism
$
 \bP^1 \to \cY
$
consists of a line bundle $\cL$ on $\bP^1$
and sections
\begin{align}
 \lb (\varphi_i)_{i=1}^n, (\psi_j)_{j=1}^r \rb
  \in \lb (H^0(\cL))^n \times \prod_{j=1}^r H^0 \lb \cL^{\otimes (-l_j)} \rb \rb,
\end{align}
whose degree is defined as the degree of $\cL$.

\subsection{}

Recall from \cite[Definition 4.4]{MR1437495} that
an \emph{obstruction theory}
for a Deligne--Mumford stack $\cX$ is a morphism
$
 \phi \colon E \to L_\cX
$
from an object $E$
of the derived category of quasicoherent sheaves on $\cX$
satisfying
\begin{enumerate}
 \item
$h^i(E) \cong 0$ for $i > 0$, and
 \item
$h^i(E)$ is coherent for $i = 0, -1$
\end{enumerate}
to the cotangent complex $L_\cX$
such that
\begin{enumerate}
 \item
$h^0(\phi)$ is an isomorphism, and
 \item
$h^{-1}(\phi)$ is an epimorphism.
\end{enumerate}
It is said to be \emph{perfect}
if $E$ is  locally  isomorphic
to a two-term complex of locally free sheaves  of finite rank
\cite[Definition 5.1]{MR1437495}.

\subsection{}

A perfect obstruction theory produces
the \emph{virtual fundamental cycle}
$\ld \cX \rd^\virt$
in the Chow group
$A_{\vdim \cX}(\cX)$
of degree
\begin{align}
 \vdim \cX = \rank h^0(E) - \rank h^{-1}(E).
\end{align}
When $\cX$ is a smooth scheme,
then the cotangent complex $L_\cX$ is isomorphic
to the sheaf $\Omega_\cX$ of K\"ahler differentials,
and the virtual fundamental cycle
is the Euler class of $h^{-1}(E)$.

\subsection{}

The derived mapping stack
$\RMap(\cS,\cT)$
from a proper scheme $\cS$
to a derived Artin stack $\cT$
is a derived Artin stack
(see e.g.~\cite[Corollary 3.3]{MR3285853})
whose tangent complex is given by
\begin{align}
 T_{\RMap(\cS, \cT)} \cong \bR \pi_* \lb \bL \! \ev^* T_{\cT} \rb,
\end{align}
where
\begin{align}
 \pi \colon \RMap(\cS, \cT) \times \cS \to \RMap(\cS, \cT)
\end{align}
is the first projection and
\begin{align} \label{eq:ev1}
 \ev \colon \RMap(\cS, \cT) \times \cS \to \cT
\end{align}
is the evaluation morphism.
It is a derived thickening
of the mapping stack
$
 \Map(\cS, \cT),
$
and the pull-back
\begin{align}
 j^* \colon j^* L_{\RMap \lb \cS, \cT \rb}
  \to L_{\Map \lb \cS, \cT \rb}
\end{align}
by the structure morphism
\begin{align}
 j \colon \Map \lb \cS, \cT \rb
  \to \RMap \lb \cS, \cT \rb
\end{align}
gives an obstruction theory on $\Map \lb \cS, \cT \rb$.

\subsection{}

The restriction of the natural obstruction theory for
$
 \Map \lb \bP^1, \cY \rb
$
to the open substack
$
 \Q(Y;d)
$
gives an obstruction theory for $\Q(Y;d)$
with
$
 E = \left. j^* L_{\RMap(\bP^1,\cY)} \right|_{\Q(Y;d)}
$
and
$
 \phi =  j^*|_{\Q(Y;d)}.
$

\subsection{}

Since $\Pic \lb \bA^n \times \bA^r \rb$ is trivial,
the Picard group
$
 \Pic \cY \cong \Pic^{\bGm} \lb \bA^n \times \bA^r \rb
$
can be identified
with the group of characters of $\bGm$,
which is non-canonically isomorphic to $\bZ$.
We fix an isomorphism in such a way that
$\bigoplus_{a=0}^\infty H^0(\cO_\cY(a))$
is the coordinate ring of $\bA^n$,
where $\cO_\cY(a)$ is the line bundle
associated with $a \in \bZ \cong \Pic \cY$.
Since $\cY$ is the quotient stack of $\bA^n \times \bA^r$
by the action of $\bGm$,
the tangent complex $T_\cY$ satisfies
\begin{align}
 \varpi^* T_\cY
  &\cong \Cone \lb \cO_{\bA^n \times \bA^r} \otimes \Lie(\bGm) \to T_{\bA^n \times \bA^r} \rb
\end{align}
where
$
 \varpi \colon \bA^n \times \bA^r \to \cY
$
is the quotient morphism.
This in turn implies that
\begin{align}
 T_\cY &\cong \Cone \lb \cO_\cY \to \cO_\cY(1)^{\oplus n} \oplus \bigoplus_{i=1}^r \cO_\cY(- l_i) \rb.
\end{align}

\subsection{}

We write the restriction of the evaluation morphism
$
 \Map(\bP^1, \cY) \times \bP^1 \to \cY
$
to the open substack
$
 \Q(Y;d) \subset \Map(\bP^1, \cY)
$
as
\begin{align}
 \ev \colon \Q(Y;d) \times \bP^1 \to \cY
\end{align}
again by abuse of notation.
We have
\begin{align}
 \ev^* \cO_\cY(1) \cong \cO_{\Q(Y;d)}(1) \boxtimes \cO_{\bP^1}(d)
\end{align}
essentially by definition,
where $\cO_{\Q(Y;d)}(1)$ is the ample generator of
the Picard group of $\Q(Y;d) \cong \bP^{n(d+1)-1}$.
The dual of the natural obstruction theory is given by
\begin{align}
 \phi^\vee \colon T_{\Q(Y;d)} \to E^\vee \coloneqq \bR \pi_* \ev^* T_\cY.
\end{align}
Note that (the inverse of) the isomorphism $h^0(\phi^\vee)$ from $T_{\Q(Y;d)}$ to
\begin{align}
 h^0(E^\vee)
  &\cong R^0 \pi_* \Cone \lb \cO_{\Q(Y;d) \times \bP^1}
   \to \lb \cO_{\Q(Y;d)}(1) \boxtimes \cO_{\bP^1}(d) \rb^{\oplus n} \rb \\
  &\cong \Cone \lb \cO_{\Q(Y;d)} \to \cO_{\Q(Y;d)}(1)^{\oplus n(d+1)} \rb
\end{align}
gives the Euler sequence
\begin{align}
 0 \to \cO_{\Q(Y;d)} \to \cO_{\Q(Y;d)}(1)^{\oplus n(d+1)} \to T_{\cQ(Y;d)} \to 0
\end{align}
on $\Q(Y;d) \cong \bP^{n(d+1)-1}$.
One has
\begin{align}
 h^1(E^\vee)
  &\cong R^1 \pi_* \lb \bigoplus_{i=1}^r \cO_{\Q(Y;d)}(-l_i) \boxtimes \cO_{\bP^1}(-l_id) \rb \\
  &\cong \bigoplus_{i=1}^r \cO_{\Q(Y;d)}(-l_i) \otimes H^1 \lb \cO_{\bP^1}(-l_id) \rb \\
  &\cong \bigoplus_{i=1}^r \cO_{\Q(Y;d)}(-l_i)^{\oplus (l_i d - 1)}
\end{align}
and $h^i(E^\vee) \cong 0$ for $i \ne 0, 1$,
so that this obstruction theory is perfect.
By 
\cite[Proposition 5.6]{MR1437495},
the resulting virtual fundamental class is given by
\begin{align}
 \ld \Q(Y;d) \rd^\virt
  = \ld \Q(Y;d) \rd \cap \Eul \lb h^1(E^\vee) \rb
  = \ld \Q(Y;d) \rd \cap \prod_{i=1}^r (-l_i x)^{l_i d -1}.
\end{align}

\subsection{}

When the degree is zero,
the quasimap space
$\Q(Y; 0)$
is naturally isomorphic to $Y$
equipped with the trivial perfect obstruction theory,
so that
\begin{align}
 \ld \Q(Y;0) \rd^\virt
  = [Y].
\end{align}

\subsection{}

For any $P(x) \in \bC[x]$,
we define
\begin{align}
 \la P(x) \ra_{Y,d}
  \coloneqq \int_{[\Q(Y;d)]^\virt} P(x)
\end{align}
and
\begin{align}
 \la P(x) \ra_{Y}
  \coloneqq \sum_{d=0}^\infty q^d \la P(x) \ra_{Y,d}. 
\end{align}
It follows that
\begin{align}
 \la P(x) \ra_Y
  &= \sum_{d=0}^\infty q^k
   \int_{\bP^{n(d+1)-1}} P(x) \prod_{i=1}^r (-l_i x)^{l_i d -1} \\
  &= \sum_{d=0}^\infty q^k
  \Res \frac{P(x) \prod_{i=1}^r (-l_i x)^{l_i d}}{x^{n(d+1)} \prod_{i=1}^r (-l_i x) }.
   \label{eq:concave_cor1}
\end{align}

\subsection{}

The gauged linear sigma model for $Y$
is obtained from the gauged linear sigma model for $\bP^{n-1}$
by adding $r$ fields
of $G=\bGm$-charge $- l_1, \ldots, - l_r$
and R-charge 0.
One has
$
 \Zvec_d(x) = 1
$
and
$
 \Zmat_d(x) = \lb x^{-d-1} \rb^n \cdot
  \prod_{i=1}^r \lb - l_i x \rb^{l_i d-1}
$
in this case,
so that
\pref{eq:correlator} gives
\begin{align}
 \cor{P(x)}
 &= \sum_{d=0}^\infty
  e^{t' d}
  \Res \lb x^{-d-1} \rb^n \prod_{i=1}^r \lb -l_i x \rb^{l_i d - 1} P(x),
\end{align}
which coincides with \pref{eq:concave_cor1}
under the identification
\begin{align} \label{eq:tt'5}
 q = e^{t'}.
\end{align}

\subsection{}

If $(l_1, \ldots, l_r)$ satisfies the Calabi--Yau condition
\begin{align}
 l_1 + \cdots + l_r = n,
\end{align}
then \pref{eq:concave_cor1} gives
\begin{align} \label{eq:concave1}
 \la x^k \ra_Y
  =
\begin{cases}
 \dfrac{1}{\lb \prod_{i=1}^r (- l_i) \rb \lb 1 - q \prod_{i=1}^r (- l_i)^{l_i} \rb} & k = n + r, \\[4mm]
 0 & \text{otherwise},
\end{cases}
\end{align}
which matches the Yukawa coupling of the mirror
(see e.g. \cite[Example 6.15]{MR2821394}).

\section{Classical mirror symmetry
for toric hypersurfaces}
 \label{sc:CMSTH}

\subsection{}

Let
$
 \bsN \coloneqq \bZ^n
$
be a free abelian group of rank $n$ and
$
 \bsM
  \coloneqq \bsNv
  \coloneqq \Hom(\bsN, \bZ)
$
be the dual group.
Let further $(\Delta, \Deltav)$ be a polar dual pair
of reflexive polytopes in $\bsM$ and $\bsN$.

\subsection{}

Recall that the \emph{fan polytope}
of a fan is defined as the convex hull
of primitive generators of one-dimensional cones.
Let $(\Sigma, \Sigmav)$ be a pair of smooth projective fans
whose fan polytopes are $\Deltav$ and $\Delta$.
The associated toric varieties will be denoted by
$
 X \coloneqq X_\Sigma
$
and
$
 \Xv \coloneqq X_{\Sigmav}.
$

\subsection{}

The set of primitive generators
of one-dimensional cones of the fan $\Sigma$
will be denoted by
\begin{align}
 B \coloneqq \lc \bb_1, \ldots, \bb_m \rc
  \subset \bsN.
\end{align}
Assume that $B$ generates $\bsN$.
One has the \emph{fan sequence}
\begin{align}
 0 \to \bsL \to \bZ^m \xto{\bb} \bsN \to 0
\end{align}
and the \emph{divisor sequence}
\begin{align}
 0 \to \bsM \xto{\bb^\vee} \bZ^m  \xto{} \bsLv \to 0,
\end{align}
where $\bb$ sends the $i$th coordinate vector $e_i \in \bZ^m$
to $\bsb_i$. Recall that
\begin{align}\label{EffCurve}
 \bsLv \cong \Pic(X) \cong H^2(X; \bZ), \ \ \mathrm{Eff}(X) \subset \bsL \subset \mathbb{Z} ^m,
\end{align}
where  $\mathrm{Eff}(X)$  denotes the semigroup of the effective curves (see \cite[\S 3]{MR1988969}).
We write the group ring of $\bsM$ as $\bC[\bsM]$
and define
$
 \bT
  \coloneqq \bsN_{\bGm}
  \coloneqq \Spec \bC[\bsM].
$
We also set
$
 \bTv
  \coloneqq \Spec \bC[\bsN]
$,
$
 \bLv \coloneqq \Spec \bC[\bsL],
$
and
$
 \bL \coloneqq \Spec \bC [ \bsLv ].
$
The fan sequence induces the exact sequences
\begin{align} \label{eq:fan2}
 1 \to \bL \xto{\bchi} (\bGm)^m \to \bT \to 1
\end{align}
and
\begin{align}
 1 \to \bTv \to (\bGm)^m \to \bLv \to 1
\end{align}
of algebraic tori.
We write the $i$-th components of the map
$
 \bchi \colon \bL \to (\bGm)^m
$
in \pref{eq:fan2}
as $\chi_i$, and
the affine line $\bA^1$
equipped with the action of $\bL$ through $\chi_i$
as $\bA_i$.
Then one has
\begin{align} \label{eq:toric_GIT}
 X \cong \lb \prod_{i=1}^m \bA_i \rb \GIT_\theta \, \bL
\end{align}
for a suitable choice of a character
$
 \theta \in \bsLv \cong \Hom(\bL, \bGm).
$
The right-hand side of \eqref{eq:toric_GIT} denotes the GIT quotient with respect to the linearization determined by $\theta$. 

\subsection{}

We define a graded ring
$
 S_\Delta \coloneqq \bigoplus_{k=0}^\infty S_\Delta^k
$
by
\begin{align}
 S_\Delta^k
  \coloneqq \bigoplus_{\bm \in \bsM \cap (k \Delta)} \bC \cdot y_0^k \by^\bm,
\end{align}
which is a subalgebra
of the semigroup ring
\begin{align}
 \bC[\bN \times \bsM]
  = \bC[y_0, \by^{\pm 1}]
  \coloneqq \bC[y_0, y_1^{\pm 1}, \ldots, y_n^{\pm 1}]
\end{align}
of $\bN \times \bsM$.
It is the anti-canonical ring of $X$,
so that one has $X \cong \Proj S_\Delta$
if and only if $X$ is Fano.
The ring $S_\Delta$ is Cohen-Macaulay
with the dualizing module
$
 I_\Delta \coloneqq \bigoplus_{k=0}^\infty I_\Delta^k
$
given by
\begin{align}
 I_\Delta^k
  \coloneqq \bigoplus_{\bm \in \bsM \cap \Int(k \Delta)} \bC \cdot y_0^k \by^\bm,
\end{align}
where $\Int(k \Delta)$ is the interior of $k \Delta$.

\subsection{}

For $\balpha = (\alpha_1, \ldots, \alpha_m) \in (\bGm)^m$ (this $(\bGm)^m$ can be naturally considered 
as the dual torus of the big torus of $X_\Sigma$),
we define an element of the group ring $\bC[\bsN]$
by
\begin{align}
 \Wv_\balpha(\byv)
  \coloneqq \sum_{i=1}^m \alpha_i \byv^{\bsb_i}
  \in \bC[\bsN].
\end{align}
An element
$
 \fv \in \bC[\bsN]
$
is said to be \emph{$\Deltav$-regular}
if
\begin{align} \label{eq:Fv}
 \Fv \coloneqq (\Fv_0, \Fv_1, \ldots, \Fv_n)
  \coloneqq \lb
   \yv_0 \fv, \yv_0 \yv_1 \partial_{\yv_1} \fv, \ldots,
   \yv_0 \yv_n \partial_{\yv_n} \fv
  \rb
\end{align}
is a regular sequence in $S_\Deltav$.
We write
\begin{align}
 \lb (\bGm)^m \rb^\reg \coloneqq
  \lc \balpha \in (\bGm)^m \relmid
   \fv_\bsalpha \coloneqq 1 - \Wv_\balpha(\byv) \text{ is $\Deltav$-regular} \rc.
\end{align}

\subsection{}

Let
$
 \widetilde{\varphiv} : \widetilde{\fYv} \to \lb (\bGm)^m \rb^\reg
$
be the second projection from
\begin{align}
 \widetilde{\fYv} = \lc (\byv, \balpha) \in \bTv \times \lb (\bGm)^m \rb^\reg
   \mid \Wv_\balpha(\byv) = 1 \rc.
\end{align}
Assume that $X$ is Fano.
Any fiber
$
 \Yv_\balpha \coloneqq \widetilde{\varphiv}^{-1} (\balpha)
$
is an uncompactified mirror
of a general anti-canonical hypersurface $Y \subset X$.
The closure of $\Yv_\balpha$ in $\Xv$
is a smooth anti-canonical Calabi--Yau hypersurface,
which is the compact mirror of $Y$.
The quotient of the family
$
 \widetilde{\varphiv} \colon \widetilde{\fYv} \to \lb (\bGm)^m \rb^\reg
$
by the free $\bTv$-action
\begin{align}
 \bTv \ni \byv \colon \lb \byv', (\alpha_1, \ldots, \alpha_m) \rb
  \mapsto \lb \byv^{-1} \byv',
   \lb \byv^{\bsb_1} \alpha_1, \ldots, \byv^{\bsb_m} \alpha_m \rb \rb
\end{align}
will be denoted by
$
 \varphiv \colon \fYv \to \bLv^\reg,
$
where
$
 \fYv \coloneqq \widetilde{\fYv} / \bTv
$
and
$
 \bLv^\reg \coloneqq \lb (\bGm)^m \rb^\reg / \bTv.
$

\subsection{}

Choose an integral basis $\bp_1, \ldots, \bp_r$ of $\bsLv \cong \Pic X$
such that each $\bp_i$ is nef.
This gives the corresponding coordinate
$\bq = (q_1, \ldots, q_r)$ on $\bLv$.
Let $\Uv' \subset \bLv^\reg$ be a neighborhood
of $q_1 = \cdots = q_r = 0$,
and $\Uv$ be the universal cover of $\Uv'$.

\subsection{}

We write the image of the Poincar\'e residue as
\begin{align}
 H^{n-1}_\res(\Yv_\alpha)
  \coloneqq \Image \lb
   \Res \colon H^0 \lb \Xv, \Omega_\Xv^n \lb * \Yv_\alpha \rb \rb
    \to H^{n-1} \lb \Yv_\alpha \rb
  \rb.
\end{align}
Let $\HB$ be the pull-back to $\Uv$
of the local system
$
 \gr_{n-1}^W \! R^{n-1} \varphiv_! \, \bC_\fYv
$
on $\Uv'$,
and $\HBres$ be the sub-system
with stalks $H^{n-1}_\res(\Yv_\alpha)$.
Here $\gr_{n-1}^W$ is the weight $n-1$ piece
of Deligne's mixed Hodge structure.
The \emph{residual B-model VHS}
$
 (\cHB, \nablaB, \FB^\bullet, \QB)
$
on $\Uv$ consists of the locally free sheaf
$\cHB \coloneqq \HBres \otimes_\bC \cO_\Uv$,
the Gauss--Manin connection $\nabla_B$,
the Hodge filtration $\scrF_B^\bullet$, and
the polarization
$
 Q_B \colon \cHB \otimes_{\cO_{\Uv}} \cHB \to \cO_{\Uv}
$
given by
\begin{align}
 \QB(\omega_1, \omega_2)
  \coloneqq (-1)^{(n-1)(n-2)/2} \int_{\Yv_{\alpha}} \omega_1 \cup \omega_2.
\end{align}

\subsection{}

On the A-model side, let
\begin{align}
 H^\bullet_\amb(Y; \bC)
  \coloneqq \Image(\iota^* \colon H^\bullet(X; \bC) \to H^\bullet(Y; \bC))
\end{align}
be the subspace of $H^\bullet(Y; \bC)$
coming from the cohomology classes of the ambient toric variety,
and set
\begin{align}
 U \coloneqq \lc \btau = \bsbeta + \sqrt{-1} \bsomega \in H^2_\amb(Y; \bC)
  \relmid
   \pair{\bsomega}{\bd} \gg 0 \text{ for any non-zero }
   \bd \in \Eff(Y) \rc .
\end{align}
This open subset $U$ is considered
as a neighborhood of the large radius limit point.
Let $(\tau_i)_{i=1}^r$ be the coordinate on $H^2_\amb(Y; \bC)$
dual to the basis $\{ \bp_i \}_{i=1}^r$
so that
$
 \btau = \sum_{i=1}^r \tau_i \bp_i.
$

\subsection{}

The \emph{ambient A-model VHS}
$(\cHA, \nablaA, \FA^\bullet, \QA)$
consists
(\cite[Definition 6.2]{MR3112512},
cf. also \cite[Section 8.5]{MR1677117})
of the locally free sheaf
$
 \cHA = H_\amb^\bullet(Y;\bC) \otimes_\bC \cO_U,
$
the connection
\begin{align}
 \nablaA = d + \sum_{i=1}^r (\bp_i \circ_\btau) \, d \tau_i
  \colon \cHA \to \cHA \otimes \Omega_{U}^1,
\end{align}
the Hodge filtration
\begin{align}
 \FA^p \coloneqq H_\amb^{\le 2(n-1-p)}(Y;\bC) \otimes_\bC \cO_U,
\end{align}
and the pairing
\begin{align}
 Q_A \colon \cHA \otimes_{\cO_U} \cHA \to \cO_U, \qquad
(\alpha, \beta) \mapsto (2\pi\sqrt{-1})^{n-1}  \int _Y (-1)^{\deg \alpha/2} \alpha \cup \beta,
\end{align}
which is $(-1)^{n-1}$-symmetric and $\nablaA$-flat.

\subsection{}

Let $\bu_i \in H^2_\amb(Y; \bZ)$ be the first Chern class
of the line bundle on $Y$
corresponding to the one-dimensional cone
$\bR_{\ge 0} \cdot \bsb_i \in \Sigma$ and
$\bsv = \bu_1 + \cdots + \bu_m$
be the restriction of the anti-canonical class of $X$.
Denote $\bt := \sum_{i=1}^r t_i\bp_i $.
\emph{Givental's $I$-function} is defined as the series
\begin{align} \label{eq:Givental_I}
 I_Y(\bt;\z) = e^{\bt / \z}
  \sum_{\bd \in \Eff(X)} e^{\inner{\bd}{\bt}} \,
  \frac{
   \prod_{k=-\infty}^{\la \bd, \bsv \ra} (\bsv + k \z)
   \prod_{j=1}^m \prod_{k=-\infty}^0
    (\bu_j + k \z)}
  {\prod_{k=-\infty}^0
    (\bsv + k \z)
   \prod_{j=1}^m \prod_{k=-\infty}^{\la \bd, \bu_j \ra}
    (\bu_j + k \z)},
\end{align}
which is a multi-valued map from $\Uv'$
(or a single-valued map from $\Uv$)
to the classical cohomology group $H^\bullet_\amb(Y; \bC[\z^{-1}])$.
The \emph{$J$-function} is defined by
\begin{align}
 J_Y(\btau; \z) = L_Y(\btau, \z)^{-1}(1),
\end{align}
where $L_Y(\btau,\z)$ is the fundamental solution
of the quantum differential equation
defined explicitly by using the Gromov--Witten invariants
as in \cite[Equation (2.3)]{MR3112512}
with $\bf c$ set to $1$.
If we write
\begin{align}
 I_Y(\bt;\z) = F(\bt) \bsone + \frac{G(\bt)}{\z} + O(\z^{-2}),
\end{align}
then Givental's mirror theorem
\cite{MR1653024}
states that
\begin{align} \label{eq:Givental_MS}
 I_Y(\bt; \z)
  = F(\bt) \cdot J_Y(\bvarsigma(\bt); \z),
\end{align}
where
the \emph{mirror map}
$
 \bvarsigma : \Uv \to H^2_\amb(Y; \bC)
$
is defined by
\begin{align}
 \bvarsigma(\bt) = \iota^* \lb \frac{G(\bt)}{F(\bt)} \rb.
\end{align}
The relation between $\btau = \bvarsigma(\bt)$ and
$\bsigma = \bsbeta + \sqrt{-1} \bsomega$ is given by
$\btau = 2 \pi \sqrt{-1} \bsigma$,
so that $\Im(\bsigma) \gg 0$ corresponds to $\exp(\btau) \sim 0$.
The functions $F(\bt)$ and $G(\bt)$ satisfy
the Picard--Fuchs equations,
and give periods for the B-model VHS
$
 (\cHB, \nabla^B, \scrF_B^\bullet, Q_B).
$

\subsection{}

\pref{eq:Givental_MS}
implies the existence of an isomorphism
\begin{align} \label{eq:CMS}
 \Mir_\scY : \varsigma^*
  (\cHA, \nablaA, \FA^\bullet, Q_A)
  \simto (\cHB, \nablaB, \FB^\bullet, Q_B)
\end{align}
of variations of polarized Hodge structures,
which sends $F(\bt) \bsone$ on the left-hand side to
\begin{align}
  \Omega \coloneqq \Res \lb \frac{1}{\fv_\bsalpha}
  \frac{d \yv_1}{\yv_1} \wedge
  \cdots \wedge \frac{d \yv_n}{\yv_n} \rb
\end{align}
on the right-hand side.
A stronger statement,
which gives an isomorphism of
the $\Gammahat$-integral structure on the A-side
and the natural integral structure on the B-side,
is proved in \cite[Theorem 6.9]{MR3112512}.

\section{Quasimap correlation functions for anti-canonical hypersurfaces in toric varieties}
 \label{sc:QSTV}

\subsection{}

For $\bd \in \Eff(X)$ and $i \in \{ 1, \ldots, m \}$,
we set
\begin{align}
 k_i \coloneqq
\begin{cases}
 \pair{\bu_i}{\bd} & \pair{\bu_i}{\bd} \ge 0, \\
 -1 & \pair{\bu_i}{\bd} < 0.
\end{cases}
\end{align}
and define the \emph{quasimap space} of degree $\bd$ by
\begin{align}
 \Q(X; \bd) \coloneqq
  \lb \prod_{i=1}^m \bA_i^{k_i+1} \rb \GIT_\theta \, \bL
\end{align}
with \pref{eq:toric_GIT} in mind.
An argument parallel to that in Section \ref{QmaptoP} shows that
$\Q(X;\bd)$ is a compactification of the space of holomorphic maps $\bP^1 \to X$ of degree $\bd$.
Later in Section \ref{Quasimap},
we will introduce the moduli spaces
$\Q(W \GIT G; \bd)$
of degree $\bd$ quasimaps from $\bP^1$
to more general GIT quotients $W \GIT G$,
and $\Q(X;\bd)$ here is the special case
for $W= \prod_{i=1}^m \bA_i^{k_i+1}$ and $G=\bL$. 
The first Chern class of the line bundle on $\Q(X;\bd)$
associated with the character $\chi_i$ of $\bL$
will also be denoted by $\bu_i$ by abuse of notation.
The \emph{Morrison--Plesser class}
is defined by
\begin{align} 
 \Phi_\bd \coloneqq
  (\bu_1+\cdots+\bu_m)^{\pair{\bu_1+\cdots+\bu_m}{\bd}}
  \prod_{\pair{\bu_i}{\bd} < 0} \bu_i^{-\pair{\bu_i}{\bd}-1}.
\end{align}
Here, the latter part $\prod_{\pair{\bu_i}{\bd} < 0} \bu_i^{-\pair{\bu_i}{\bd}-1}$ is the Euler class
of $h^1(E^\vee)$ where $E$ is the canonical obstruction theory for $\Q(X;\bd)$ defined in Section 10.7.
The first part $(\bu_1+\cdots+\bu_m)^{\pair{\bu_1+\cdots+\bu_m}{\bd}}$ is the Euler class of the vector bundle formed by the obstruction spaces to being a quasimap to an anti-canonical hypersurface $Y \subset X$ for each element in $\Q(X;\bd)$.
For a polynomial
$P(\alpha_1, \ldots, \alpha_m) \in \bC[\alpha_1, \ldots, \alpha_m]$,
we set
\begin{align}
 \la P(\bu_1, \ldots, \bu_m) \ra_{X,Y,\bd}
  \coloneqq \int_{\Q(X;\bd)} P(\bu_1, \ldots, \bu_m) \Phi_\bd
\end{align}
and
\begin{align} \label{eq:A-Yukawa0}
 \la P(\bu_1, \ldots, \bu_m) \ra_{X,Y}
  \coloneqq \sum_{\bd \in \Eff(X)} \balpha^\bd
   \la P(\bu_1, \ldots, \bu_m) \ra_{X,Y,\bd}
  \in \bZ \db[ \alpha ^{\bd} : \bd \in \Eff(X) \db],
\end{align}
where the completion is taken with respect to the ideal generated by $\Eff (X) \setminus \{0\}$.
Here $\balpha ^{\bd}$ is defined by \eqref{EffCurve}.

\section{Toric residue mirror symmetry}
 \label{sc:TR}


\subsection{}

Let
$
 \Gv = (\Gv_0, \ldots, \Gv_n)
$
be a regular sequence in $S_\Deltav$.
If we set
$
 I_\Gv \coloneqq I_\Deltav/(\Gv_0, \ldots, \Gv_n) I_\Deltav,
$
then the graded piece
$
 I^{n+1}_\Gv
$
is one-dimensional and spanned by
$
 J_\Gv \coloneqq \det \lb \yv_i \partial_{\yv_i} \Gv_j \rb_{i,j=0}^n.
$
The \emph{toric residue}
\cite{MR1396624}
is the map
$
 \Res_\Gv \colon I_\Deltav^{n+1} \to \bC
$
sending
$
 (\Gv_0, \ldots, \Gv_n) I_\Deltav
$
to zero and
$J_\Gv$
to the normalized volume
$\vol(\Deltav)$,
i.e.,
$n!$ times the Euclidean volume of $\Deltav$.
For $\balpha \in \bLv^\reg$,
we define $\Fv_\balpha$
as in \pref{eq:Fv}
and write
$
 \Res_{\fv_\balpha} \coloneqq \Res_{\Fv_\bsalpha}.
$
\pref{th:TRMS} below is introduced
in \cite[Conjecture 4.6]{MR1988969}
and proved
in \cite{MR2104791,MR2099774}.

\begin{theorem}
 \label{th:TRMS}
For any homogeneous polynomial
$P(\alpha_1, \ldots, \alpha_m) \in \bC[\alpha_1, \ldots, \alpha_m]$
of degree $n$,
the generating function
\pref{eq:A-Yukawa0} gives the Laurent expansion
of the toric residue
\begin{align}
 \la P(\bu_1, \ldots, \bu_m) \ra_{X,Y}
  = (-1)^n \Res_{\fv_\bsalpha} \lb \yv_0^{n+1}
      P(\alpha_1 \byv^{\bsb_1}, \ldots, \alpha_m \byv^{\bsb_m}) \rb
\end{align}
around the large radius limit point
associated with the fan $\Sigma$.
\end{theorem}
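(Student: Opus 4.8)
The plan is to deduce the identity from Givental's mirror theorem \pref{eq:Givental_MS} together with the comparison \pref{eq:CMS} of the ambient A-model and residual B-model variations of Hodge structure, giving a conceptual derivation in place of the direct residue manipulations of \cite{MR2104791,MR2099774,MR2147350,MR2218757}.

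The first step is to read the left-hand side as a period. By the GIT presentation \pref{eq:toric_GIT}, each quasimap space $X_\bd$ is itself a smooth projective toric variety, so the intersection number $\int_{X_\bd}P(\bu_1,\ldots,\bu_m)\Phi_\bd$ is a Jeffrey--Kirwan (iterated) residue, exactly as in the projective computations of \pref{sc:PCI}. Summing over $\bd\in\Eff(X)$ and using the Calabi--Yau relation $\bsv=\bu_1+\cdots+\bu_m$ to balance the anticanonical factor against the Morrison--Plesser class $\Phi_\bd$, one identifies $\la P(\bu_1,\ldots,\bu_m)\ra_{X,Y}$ with a residue pairing assembled from Givental's $I$-function \pref{eq:Givental_I}: concretely, with the value, in the algebraic coordinate $\balpha=e^{\bt}$ and before the mirror map, of the A-model Yukawa-type pairing $\int_Y I_Y(\bt;-\z)\cup\mathring{P}\cup I_Y(\bt;\z)$.

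Next I would feed in the mirror theorem. Since $I_Y(\bt;\z)=F(\bt)\,J_Y(\bvarsigma(\bt);\z)$ by \pref{eq:Givental_MS}, the series $I_Y$ carries precisely the quantum product $\circ_\btau$ on $H^\bullet_\amb(Y;\bC)$ through the fundamental solution $L_Y$; dividing by $F(\bt)^2$ and changing coordinates along the mirror map $\btau=\bvarsigma(\bt)$ turns the pairing of the first step into the genuine A-model Yukawa coupling, that is, the pairing of $\nablaA$-flat sections of $\cHA$ against $\mathring{P}$. The third step transports this to the B-side: the isomorphism $\Mir_\scY$ of \pref{eq:CMS} sends $F(\bt)\bsone$ to the residue form $\Omega$ and intertwines $\nablaA$ with $\nablaB$ and $\QA$ with $\QB$, so under it the A-model Yukawa coupling becomes the period integral of $\Omega$ against its Gauss--Manin derivatives over the fibre $\Yv_\balpha$, i.e. the B-model Yukawa coupling of the mirror family $\varphiv\colon\fYv\to\bLv^\reg$. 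The last step is a purely B-side computation: the toric residue theory of \cite{MR1396624}, the toric counterpart of the projective residue formula \cite[Proposition 5.1.2]{MR1328251} worked out on the B-side in \cite[\S 4]{MR1988969}, identifies this period with $(-1)^n\Res_{\fv_\bsalpha}\bigl(\yv_0^{n+1}P(\alpha_1\byv^{\bsb_1},\ldots,\alpha_m\byv^{\bsb_m})\bigr)$, the sign being the standard dimension bookkeeping. Comparing the two descriptions of the same period gives the theorem.

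The step I expect to be the genuine obstacle, and the reason this route yields only a \emph{slightly weakened} form of the conjecture, is the first one. Matching the quasimap intersection numbers on the spaces $X_\bd$ against the $\z$-graded pieces of $I_Y$ means tracking the Morrison--Plesser classes against the hypergeometric factors in \pref{eq:Givental_I} term by term, and the resulting pairing can be cleanly identified with the A-model Yukawa coupling only when $P$ is written through the nef generators $\bp_1,\ldots,\bp_r$, so that $\mathring{P}$ lies in the ambient subring $H^\bullet_\amb(Y;\bC)$ to which Givental's theorem applies, and only over a neighbourhood of the large radius limit point rather than on all of $\bLv^\reg$. Removing these restrictions --- arbitrary homogeneous $P$ of degree $n$, validity on all of $\bLv^\reg$, and the complete intersection variant --- is exactly what the direct combinatorial arguments of \cite{MR2104791,MR2099774} accomplish, and for the statement in its stated generality one still appeals to them.
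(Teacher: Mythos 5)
Your proposal follows essentially the same route as the paper: the paper likewise does not prove \pref{th:TRMS} in full generality (it cites \cite{MR2104791,MR2099774} for that), but derives a weakened version from the factorization \pref{eq:Givental_factorization} of the quasimap generating function through the $I$-function, Givental's mirror theorem \pref{eq:Givental_MS}, the Hodge-theoretic mirror isomorphism \pref{eq:CMS}, and the identity \pref{eq:TR=Yuk} expressing the B-model Yukawa coupling as a toric residue. One correction to your diagnosis of where the weakening comes from: it is not that $P$ must be expressible through the nef generators or that $\mathring{P}$ must land in the ambient subring (any polynomial in the $\bu_i$ automatically does), but that the factorized generating function \pref{eq:Givental_Phi} carries the Euler class $\bv=\bu_1+\cdots+\bu_m$ of the normal bundle of $Y$ in $X$, so differentiating it by $R$ only produces correlators of the form $\la R(\bp_1,\ldots,\bp_r)\,\bv\ra_{X,Y}$, and \pref{eq:TR=Yuk} is likewise stated only for such products; this is precisely why the argument is confined to $P=(\alpha_1+\cdots+\alpha_m)Q$ and why the full statement still rests on the direct proofs in the literature.
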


\cite[Conjecture 4.6]{MR1988969} is generalized to toric complete intersections
in \cite[Conjecture 4.6]{MR2019144}
and proved in \cite{MR2147350,MR2218757}.

\subsection{}

The family $\varphi \colon \cYv \to \bLv^\reg$
of Calabi--Yau manifolds comes
with the holomorphic volume form
\begin{align}
 \Omega \coloneqq \Res \lb \frac{1}{\fv_\bsalpha}
  \frac{d \yv_1}{\yv_1} \wedge
  \cdots \wedge \frac{d \yv_n}{\yv_n} \rb
  \in H^0(\cHB). 
\end{align}
For a homogeneous polynomial
$
 Q(\alpha_1, \ldots, \alpha_m) \in \bQ[\alpha_1, \ldots, \alpha_m]
$
of degree $n-1$,
the $Q$-Yukawa $(n-1)$-point function is defined
in \cite[Definition 9.1]{MR1988969}
by
\begin{align} \label{eq:B-Yukawa}
 Y_Q(\balpha) \coloneqq
 (-1)^{(n-1)(n-2)/2} \frac{1}{\lb 2 \pi \sqrt{-1} \rb^{n-1}}
 \int_{\Yv_\balpha} \Omega \wedge Q \lb \alpha_1 \frac{\partial}{\partial \alpha_1},
  \ldots, \alpha_m \frac{\partial}{\partial \alpha_m} \rb \Omega,
\end{align}
where
the differential operators
$\alpha_1 \partial/\partial \alpha_1, \ldots, \alpha_m \partial/\partial \alpha_m$
act on
$\cHB$
by the Gauss--Manin connection.

\subsection{}

For $Q(\alpha_1, \ldots, \alpha_m) \in \bQ[\alpha_1, \ldots, \alpha_m]$,
we set
\begin{align}
 P(\alpha_1, \ldots, \alpha_m)
  \coloneqq (\alpha_1+\cdots+\alpha_m) Q(\alpha_1, \ldots, \alpha_m)
  \in \bQ[\alpha_1, \ldots, \alpha_m].
\end{align}
By \cite[Theorem 9.7]{MR1988969},
which is attributed to \cite{MR1733735},
one has an equality
\begin{align} \label{eq:TR=Yuk}
 Y_Q(\bsalpha)
  = (-1)^n \Res_{\fv_\balpha} \lb \yv_0^n
  P(\alpha_1 \byv^{\bsb_1}, \ldots, \alpha_m \byv^{\bsb_m}) \rb
\end{align}
of the Yukawa $(n-1)$-point function
and the toric residue.

\subsection{}


Assume that
the unstable locus of the $\bL$-action on $\bA^m$
with respect to $\theta$
has codimension strictly greater than $1$.
Then one has
$
 H^2 (X_\Sigma) = \Pic (X_\Sigma ) = \Pic^\bL (\bA^m)
$
so that
the class $\bp _i$ corresponds to a one-dimensional
representation $\bC_{\bp _i}$ of $\bL$.
By abuse of notation,
we let $\bp _i$ denote the $\bGm$-equivariant Euler class
of the pull-back of the line bundle
$[\bA^m \times \bC_{\bp _i} / \bGm ]$
by the evaluation map
$
 \ev_0 \colon X_{\bd} \to [\bA^m  / \bGm ]
$
at $0 \in \bP^1$. Denote $\bv := \sum_{i=1}^m \bu _i $.

If we set
\begin{align} \label{eq:Givental_Phi}
 \Phi(\bt,\btau;\z)
  \coloneqq
 \sum_{\bd \in \Eff(X)} e^{\inner{\btau}{\bd}}
 \int_{\Q(X;\bd)}^{{ \bGm}} e^{(\bt-\btau)/\z} \Phi _{\bd} \bv,
\end{align}
then for any polynomial
$R(t_1, \ldots, t_r) \in \bQ[t_1, \ldots, t_r]$,
one has
\begin{align}
 \left. R \lb \z \frac{\partial}{\partial t_1}, \ldots, \z \frac{\partial}{\partial t_r} \rb
  \Phi(\bt,\btau;\z) \right|_{\btau = \bt}
  &= 
 \sum_{\bd \in \Eff(X)} e^{\inner{\bt}{\bd}}
  \int_{\Q(X;\bd)}^{{ \bGm}} R (\bp_1, \ldots, \bp_r) \Phi_{\bd} \bv.
\end{align}
In addition,
one has
\begin{align} \label{eq:Givental_factorization}
 \Phi(\bt,\btau;\z)
  =
 \int_Y
  I(\bt;-\z) \cup I(\btau;\z),
\end{align}
by \cite[Proposition 6.2]{MR1653024}. This is the toric hypersurface version of \eqref{eqn:decom Phi}.
Note that  $I(\bt;1)$ is convergent for large enough $-\mathrm{Re}\,\bt$ by ratio test on
the series \eqref{eq:Givental_I} without the prefactor.
By specializing to $\z=1$ 
and using the definition of $\QA$,
one obtains
\begin{align} \label{eq:factorization2}
 \Phi(\bt, \btau;1)
  = \QA \lb I(\bt;1), I(\btau;1) \rb.
\end{align}
By combining \pref{eq:factorization2} with
\pref{eq:Givental_MS},
one obtains
\begin{align} \label{eq:Phi2}
 \Phi(\bt, \btau;1)
  = \QA \lb L^{-1}(\bt;1) F(\bt) \bsone, L^{-1}(\btau;1) F(\btau) \bsone \rb.
\end{align}
Since $L$ is the fundamental solution
for the flat connection $\nablaB$,
the function
$\Phi(\bt, \btau;1)$ is obtained by
parallel-transporting $F(\bt) \bsone \in (\HB)_\bt$
and $F(\btau) \bsone \in (\HB)_\btau$
to the fiber at the same point
and taking the pairing $\QB$ at that point
(the result does not depend on the choice of the point
since $\QB$ is $\nablaB$-parallel).
By sending \pref{eq:Phi2} by \pref{eq:CMS},
one obtains
\begin{align} \label{eq:period}
(2\pi\sqrt{-1})^{n-1} \int_Y I(\bt;-1) I(\btau;1)
  = (-1)^{(n-1)(n-2)/2}\int_\Yv \Omega_\bt \wedge \Omega_\btau.
\end{align}

Assume that $P(\alpha_1, \ldots, \alpha_m)
  = (\alpha_1+\cdots+\alpha_m) Q(\alpha_1, \ldots, \alpha_m)$ for a polynomial $Q$
  and take $R(t_1, ..., t_r) := Q(\sum_{i=1}^r a_{i,1} t_i, ..., \sum_{i=1}^r a_{i,m} t_i) )$
  where $a_{i,j}$ are integers uniquely satisfying $\chi _j = \sum_{i=1}^r a_{i, j} \bp _i$.
By differentiating \pref{eq:period}
by $R(\partial_{t_1}, \ldots, \partial_{t_r})$
and setting $\btau = \bt$,
we obtain
toric residue mirror symmetry
for polynomials of the form
$
P(\alpha_1, \ldots, \alpha_m)
  = (\alpha_1+\cdots+\alpha_m) Q(\alpha_1, \ldots, \alpha_m).
$

\section{Martin's formula}
 \label{sc:Martin}

\subsection{}

We use the same notations $G$,
$T$,
$\W$,
and $\Delta$
for a reductive algebraic group,
a maximal torus,
the Weyl group,
and the set of roots
as in \pref{sc:correlator}.
Let $W$ be an affine scheme
with $G$-action,
and fix a character $\theta$ of $G$.
We write the line bundle on $W \GIT T$
associated with $\alpha \in \Delta$
as $L_\alpha$, and set
\begin{align}
 e \coloneqq \prod_{\alpha \in \Delta} c_1(L_\alpha)
  \in H^{2 |\Delta|}(W \GIT T; \bZ).
\end{align}
We write the natural projection and inclusion as
and say that
$
 \atilde \in H^*(W \GIT T)
$
is a \emph{lift} of
$
 a \in H^*(W \GIT G)
$
if
$
 \pi^* a = \iota^* \atilde.
$

\begin{theorem}[{Martin \cite{Martin}}] \label{th:Martin}
If $\atilde$ is a lift of $a$,
then one has
\begin{align}
 \int_{W \GIT G} a
  = \frac{1}{|\W|} \int_{W \GIT T} \atilde \cup e.
\end{align}
\end{theorem}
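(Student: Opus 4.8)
The plan is to follow Martin's original argument~\cite{Martin}. The first move is to pass to the symplectic model of the quotients: fixing a maximal compact subgroup $K \subset G$ whose maximal torus is the compact form of $T$, the Kempf--Ness theorem identifies $X \GIT G$ with a symplectic reduction $\mu^{-1}(0)/K$, for a moment map $\mu \colon X \to \mathfrak{k}^\vee$ determined by the linearization $\theta$, and identifies $X \GIT T$ with $\mu_T^{-1}(0)/T$, where $\mu_T$ is the composite of $\mu$ with the projection $\mathfrak{k}^\vee \to \mathfrak{t}^\vee$. Under the standing hypotheses that make these quotients smooth compact orbifolds, the $L_\alpha$ descend from the characters of $T$, and $e = \prod_{\alpha \in \Delta} c_1(L_\alpha)$ is the Euler class of the bundle $V$ over $X \GIT T$ associated with the $T$-representation $\mathfrak{g}/\mathfrak{t} = \bigoplus_{\alpha \in \Delta} \bC_\alpha$.

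The object at the centre of the proof is
\begin{align*}
 Z \coloneqq \mu^{-1}(0)/T,
\end{align*}
the symplectic avatar of $X^{G\text{-ss}}/T$. Because $\mu^{-1}(0) \subset \mu_T^{-1}(0)$, the space $Z$ embeds in $X \GIT T$ --- this is the map $\iota$ --- as a suborbifold of real codimension $|\Delta| = \dim \mathfrak{k} - \dim \mathfrak{t}$, while it also maps to $X \GIT G$ --- this is the map $\pi$ --- as a fibre bundle with fibre the flag variety $K/T$, whose real dimension is again $|\Delta|$. I would first establish two facts, after fixing a system $\Delta_+$ of positive roots. (i) Writing $\mu = (\mu_T, \mu^\perp)$ with $\mu^\perp \colon X \to (\mathfrak{k}/\mathfrak{t})^\vee$, the map $\mu^\perp$ descends over $X \GIT T$ to a section of the real rank-$|\Delta|$ bundle associated with $\mathfrak{k}/\mathfrak{t}$, with zero locus exactly $Z$; when $0$ is a regular value of $\mu$ this section is transverse to the zero section, so $Z$ is Poincar\'e dual in $X \GIT T$ to the Euler class of that bundle. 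Grouping the $|\Delta|$ factors of $e$ into the two halves indexed by $\Delta_+$, one may write $e = e' \cup e''$ with $e' = \mathrm{PD}[Z]$ and $\iota^* e'' = \Eul(\nu)$, where $\nu$ is the normal bundle of $Z$ in $X \GIT T$ (the signs working out because of the conventions built into the $L_\alpha$). (ii) On each fibre $K/T$ of $\pi$ the bundle $\nu$ restricts to the sum of the homogeneous line bundles of the positive roots, so the classical identity --- the product of the positive roots maps to $|W|$ times the point class in $H^*(K/T)$ --- gives $\pi_* \Eul(\nu) = |W| \in H^0(X \GIT G)$.

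Granting these, the conclusion is a short chain of applications of the projection formula: using $e = e' \cup e''$ with $e' = \mathrm{PD}[Z]$, then $\int_{X \GIT T} \beta \cup \mathrm{PD}[Z] = \int_Z \iota^* \beta$, then the defining property $\iota^* \tilde a = \pi^* a$ of a lift, and finally the projection formula for the proper map $\pi$, one obtains
\begin{align*}
 \int_{X \GIT T} \tilde a \cup e
  &= \int_{X \GIT T} (\tilde a \cup e'') \cup \mathrm{PD}[Z] \\
  &= \int_Z \iota^* \tilde a \cup \iota^* e'' \\
  &= \int_Z (\pi^* a) \cup \Eul(\nu) \\
  &= \int_{X \GIT G} a \cup \pi_* \Eul(\nu) \\
  &= |W| \int_{X \GIT G} a,
\end{align*}
which is the asserted identity.

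The step I expect to be the main obstacle is (i): showing that $Z = \mu^{-1}(0)/T$ is cut out transversally inside $X \GIT T$ by the descended section of the $\mathfrak{k}/\mathfrak{t}$-bundle, so that its fundamental class is the prescribed half of $e$. This is exactly where the hypothesis that $0$ is a regular value of the full moment map must be converted into transversality to the zero section, and it is also where one has to be careful about orientations --- so that the final constant is $+|W|$ rather than $\pm|W|$ --- and about finite stabilizers if the quotients are genuine orbifolds rather than manifolds. The fibre integral over $K/T$ in (ii) and the three uses of the projection formula are routine.
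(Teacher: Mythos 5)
The paper does not prove this statement---it is quoted directly from Martin's paper---and your argument is precisely Martin's original proof: the intermediate space $Z=\mu^{-1}(0)/T$ sitting as a suborbifold of $X \GIT T$ and as a $K/T$-bundle over $X \GIT G$, the factorization of $e$ into the two halves indexed by $\Delta_+$ and $\Delta_-$, and the fibre integral of the product of the positive roots over $K/T$ giving $|W|$. The sketch is correct, and the only points needing care are the ones you already flag: transversality of the descended section of the $\mathfrak{k}/\mathfrak{t}$-bundle, orientations, and the sign $(-1)^{|\Delta_+|}$ relating $\iota^*e''$ to the Euler class of the normal bundle, which cancels against the same sign implicit in identifying $e'$ with $\mathrm{PD}[Z]$.
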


\subsection{}

Let $\Mat(r,n) \cong \bA^{r \times n}$ be the space of $n \times r$ matrices,
which is considered as the space of linear maps
from an $r$-dimensional vector space
to an $n$-dimensional vector space.
It has a natural action of $\GL_r$,
and the GIT quotient
$
 \Gr(r,n) \coloneqq \Mat(r,n) \GIT \GL_r
$
is the Grassmannian of $r$-spaces in an $n$-space.

\subsection{}

When $W = \Mat(r,n)$ and $G = \GL_r$,
one has
\begin{align}
 W \GIT G &\cong \Gr(r,n), \\
 W \GIT T &\cong (\bP^{n-1})^r
\end{align}
and
\begin{align}
 H^*(\Gr(r,n)) &\cong \bC[\sigma_1, \ldots, \sigma_r] / (h_{n-r+1}, \ldots, h_n), \\
 H^*((\bP^{n-1})^r) &\cong \bC[x_1, \ldots, x_r] / (x_1^n, \ldots, x_r^n),
\end{align}
where
$
 \sigma_i
  = \sigma_i(x_1,\ldots,x_r)
  \in \bC[x_1,\ldots,x_r]^{\fS_r}
$
are elementary symmetric functions and
$
 h_i
  = h_i(x_1,\ldots,x_r)
  \in \bC[x_1,\ldots,x_r]^{\fS_r}
  = \bC[\sigma_1,\ldots,\sigma_r]
$
are complete symmetric functions.
Martin's formula in this case gives
\begin{align}
 \int_{\Gr(r,n)} P(x_1, \ldots, x_r)
  &= \frac{1}{r!} \int_{(\bP^{n-1})^r}
   \prod_{i \ne j} (x_i - x_j)
   P(x_1,\ldots,x_r) \\
  &= \frac{(-1)^{r(r-1)/2}}{r!} \int_{(\bP^{n-1})^r}
   \bsDelta^2 \cup
   P(x_1,\ldots,x_r)
\end{align}
for any
$
 P(x_1,\ldots,x_r) \in \bC[x_1,\ldots,x_r]^{\fS_r}
$
where
$
 \bsDelta \coloneqq \prod_{1 \le i < j \le r} (x_i - x_j).
$

\subsection{}

The equivariant cohomology ring of $\Gr(r,n)$
with respect to the natural action
of the diagonal maximal abelian subgroup
$H \subset  \GL_n$
is presented as
\begin{align}
 H_H^\bullet(\Gr(r,n);\bC)
  \cong \bC[\sigma_1, \ldots, \sigma_r, \lambda_1 , \ldots, \lambda_n]
    \left/ \lb  h_{n-r+1}(\sigma, \lambda), \ldots, h_n(\sigma, \lambda) \rb \right.,
\end{align}
where $h_i$ is the degree $2i$ part of $c^H(\cS) c^H(\cQ) - \prod_{i=1}^n (1 + \lambda_i)$.
Here $\cS$ and $\cQ$ are the tautological bundle and the universal quotient bundle respectively,
and $c^H(-)$ stands for the $H$-equivariant total Chern class.
Note that
$\sigma_i \coloneqq c^H_i(\cS)$
is the elementary symmetric function of the $H$-equivariant Chern roots $x_1, \ldots, x_r$ of $\cS$,
and $c^{H}_i(\cQ)$ for $i = 1, \ldots, n-r$
are expressed in terms of $\sigma_1, \ldots, \sigma_r$ and $\lambda_1, \ldots, \lambda_n$
by the condition $h_1 = \cdots = h_{n-r} = 0$.
Martin's formula gives
\begin{multline} \label{eq:Martin_Gr}
 \int_{\Gr(r,n)}^H P(\sigma_1,\ldots,\sigma_r) \\
 = \sum_{1 \le i_1 < i_2 < \cdots i_r \le n}
  \Res_{\bx = (\lambda_{i_1},\ldots,\lambda_{i_r})}
   P(\sigma_1,\ldots,\sigma_r)
   \prod_{i \ne j} (x_i-x_j)
   \frac{dx_1 \wedge \cdots \wedge dx_r}
   {\prod_{i=1}^r \prod_{j=1}^n (x_i - \lambda_j)}.
\end{multline}

\section{Quasimap spaces for GIT quotients}
 \label{sc:QS}

\subsection{}\label{C1}

Let $G$ be a reductive algebraic group
acting on an affine variety $W$
and fix a character $\theta$ of $G$.
In this paper,
we will always assume the following:
\begin{enumerate}
 \item
Semi-stability implies stability.
 \item
The semi-stable locus $W^\semistable$ is smooth and non-empty.
 \item
The $G$-action on $W^\semistable$ is free
(however,
see \cite{MR3412343}
for allowing finite non-trivial stablizers).
 \item
The codimension of the unstable locus $W \setminus W^\semistable$
is greater than one.
\end{enumerate}
The GIT quotient is defined by
$
 W \GIT G
  \coloneqq W^\semistable/G,
$
which is an open substack of $[W/G]$.

\subsection{}

A map
$u \colon \bP^1 \to [W/G]$
to the quotient stack $[W/G]$
is pair $(P, \utilde)$
of a principal $G$-bundle $P \to \bP^1$
and a $G$-equivariant map
$\utilde \colon P \to W$.
It is called a \emph{quasimap}
if the generic point of $\bP^1$ is mapped
to $W \GIT G \subset [W/G]$.
A point in the inverse image of the unstable locus
will be called a \emph{base point}.

\subsection{}

For a quasimap $u \colon \bP^1 \to [W/G]$
and a $G$-equivariant line bundle $L$ on $W$,
the pull-back $\utilde^* L$
is a $G$-equivariant line bundle on $P$,
which descends to a line bundle $u^* L$ on $\bP^1$.
The \emph{degree}
of a quasimap $u \colon \bP^1 \to [W/G]$
is the map $\bd \colon \Pic^G W \to \bZ$
sending $L \in \Pic^G W$ to $\deg u^*L$.

\subsection{} \label{Quasimap}

An \emph{isomorphism} of quasimaps
$u = (P, \utilde)$ and $u' = (P', \utilde')$
is an isomorphism
$\varphi \colon P \to P'$
of principal $G$-bundles
such that $\utilde = \utilde' \circ \varphi$.
By \cite[Theorem 7.1.6]{MR3126932},
the moduli functor
for quasimaps of degree $\bd$
is representable by a Deligne-Mumford stack,
which will be denoted by $\Q(W \GIT G; \bd)$.
This stack is denoted by
$
 \operatorname{Qmap}_{0,0}(W \GIT G, \bd; \bP^1)
$
in \cite[\S 7.2]{MR3126932} and
$
 \operatorname{QG}^{0+}_{0,0,\bd}(W \GIT G)
$
in \cite[Section 2.6]{MR3272909}.
Note that $\Q(W \GIT G)$ depends
not only on $W \GIT G$ and $\bd$
but also on $W$, $G$, and $\theta$.

\subsection{}

Let $\Qone(W \GIT G; \bd) \subset \Q(W \GIT G; \bd)$
be the substack parametrizing quasimaps
such that
$u|_{\bP^1 \setminus \{ 0 \}}$
is a constant map to $W \GIT G$.
This implies that
$0 \in \bP^1$ is a base point of length $\bd(\theta)$
by \cite[Lemma 7.1.2]{MR3126932}.
This stack is denoted by
$
 \Q_{0,0+\bullet}(W \GIT G, \bd)_0
$
in \cite[Section 4.1]{MR3272909}.
There is a natural map
$
 \ev \colon \Qone(W \GIT G; \bd) \to W \GIT G,
$
called the \emph{evaluation map},
which sends $u \in \Qone(W \GIT G; \bd)$
to $u(\infty) \in W \GIT G$.

\subsection{}

There is a natural $\bGm$-action
on $\Q(W \GIT G;\bd)$
coming from the standard $\bGm$-action on $\bP^1$.
As described in \cite[Section 4.1]{MR3272909},
the fixed locus of this action is identified with the coproduct
\begin{align}
 \coprod_{\bd_1 + \bd_2 = \bd}
  \Qone(W \GIT G; \bd_1)
  \times_{W \GIT G}
  \Qone(W \GIT G; \bd_2)
\end{align}
of fiber products
with respect to the evaluation map.

\subsection{}\label{C2}

If $W$ has at worst lci singularity,
then $\Q(W \GIT G;\bd)$ has a canonical perfect obstruction theory,
which allows one to define the \emph{virtual fundamental cycle}.
The canonical perfect obstruction theory is $\lb \mathbb{R} \pi_* \ev^* T_{[W/G]} \rb^\vee$, where $T_{[W/G]}$ is the tangent complex of $[W/G]$, $\ev: \Q(W \GIT G;\bd) \times \bP^1 \to [W/G]$ is the evaluation map, and $\pi: \Q(W \GIT G;\bd) \times \bP^1 \to \Q(W \GIT G;\bd) $ is the first projection; see Theorem 7.2.2 of \cite{MR3272909}
or Section 5.
The virtual fundamental cycle
is an element of the Chow group of $\Q(W \GIT G;\bd)$
whose degree is given by the virtual dimension
\begin{align}
 \vdim \Q(W \GIT G;\bd) = \pair{\bd}{\det T_W}+ \dim W \GIT G.
\end{align}

\subsection{}

Since the stack $\Qone(W \GIT G; \bd)$
is the union of connected components
of the fixed locus of the $\bGm$-action,
it has a perfect obstruction theory
inherited from $\Q(W \GIT G; \bd)$.
The virtual push-forward
\begin{align}
 \ev_*^\virt(-) \coloneqq
  \PD \lb \ev_* \lb (-) \cap \ld \Qone(W \GIT G; \bd) \rd^\virt \rb \rb
\end{align}
along the evaluation map
$
 \ev \colon \Qone(W \GIT G; \bd) \to W \GIT G
$
allows one to define the \emph{$I$-function}
\begin{align}
 I(\bt ;\z) \coloneqq e^{\inner{\bp}{\bt}/\z} \sum_{\bd \in \Eff(W \GIT G)}
  e^{\inner{\bd}{\bt}} I_\bd
\end{align}
by
\begin{align}
 I_{\bd} \coloneqq \ev^\virt_* \lb
  \frac{1}
  {\Eul^{\bGm} \lb N^{\mathrm{virt}}_{\Qone(W \GIT G; \bd)/\Q(W \GIT G; \bd)} \rb}
    \rb,
\end{align}
where
the denominator is the $\bGm$-equivariant Euler class
of the virtual normal bundle.
Here $\bp$ is a basis of $H^2(W \GIT G)$, and
$\bt$ is the coordinate of $H^2(W \GIT G)$ corresponding to $\bp$.

An $H$-action on $W$
commuting with the $G$-action
induces an $H$-action on $\Qone(W \GIT G; \bd)$,
which allows one to define
the $H$-equivariant $I$-function of $W\GIT G$.

\subsection{}
There exits a $G$-space $V$ with a $G$-equivariant closed embedding
$W\hookrightarrow V$.
Let
$
 \fraku \colon \Q (W \GIT G; \bd) \times \bP^1 \to [W/G]
$
be the universal quasimap.
It consists of a principal $G$-bundle
$\cP$ on $\Q (W \GIT G; \bd) \times \bP^1$
and a $G$-equivariant morphism
$\widetilde{\fraku} \colon \cP \to W$.
Let
$
 \cP' \coloneqq \cP|_{\Q (W \GIT G; \bd) \times \{ \mathrm{pt} \}}
$
be the restriction of $\cP$ to a fiber of the second projection
$\Q (W \GIT G; \bd) \times \bP^1 \to \bP^1$.
We write the Chern--Weil homomorphism
defined by $\cP'$ as
\begin{align}
 \CW \colon \bC[\frakg]^G \to H^* \lb \Q (W \GIT G; \bd) \rb.
\end{align}
Note that $\bC[\frakg]^G$ is isomorphic to $\bC[\frakt]^\W$
by Chevalley restriction theorem.
For $P \in \bC[\frakt]^\W$,
we set
\begin{align}
 \la P \ra_{W\GIT G, \bd}
  &\coloneqq
 \int _{\ld \Q(W\GIT G; \bd)\rd^\virt}
  \CW(P), \\
 \la P \ra_{W \GIT G}
  &\coloneqq
 \sum_{\bd \in \Eff(W \GIT G)} e^{\pair{\bd}{\bt}}
  \la P \ra _{W \GIT G, \bd}.
   \label{eq:quasimap_correlator}
\end{align}

\begin{conjecture} \label{cj:main}
 Suppose that $W\subset V$ is the zero locus of $G$ semi-invariant
polynomials $f_i$, $i=1, ..., r$.
Provided with conditions in \S \ref{C1} and \ref{C2}, for any $P \in \bC[\frakt]^\W$,
the generating function \pref{eq:quasimap_correlator}
of quasimap invariants coincides
with the correlation function
\pref{eq:correlator} of the A-twisted gauged linear sigma model
up to an overall sign;
\begin{align} \label{eq:main_conj}
 \cor{P} = \pm \la P \ra_{W \GIT G}.
\end{align}
Here the potential of GLSM is given as a $G$-invariant function 
$\sum_i f_i p_i$ of $V\times \mathbb{A}^r$ where $p_i$ denotes
i-th coordinate of $\mathbb{A}^r$ with R-charge $2$.
\end{conjecture}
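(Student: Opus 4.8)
The plan is to prove \pref{cj:main} for complete intersections in Grassmannians $\Gr(r,n) \cong \Mat(r,n) \GIT \GL_r$, which is carried out in \pref{sc:RMSG}; for a general $W \GIT G$ the statement is left as a conjecture. The guiding idea is to rewrite both sides of \pref{eq:main_conj} as iterated residues on \emph{abelianized quasimap spaces}, where the residue computations of \pref{sc:QSPS}, \pref{sc:PCI} and \pref{sc:concave} apply verbatim, and then to pass between the Grassmannian and the abelian pictures by Martin's formula (\pref{th:Martin}). As in the treatment of $\bP^{n-1}$ above, one works $H$-equivariantly with respect to the diagonal torus $H \subset \GL_n$, keeping the parameters $\lambda_1, \ldots, \lambda_n$, and passes to the non-equivariant limit only at the end.

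First I would introduce, for a multidegree $\bd = (d_1, \ldots, d_r) \in \bZ^r_{\ge 0}$, the abelianized quasimap space of $\Gr(r,n)$, namely the quasimap space of $\Mat(r,n) \GIT T \cong (\bP^{n-1})^r$ in the sense of \pref{sc:QS}, where $T \subset \GL_r$ is the diagonal torus. Since a morphism from $\bP^1$ to $[\Mat(r,n)/T]$ is an $r$-tuple of morphisms to $[\bC^n/\bGm]$, this space is simply $\prod_{i=1}^r \Q(\bP^{n-1};d_i)$. I would equip it with the obstruction theory induced by the defining sections $f_1, \ldots, f_r$ of $W \subset V$ and identify the resulting virtual class and twisted Euler classes factor by factor, exactly as for projective complete intersections in \pref{sc:PCI} and concave bundles in \pref{sc:concave}; the $p_i$-fields of R-charge $2$ account for the factors with $r_i = 2$ in $\Zmat_\bd(x;\z)$. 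Integration of a polynomial in the hyperplane classes, the $\lambda_j$ and $\z$ over $\prod_{i=1}^r \Q(\bP^{n-1};d_i)$ is then an iterated residue, by the formulas of \pref{sc:QSPS}.

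The bridge between the two pictures is a virtual version of Martin's formula: writing $d = d_1 + \cdots + d_r$ for the total degree, one should have
\begin{align*}
 \la P \ra_{\Gr(r,n),d}
  = \frac{1}{r!} \sum_{d_1 + \cdots + d_r = d}
   \int_{\prod_{i=1}^r \Q(\bP^{n-1};d_i)} \CW(P) \cup \widetilde{e}_\bd,
\end{align*}
where $\widetilde{e}_\bd$ is a lift to the abelianized quasimap space of the Euler class $e = \prod_{\alpha \in \Delta} c_1(L_\alpha)$ of \pref{th:Martin}. The crucial point is that, unlike $e$ itself, this lift is twisted by $\bd$ and by $\z$, and equals the vector-multiplet factor $\Zvec_\bd(x;\z)$ of \pref{eq:Zvecd2}, with $\bd$ in the role of the coweight $d$ there; for a positive root $\alpha = x_i - x_j$, the two factors $\alpha(x)$ and $\alpha(x + \bd \z)$ together with the sign $(-1)^{\alpha(\bd)+1}$ arise as the $\bGm$-equivariant Euler classes of $H^0$ and $H^1$ of the line bundle $u^* L_\alpha$ of degree $d_i - d_j$ on $\bP^1$. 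This is the quasimap incarnation of the abelian/nonabelian correspondence, and I expect its proof to go through a comparison of the obstruction theories of the Grassmannian and abelian quasimap spaces along the natural diagram of quotient stacks, with a virtual-pushforward and excess-intersection argument producing the root factors.

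Finally I would assemble the pieces: combining the virtual Martin formula with the abelian residue computation turns $\sum_d e^{\inner{t'}{\bd}} \la P \ra_{\Gr(r,n),d}$ into $\frac{1}{r!} \sum_\bd e^{\inner{t'}{\bd}}$ times an iterated residue of $\Zvec_\bd(x;\z) \, \Zmat_\bd(x;\z) \, P(x)$. It then remains to identify this iterated residue with the Jeffrey--Kirwan residue $\JK_\fc$ for the $\GL_r$ hyperplane arrangement, using Martin's residue formula \pref{eq:Martin_Gr} together with the description of $\JK_\fc$ as a signed sum of iterated residues over flags, and the fact that $|\W| = r!$; this produces \pref{eq:correlator} up to the overall sign allowed in \pref{eq:main_conj}. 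The two steps I expect to be the genuine obstacles are (i) the virtual Martin formula itself --- the precise comparison of obstruction theories and the determination of the twisted class $\widetilde{e}_\bd$ --- and (ii) the matching of the iterated residue with $\JK_\fc$, which together with the tracking of the signs $(-1)^{\alpha(\bd)+1}$ and of the overall sign calls for a somewhat delicate, though essentially combinatorial, analysis.
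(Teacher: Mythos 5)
Your setup agrees with the paper's: the abelianized quasimap spaces $\AQ(\Gr(r,n);\bd)=\prod_{i=1}^r\Q(\bP^{n-1};d_i)$, the observation that the Jeffrey--Kirwan residue \pref{eq:correlator} for $G=\GL_r$, $V=\Mat(r,n)$ reproduces exactly the abelianized correlator (this is \pref{eq:Gr_ab}, and it is essentially immediate from $\Zvec_\bd=\prod_{i\ne j}(x_i-x_j)$ and $\Zmat_\bd=\prod_i(x_i^{-d_i-1})^n$ --- no delicate flag-by-flag matching with $\JK_\fc$ is needed), and the role of Martin's formula. The genuine gap is in your bridge between the nonabelian and abelian sides. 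You propose a ``virtual Martin formula'' proved by comparing obstruction theories ``along the natural diagram of quotient stacks'' with a virtual pushforward and excess intersection argument. But no such diagram exists: $\Q(\Gr(r,n);d)$ is a Quot scheme, while $\coprod_{|\bd|=d}\prod_i\Q(\bP^{n-1};d_i)$ is a disjoint union of products of projective spaces indexed by splitting types $\bd$, and there is no morphism in either direction, nor a common affine $\GL_r$-space whose $T$-quotient gives the abelianized space (the $T$-weights on $\prod_i\bA^{n(d_i+1)}$ do not extend to a $\GL_r$-action unless all $d_i$ coincide). Martin's theorem as stated requires a single $X$ with $X\GIT G$ and $X\GIT T$; a degree-graded virtual analogue for quasimap spaces is precisely the abelian/nonabelian correspondence, which is a substantial theorem in its own right and not something your sketch supplies.

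The paper circumvents this entirely. It never compares the two moduli spaces directly; instead it shows that both correlators factorize, under $\bGm$-localization on the domain curve, through the \emph{same} expression $\int_{\Gr(r,n)}I_{\Gr(r,n)}(t;\z)\cup I_{\Gr(r,n)}(\tau;-\z)$. On the abelian side this uses the factorization for $(\bP^{n-1})^r$ from \pref{sc:QSPS}, the insertion of the Vandermonde-type factors by acting with the differential operators $\cD_\bt$, $-\cD_\btau$, the Hori--Vafa relation \pref{eq:HV_conj} between $I_{(\bP^{n-1})^r}$ and $I_{\Gr(r,n)}$ (a proven theorem of \cite{MR2110629}, not something to be re-derived), and the \emph{classical} Martin formula \pref{eq:Martin_Gr} applied only on the target $\Gr(r,n)$ versus $(\bP^{n-1})^r$. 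On the nonabelian side it is the standard localization on $\Q(\Gr(r,n);d)$, whose $\bGm$-fixed loci are the partial flag manifolds indexed by the same splitting types $\bd$ --- this fixed-locus decomposition is where the grading by $\bd$ enters, replacing your hypothetical degree-graded comparison of virtual classes. Equality of the two correlators for general $P$ then follows by inserting $P(\bx+\bd\z)$ into the factorization and setting $t=\tau$, as in \pref{eq:Gr_ana} and \pref{eq:Gr_ana2}. If you want to pursue your route, you would need to prove the quasimap abelian/nonabelian correspondence independently, which is a different (and harder) project than the one the paper carries out.
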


\subsection{}

By taking $\cP'$ to be the fiber
over a fixed point of the natural $\bGm$-action
on the domain curve $\bP^1$,
one can define $\bGm$-equivariant quasimap invariants
$\la P \ra_{W \GIT G}^{\bGm}$.
If $W$ has an action of an algebraic torus $H$
commuting with the action of $G$,
then one can define $H \times \bGm$-equivariant quasimap invariants
$\la P \ra_{W \GIT G}^{H \times \bGm}$.

\section{Quasimap spaces for Grassmannians}
 \label{sc:QSG}

\subsection{}

The quasimap space
$
 \Q(\Gr(r,n); d)
$
classifies pairs $(P, u)$
of a principal $\GL_r$-bundle $P$
and a $\GL_r$-equivariant map $u$.
The choice of a principal $\GL_r$-bundle $P$
is equivalent
to the choice of a vector bundle $S$ of rank $r$,
and the choice of a $\GL_r$-equivariant map $u$
is equivalent to the choice of a map
$\cS \to \cO_{\bP^1}^{\oplus n}$,
which is a sheaf injection
since the generic point must go to the semi-stable locus
(but not necessarily a morphism of vector bundles).
The choice of a sheaf injection
$\cS \to \cO_{\bP^1}^{\oplus n}$
is equivalent to the choice of a surjection
$\cO_{\bP^1}^{\oplus n} \to \cQ$,
where $\cQ$ is a coherent sheaf
whose Hilbert polynomial is $d+(n-r)(t+1)$.
This is the same as the Hilbert polynomial
of a locally free sheaf
of rank $n-r$ and degree $d$,
and one has an isomorphism
\begin{align}
 \Q(\Gr(r,n); d)
  \cong \Quot_{\bP^1,d}(\cO_{\bP^1}^{\oplus n}, n-r).
\end{align}

\subsection{}

It is shown in \cite[Lemma 1.2]{MR2110629}
that the subspace $\Qone(\Gr(r,n);d)$ of $\Q(\Gr(r,n);d)$
is decomposed into connected components as
\begin{align}
 \Qone(\Gr(r,n);d)
  = \coprod_{\abs{\bd}=d} \Qone(\Gr(r,n);\bd),
\end{align}
where
$\bd = (d_1, \ldots, d_r)$ runs over elements of $\bN^r$
satisfying
$\abs{\bd} \coloneqq d_1+\cdots+d_r=d, \ d_1\le d_2 \le ... \le d_r $ 
and each connected component is isomorphic
to the partial flag manifold
\begin{align}
 \Qone(\Gr(r,n);\bd) \cong \Fl(m_1,\ldots,m_k,r,n),
\end{align}
where $1 \le m_1 < m_2 < \cdots < m_k = r$
denote the jumping indices;
\begin{align}
 0 \le d_1 = \cdots = d_{m_1}
  < d_{m_1+1} = \cdots = d_{m+2}
  < \cdots.
\end{align}
Let $x_1, \ldots, x_r$ be the Chern roots
of the dual of the universal subbundle on $\Gr(r, n).$
We also define
$
 \abs{\bx} \coloneqq \sum_{i=1}^r x_i
$
and
$\abs{\bd} \coloneqq \sum_{i=1}^r d_i$
for $\bd = (d_1, ..., d_r)$.
The $I$-function can be computed by localization as
\begin{align}
 I_{\Gr(r,n)}(t;\z)
  &= \sum_{\bd \in \bN^r} (-1)^{(r-1)\abs{\bd}}
   e^{(|\bd|+|\bx|/\z)t} I_\bd(\z)
\end{align}
where
\begin{align} \label{eq:I_Gr}
 I_{\bd}(\z)
  &=
   \frac{\prod_{1\le i < j \le r} (x_i-x_j+(d_i-d_j)\z)}
   {\prod_{1 \le i < j \le r} (x_i-x_j)
    \prod_{i=1}^r \prod_{j=1}^n \prod_{l=1}^{d_i} (x_i + l\z)}.
\end{align}
As shown in \cite[page 109]{MR2110629},
the $I$-function and the $J$-function agrees
for $\Gr(r,n)$
just as in the case of projective spaces.

\subsection{}
The Hori--Vafa conjecture
\cite{0002222}
proved in \cite{MR2110629}
shows that
the $I$-functions of $(\bP^{n-1})^r$ and $\Gr(r,n)$ are related by
\begin{align} \label{eq:HV_conj}
 I_{\Gr(r,n)}(t;\z) = e^{-\sigma_1(r-1) \pi \sqrt{-1}/\z}
  \left.
   \frac{\cD I_{(\bP^{n-1})^r}(\bt;\z)}{\bsDelta}
  \right|_{t_i = t + (r-1) \pi \sqrt{-1}}
\end{align}
where
\begin{align}
 \cD \coloneqq \prod_{1 \le i<j \le r}
  \lb \z \frac{\partial}{\partial t_i} - \z \frac{\partial}{\partial t_j} \rb.
\end{align}

\subsection{}

As shown in \cite{MR2110629},
the equivariant $I$-function
with respect to the natural action of $H = (\bGm)^n$
on $\Mat(r,n)$
is given by
\begin{align} \label{eq:J_Gr}
 I_{\Gr(r,n)}^H(t;\z)
  &= e^{t \sigma_1/z} \sum_{\bd \in \bN^r} (-1)^{(r-1)|\bd|} e^{|\bd| t}
   \frac{\prod_{1\le i < j \le r} (x_i-x_j+(d_i-d_j)\z)}
   {\prod_{1 \le i < j \le r} (x_i-x_j)
    \prod_{i=1}^r \prod_{j=1}^n \prod_{l=1}^{d_i} (x_i - \lambda_j + l\z)},
\end{align}
and the factorization gives
\begin{align}
 \sum_{d=0}^\infty e^{d \tau}
  \la e^{(t-\tau) \sigma_1/\z} \ra_{\Gr(r,n),d}^{H \times \bGm}
  = \int_{\Gr(r,n)}^H I^H_{\Gr(r,n)}(t;\z) \cup I^H_{\Gr(r,n)}(\tau;-\z).
\end{align}
Here
$
 \sigma_1 = \sum_{i=1}^r x_i
$
is the $H$-equivariant first Chern class of the vector bundle
\begin{align}
 \cS^\vee
  = \lb \Mat(r, n) \times \bC^r \rb \GIT G
\end{align}
on $\Gr(r,n)$,
where the $G$-action on $\bC^r$ is the defining representation.

\subsection{}

Let $\cV$ be an equivariant vector bundle on $\Gr(r,n)$
associated with a representation $V$ of $\GL_r$.
If $\cV$ is globally generated and
$\det \cV \cong \omega_{\Gr(r,n)}^\vee$,
then the zero $Y \coloneqq s^{-1}(0)$
of a general section $s \in H^0(\cV)$
is a smooth Calabi--Yau manifold
by a generalization of the theorem of Bertini
\cite[Theorem 1.10]{MR1201388}.

\subsection{}

Let $[\Mat(r,n)/\GL_r]$ be the quotient stack
containing $\Gr(r,n)$ as an open substack.
The complete intersection $Y \subset \Gr(r,n)$ is an open substack
of $\cY \coloneqq [Z/\GL_r]$,
where $Z \subset \Mat(r,n)$ is the zero
of the map $\stilde \colon \Mat(r,n) \to V$
underlying $s$.
Indeed, $Y$ has a GIT quotient description $Y = Z \GIT \GL_r$,
which allows us to define $\Q(Y;d)$
and its virtual fundamental cycle
as in \pref{sc:QS}.
Let $\cS_\cY^\vee$ be the vector bundle on $\cY$
associated with the defining representation of $\GL_r$.
Any point $p \in \bP^1$ determines
a map $\ev_p \colon \Q(Y;d) \to \cY$
sending $f \colon \bP^1 \to \cY$ to $f(p) \in \cY$, and
the Chern classes
\begin{align}
 \sigma_i \coloneqq c_i \lb \ev_p^* \cS_\cY ^{\vee} \rb, \quad i = 1, \ldots, r
\end{align}
does not depend on the choice of $p \in \bP^1$.
For
$
 P(\sigma_1, \ldots, \sigma_r)
  \in \bC[\sigma_1,\ldots,\sigma_r],
$
we set
\begin{align}
 \la P(\sigma_1, \ldots, \sigma_r) \ra_{Y,d}
  \coloneqq \int_{[\Q(Y;\bd)]^\virt} P(\sigma_1, \ldots, \sigma_r)
\end{align}
and
\begin{align} \label{eq:Gr_ci}
 \la P(\sigma_1, \ldots, \sigma_r) \ra_{Y}
 \coloneqq \sum_{d=0}^\infty e^{dt}
   \la P(\sigma_1, \ldots, \sigma_r) \ra_{Y,d}.
\end{align}
%

\subsection{}

The equivariant $I$-function of $Y$ is given by
\begin{align}
 I_Y^H(t;\z) =
 \left.
  \sum_{\bd \in \bN^r}
   e^{\inner{(\bd+\bx/\z)}{\bt}}
   I_{\bd}(\bt;\z)
  \right|_{t_i = t + (r-1) \pi \sqrt{-1}},
\end{align}
where
\begin{align} \label{eq:I_d_Gr}
 I_{\bd}(\bt;\z)
  \coloneqq
   \frac{\prod_{\bsdelta \in \Delta(V)} \prod_{l=1}^{\pair{\bsdelta}{\bd}}
    \lb \pair{\bsdelta}{\bx} + l \z \rb
    \prod_{1\le i < j \le r} (x_i-x_j+(d_i-d_j)\z)}
   {\prod_{1 \le i < j \le r} (x_i-x_j)
    \prod_{i=1}^r \prod_{j=1}^n \prod_{l=1}^{d_i} (x_i - \lambda_j + l\z)},
\end{align}
where $\Delta(V)$ denotes the set of weights of $V$ and
$  \pair{\bsdelta}{\bx}$ denotes the first Chern class associated to
the weight $\bsdelta$ (expressed in terms of the fundamental weights $x_1, ..., x_r$ of 
the maximal diagonal torus of $G$).
Localization with respect to the natural $\bGm$-action
on $\Q(\Gr(r,n);d)$
shows
\begin{align} \label{eq:Gr_factorization}
 \la e^{(t-\tau) \sigma_1 /\z} \ra_Y^{H \times \bGm}
  &= \int_Y^H I^H(t;\z) \cup I^H(\tau;-\z)
\end{align}
just as in \pref{eq:Givental_factorization}.

\section{Residue mirror symmetry for Grassmannians}
 \label{sc:RMSG}

\subsection{}

We define the \emph{abelianized quasimap space}
for $\Gr(r,n)$ by
\begin{align}
 \AQ(\Gr(r,n);d)
  &\coloneqq \coprod_{\abs{\bd}=d} \AQ(\Gr(r,n);\bd), \\
 \AQ(\Gr(r,n);\bd)
  &\coloneqq \Q(\bP^{n-1};d_1) \times \cdots \times \Q(\bP^{n-1};d_r),    \label{eqn:prod}
\end{align}
where $\bd$ runs over $\bd = (d_1,\ldots, d_r) \in \bN^r$
such that $\abs{\bd} \coloneqq d_1+\cdots+d_r=d$.
An abelianized quasimap
\begin{align}
 \varphi(z_1,z_2) =
 \bigg(
   \big( \varphi_{i1}(z_1,z_2), \ldots, \varphi_{in}(z_1,z_2) \big) \in \Q(\bP^{n-1};d_i)
 \bigg)_{i=1}^r
\end{align}
defines a genuine map of degree
$d$
if the matrix $(\varphi_{ij}(z_1,z_2))_{i,j}$ has rank $r$
for any $(z_1,z_2) \ne 0$.
For
$
 P(\sigma_1,\ldots,\sigma_r)
  \in \bC[\sigma_1,\ldots,\sigma_r],
$
we set
\begin{align}
 \la P(\sigma_1,\ldots,\sigma_r) \ra_{\Gr(r,n),\bd}^\ab
  &\coloneqq \frac{1}{r!} 
   \int_{\AQ(\Gr(r,n);\bd)} 
   \prod_{i \ne j} (x_i - x_j)
   P(\sigma_1(x_1,\ldots,x_r),\ldots,\sigma_r(x_1,\ldots,x_r)),
    \label{eq:Gr_correlator1} \\
 \la P(\sigma_1,\ldots,\sigma_r) \ra_{\Gr(r,n),d}^\ab
  &\coloneqq
  \sum_{|\bd|=d}
   \la P(\sigma_1,\ldots,\sigma_r) \ra_{\Gr(r,n),\bd}^\ab, \\
 \la P(\sigma_1,\ldots,\sigma_r) \ra_{\Gr(r,n)}^\ab
  &\coloneqq
  \sum_{d=0}^\infty (-1)^{(r-1)d} q^d
   \la P(\sigma_1,\ldots,\sigma_r) \ra_{\Gr(r,n),d}^\ab.
\end{align}

\subsection{}

If we set $G \coloneqq \GL_r$ and $V \coloneqq \Mat(r,n)$,
where $G$ acts naturally on $V$
and $\bGm$ acts trivially on $V$,
then we have
$
 \Zvec_\bd(x) = \prod_{i \ne j} (x_i-x_j)
$
and
$
 \Zmat_\bd(x) = \prod_{i=1}^r \lb x_i^{-d_i-1} \rb^n,
$
so that \pref{eq:correlator} gives the same result
as \pref{eq:Gr_correlator1};
\begin{align} \label{eq:Gr_ab}
 \cor{P(\sigma_1,\ldots,\sigma_r)}
  = \la P(\sigma_1, \ldots, \sigma_r) \ra^{\ab}_{\Gr(r,n)}.
\end{align}

\subsection{}

We write the ring homomorphism
$
 \bC[\sigma_1,\ldots,\sigma_r] \to \QH(\Gr(r,n))
$
sending
$
 \sigma_i \in \bC[\sigma_1,\ldots,\sigma_r]
$
to
$
 \sigma_i
  \in H^*(\Gr(r,n);\bC)
  \cong \bC[\sigma_1, \ldots, \sigma_r] / (h_{n-r+1}, \ldots, h_n)
$
as
$
 P(\sigma_1,\ldots,\sigma_r) \mapsto \mathring{P}(\sigma_1,\ldots,\sigma_r)
$
just as in the case of $\bP^{n-1}$.

\begin{theorem} \label{th:Gr}
For any
$
 P(\sigma_1,\ldots,\sigma_r)
  \in \bC[\sigma_1,\ldots,\sigma_r],
$
one has
\begin{align}
 \la P(\sigma_1,\ldots,\sigma_r) \ra_{\Gr(r,n)}^\ab
  &= \int_{\Gr(r,n)} \mathring{P}(\sigma_1,\ldots,\sigma_r).
\end{align}
\end{theorem}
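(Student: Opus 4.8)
The plan is to follow the pattern of \pref{th:P-correlator} and \pref{cr:IV1}: first rewrite the abelianized correlator as an (iterated) residue, and then identify that residue with $\int_{\Gr(r,n)} \mathring{P}$ by means of Martin's formula together with the quantum cohomology ring of $\Gr(r,n)$.

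For the first step I would use that $\AQ(\Gr(r,n);\bd) = \Q(\bP^{n-1};d_1) \times \cdots \times \Q(\bP^{n-1};d_r)$ and apply the residue description \pref{eq:Pr_int} of integration on each projective factor. Since $d = d_1 + \cdots + d_r$ gives $(-1)^{(r-1)d} = \prod_{i=1}^{r} (-1)^{(r-1)d_i}$, the sum over $\bd \in \bN^r$ splits into $r$ independent geometric series, exactly as in the passage from $\la P(x) \ra_{\bP^{n-1}}$ to a residue in the proof of \pref{cr:IV1}, and one obtains
\begin{align*}
 \la P(\sigma_1, \ldots, \sigma_r) \ra_{\Gr(r,n)}^{\ab}
  = \frac{1}{r!} \Res
     \frac{\prod_{i \neq j} (x_i - x_j)\, P(\sigma_1(\bx), \ldots, \sigma_r(\bx))\, dx_1 \wedge \cdots \wedge dx_r}
          {\prod_{i=1}^{r} \lb x_i^{n} - (-1)^{r-1} q \rb}.
\end{align*}
Evaluating the residues as sums over the $n$-th roots of $(-1)^{r-1} q$ as in \pref{cr:IV1}, the configurations with a repeated root drop out because of the Vandermonde factor $\prod_{i \neq j}(x_i - x_j)$, the $1/r!$ absorbs the remaining over-counting of orderings, and the right-hand side becomes a Vafa--Intriligator-type sum over the $r$-element subsets of $\lc \lambda \in \bC \db[ q^{1/n} \db] \relmid \lambda^{n} = (-1)^{r-1} q \rc$.

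For the second step, recall that the quantum cohomology ring of the Grassmannian has the presentation
\begin{align*}
 \QH(\Gr(r,n)) \cong \bC \db[ q \db] [\sigma_1, \ldots, \sigma_r] / \lb h_{n-r+1}, \ldots, h_{n-1}, h_n - (-1)^{r-1} q \rb,
\end{align*}
a deformation of $H^*(\Gr(r,n)) \cong \bC[\sigma_1,\ldots,\sigma_r]/(h_{n-r+1},\ldots,h_n)$, whose spectrum after base change to $\overline{\bC(q)}$ is exactly the set of $\binom{n}{r}$ points at which $\sigma_k$ is sent to the $k$-th elementary symmetric function of an $r$-element subset of $\{ \lambda \mid \lambda^{n} = (-1)^{r-1} q \}$. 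On the other hand, Martin's formula \pref{th:Martin}, in the form recorded for $\Gr(r,n)$, computes $\int_{\Gr(r,n)}$ of a class as $\frac{1}{r!}\int_{(\bP^{n-1})^r}$ of its lift times $\prod_{i \neq j}(x_i - x_j)$; combining this with the residue formula for $\int_{\bP^{n-1}}$, the presentation \pref{eq:QHP} of $\QH(\bP^{n-1})$, and the fact that $P(\sigma_1(\bx), \ldots, \sigma_r(\bx))$ is a lift of $\mathring{P}(\sigma_1, \ldots, \sigma_r)$ once the K\"ahler parameter of each $\bP^{n-1}$ is specialized to $(-1)^{r-1} q$, one computes $\int_{\Gr(r,n)} \mathring{P}(\sigma_1, \ldots, \sigma_r)$ as the very same Vafa--Intriligator sum, and comparison with the first step proves the theorem. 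Equivalently, this second step is the Vafa--Intriligator formula for Grassmannians and could simply be quoted.

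The main obstacle is to make the second step rigorous: Martin's formula is an identity in ordinary cohomology, and one must justify its quantum counterpart --- that a lift of the \emph{quantum} class $\mathring{P}(\sigma)$ on $\Gr(r,n)$ is produced by $P(\sigma(\bx))$ computed in $\QH\lb(\bP^{n-1})^r\rb$ with the quantum parameters specialized, and that the normalization on the point class is correct. Here one must track the sign $(-1)^{r-1}$ carefully, so that the relation $x_i^{n} = (-1)^{r-1} q$ arising from the geometric summation on the abelianized side matches the quantum relation $h_n = (-1)^{r-1} q$ in $\QH(\Gr(r,n))$; this can be handled either by appealing to the abelian/non-abelian correspondence in quantum cohomology, or, more directly, by checking that the $\bC \db[ q \db]$-linear functional defined by the displayed residue of the first step annihilates the ideal of quantum relations and takes the value $1$ on the point class.
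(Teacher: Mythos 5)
Your proposal is correct and follows essentially the same route as the paper: rewrite the abelianized correlator as an iterated residue via \pref{eq:Pr_int}, sum the geometric series in each $d_i$ to place the poles at $x_i^n=(-1)^{r-1}q$, and identify the resulting sum over roots with $\int_{\Gr(r,n)}\mathring{P}$. The paper disposes of your \emph{second step} exactly as you suggest at the end of your second paragraph, by simply quoting the Vafa--Intriligator formula for Grassmannians (Siebert--Tian, Theorem 4.6), so the Martin's-formula/abelianization argument you flag as the main obstacle is not actually needed.
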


\begin{proof}
It follows from \pref{eq:Pr_int} that
\begin{align*}
 &\la P(\sigma_1,\ldots,\sigma_r) \ra_{\Gr(r,n)}^\ab \\
  &\quad = \frac{1}{r!} \sum_{d_1,\ldots,d_r=0}^\infty
   ((-1)^{r-1}q)^{d_1+\cdots+d_r}
   \Res \prod_{i \ne j} (x_i-x_j)
   P(\sigma_1,\ldots,\sigma_r)
   \frac{d x_1}{x_1^{n(d_1+1)}}
   \wedge \cdots \wedge
   \frac{d x_r}{x_r^{n(d_r+1)}} \\
  &\quad= \frac{1}{r!}
   \Res \prod_{i \ne j} (x_i-x_j)
   P(\sigma_1,\ldots,\sigma_r)
   \frac{d x_1}{x_1^n+(-1)^r q}
   \wedge \cdots \wedge
   \frac{d x_r}{x_r^n+(-1)^r q} \\
  &\quad= \frac{1}{r!n^r} \sum_{x_1^n=(-1)^{r-1}q} \cdots \sum_{x_r^n=(-1)^{r-1}q}
   \prod_{i \ne j} (x_i-x_j)
   P(\sigma_1(x_1,\ldots,x_r),\ldots,
   \sigma_r(x_1,\ldots,x_r))
    \label{eq:Gr_proof_1} \\
  &\quad= \int_{\Gr(r,n)} \mathring{P}(\sigma_1,\ldots,\sigma_r),   
\end{align*}
where the last equality is the Vafa-Intriligator formula
\cite[Theorem 4.6]{MR1621570}.
\end{proof}

\subsection{}

\pref{th:Gr} is related to intersection theory
on the moduli space of vector bundles
on a Riemann surface
through a theorem of Witten
\cite{MR1358625},
which states the existence
of a ring isomorphism
$\QH(\Gr(r,n))/(q-1) \simto R(U(r))_{n-r,n}$
from the quantum cohomology of $\Gr(r,n)$ at $q=1$ and
the Verlinde algebra of $U(r)$
at $\SU(r)$ level $n-r$ and $U(1)$ level $n$.

\subsection{} \label{sc:Gr1}

We define the $\bGm$-equivariant correlator of
$
 P(\sigma_1,\ldots,\sigma_r)
  \in \bC[\sigma_1,\ldots,\sigma_r]
$
by
\begin{align}
 \la P(\sigma_1,\ldots,\sigma_r) \ra_{\Gr(r,n)}^{\ab,\bGm}
  &\coloneqq \left.
  \sum_{\bd \in \bN^r} e^{\inner{\bd}{\bt}}
   \la P(\sigma_1,\ldots,\sigma_r) \ra_{\Gr(r,n),\bd}^{\ab, \bGm}
  \right|_{t_i = t + (r-1) \pi \sqrt{-1}}
\end{align}
where
\begin{multline}
 \la P(\sigma_1,\ldots,\sigma_r) \ra_{\Gr(r,n),\bd}^{\ab,\bGm}
  \coloneqq
   \int_{\AQ(\Gr(r,n);\bd)}^{\bGm}
   \prod_{1 \le i < j \le n} (x_i - x_j) (x_j - x_i + (d_j - d_i) \z) \\
   P(\sigma_1(x_1,\ldots,x_r),\ldots,\sigma_r(x_1,\ldots,x_r)).
\end{multline}

Since $\Q ((\bP^{n-1})^r;  \bd)^{\bG_m} = \prod_{i=1}^r \Q ( \bP^{n-1};  d_i)^{\bG_m} $ under $\Q ((\bP^{n-1})^r;  \bd) = \prod_{i=1}^r\Q ( \bP^{n-1};  d_i) $,
we have a straightforward generalization of \eqref{eqn:decom Phi}:
\begin{align} \label{eqn:prod Pn}
 \sum_{\bd \in \bN^r} e^{\inner{\bd}{\btau}}
  \la e^{\inner{(\bt-\btau)}{\bx}/z} \ra_{(\bP^{n-1})^r,\bd}^{\bGm}
  = \int_{(\bP^{n-1})^r} I_{(\bP^{n-1})^r}(\bt;\z) \cup I_{(\bP^{n-1})^r}(\btau;-\z),
\end{align}

By acting
$
 \cD_\bt \coloneqq \prod_{1 \le i<j \le r}
  \lb \z \partial_{t_i} - \z \partial_{t_j} \rb
$
and
$
 - \cD_\btau \coloneqq \prod_{1 \le i<j \le r}
  \lb - \z \partial_{\tau_i} + \z \partial_{\tau_j} \rb
$
on both sides of \eqref{eqn:prod Pn}
one obtains
\begin{align}
 \sum_{\bd \in \bN^r} e^{\inner{\bd}{\bt}}
 &\la
  \prod_{1 \le i < j \le r} \lb x_i - x_j \rb
  \prod_{1 \le i < j \le r} \lb (x_j + d_j \z) - (x_i + d_i \z) \rb
  \cdot e^{\inner{(\bt-\btau)}{\bx}/z}
 \ra_{(\bP^{n-1})^r}^{\bGm} \\
 &= \la e^{\inner{(\bt-\btau)}{\bx}/z} \ra_{\Gr(r,n)}^{\ab, \mathbb{G}_m} .
\end{align}
on the left hand side and
\begin{align}
 \int_{(\bP^{n-1})^r} \cD_{\bt} I_{(\bP^{n-1})^r}(\bt;\z)
  \cup (-\cD_{\btau}) I_{(\bP^{n-1})^r}(\btau;-\z)
\end{align}
on the right hand side.
By setting $t_i = t + (r-1) \pi \sqrt{-1}$,
$\tau_i = \tau + (r-1) \pi \sqrt{-1}$ and
using \pref{eq:HV_conj},
one obtains
\begin{align} \label{eq:Gr_factorization1}
 \la e^{(t-\tau) \sigma_1/\z} \ra_{\Gr(r,n)}^{\ab, \bGm}
  &= \frac{1}{r!} \int_{(\bP^{n-1})^r} \Delta \cup I_{\Gr(r,n)}(t;\z) \cup \Delta \cup I_{\Gr(r,n)}(\tau;-\z) \\
  &= \int_{\Gr(r,n)} I_{\Gr(r,n)}(t;\z) \cup I_{\Gr(r,n)}(\tau;-\z),
   \nonumber
\end{align}
where the last equality is Martin's formula \pref{eq:Martin_Gr}.
On the other hand,
localization with respect to the natural $\bGm$-action
on the domain curve gives the factorization
\begin{align} \label{eq:Gr_factorization2}
  \la e^{(t-\tau) \sigma_1/\z} \ra_{\Gr(r,n)}^{\bGm}
  &= \int_{\Gr(r,n)} I_{\Gr(r,n)}(t;\z) \cup I_{\Gr(r,n)}(\tau;-\z).
\end{align}
Together with \pref{eq:Gr_factorization1},
this gives the equality
\begin{align}
 \la e^{(t-\tau) \sigma_1/\z} \ra_{\Gr(r,n)}^{\ab , \bGm}
  = \la e^{(t-\tau) \sigma_1/\z} \ra_{\Gr(r,n)}^{\bGm}
\end{align}
of the abelianized correlator and the ordinary correlator.

For any $P(\bx) \in \bC[x_1,\ldots,x_r]^{\fS_r}$,
the same argument gives
\begin{align} \label{eq:Gr_ana}
 &\la P(\bx) e^{\inner{(\bt-\btau)}{\bx}/z} \ra_{\Gr(r,n)}^{\ab, \bGm} \\
  &\quad= \int_{\Gr(r,n)} \lb \sum_{\bd \in \bN^r}
   P(\bx+\bd \z) I_{\Gr(r,n),\bd}(\bt;\z) \rb \cup
   \lb \sum_{\bd \in \bN^r} I_{\Gr(r,n),\bd}(\btau;-\z) \rb
   \nonumber \\
  &\quad= \la P(\bx) e^{\inner{(\bt-\btau)}{\bx}/z} \ra_{\Gr(r,n)}^{\bGm}
   \nonumber
\end{align}
where $t_i = t + (r-1) \pi \sqrt{-1}$ and 
$\tau_i = \tau + (r-1) \pi \sqrt{-1}$.
By setting $t=\tau$ in \pref{eq:Gr_ana},
one obtains
\begin{align} \label{eq:Gr_ana2}
 \la P(\bx) \ra_{\Gr(r,n)}^{\ab ,\bGm}
  = \la P(\bx) \ra_{\Gr(r,n)}^{\bGm}.
\end{align}
Together with \pref{eq:Gr_ab},
this proves \pref{cj:main}
for Grassmannians.


\subsection{}

Let $Y \subset \Gr(r,n)$ be the zero locus of a general section
of a globally-generated vector bundle $\cV$ on $\Gr(r,n)$
associated with a representation $V$ of $\GL_r$.
%
%
We define the
\emph{abelianized $\bGm$-equivariant Morrison-Plesser class}
of $Y$ by
\begin{align}
 \Phiabz_\bd(Y;\z) \coloneqq
  \prod_{\bsdelta \in \Delta(V)} \prod_{l=1}^{\pair{\bsdelta}{\bd}}
    \lb \pair{\bsdelta}{\bx} + l \z \rb.
\end{align}
For
$
 P \in \bC[\sigma_1, \ldots, \sigma_r],
$
we set
\begin{align} \label{eq:Gr_ci_ab}
 \la P(\sigma_1, \ldots, \sigma_r) \ra_Y^{\ab,\bGm}
  \coloneqq \sum_{\bd \in \bN^r} q^{\abs{\bd}}
   \la
    P(\sigma_1, \ldots, \sigma_r)
    \Phiabz_\bd(Y;\z) v
   \ra_{\Gr(r,n),\bd}^{\ab,\bGm},
\end{align}
where
$
 v \coloneqq
  \prod_{\bsdelta \in \Delta(V)}
    \pair{\bsdelta}{\bx}
$
is the Euler class of the normal bundle of $Y$
in $\Gr(r,n)$.
By the same reasoning
as in \pref{sc:Gr1}
with the insertion of the abelizanized Morrison-Plesser class,
one obtains
\begin{align}
 \la P(\sigma_1, \ldots, \sigma_r) \ra_Y
  = (-1)^{|\Delta(V) |} \cor{P(\sigma_1, \ldots, \sigma_r)}.
\end{align}
Here,
the identification between $q$ and
the Fayet--Illiopoulos parameter $t'$
is given by 
\begin{align} \label{eq:tt'3}
 q = (-1)^{\sum_{\bsdelta \in \Delta(V)} \pair{\bsdelta}{\bfone}} e^{t'}
\end{align}
where
$
 \bfone \coloneqq (1, \cdots, 1) \in \mathbb{N}^r.
$

\subsection{}

As an example,
consider the vector bundle of rank 3
on $\Gr(3,5) = \Mat(3,5) \GIT U(3)$
associated with the representation of $U(3)$
determined by the Young diagram
\begin{align}
 \lambda = \Yvcentermath1 \tiny{\yng(2,2,1)}.
\end{align}
This vector bundle is the tensor product $\wedge^2 \cQ(1)$
of the second exterior power $\wedge^2 \cQ$
of the universal quotient bundle $\cQ$ on $\Gr(2,5) \cong \Gr(3,5)$
and the ample generator $\cO(1)$ of the Picard group.
One can immediately see from the Young diagram that
the restriction of the representation of $U(3)$
associated with $\lambda$
to the diagonal maximal torus $T \cong (\bGm)^3$
is the direct sum
$\rho_{1,2,2} \oplus \rho_{2,1,2} \oplus \rho_{2,2,1}$.
The associated line bundle
on the abelian quotient
$(\bP^4)^3$
is given by
$\cO(1,2,2) \oplus \cO(2,1,2) \oplus \cO(2,2,1)$.

The complete intersection in $\Gr(3,5)$
defined by $\wedge^2 \cQ(1)$
is a Calabi--Yau 3-fold of Picard number 1,
which will be denoted by $Y$ henceforth.
%
The Euler class of the normal bundle of $Y$ is
\begin{align}
 v
  \coloneqq (x_1+2x_2+2x_3)
   (2x_1+x_2+2x_3)
   (2x_1+2x_2+x_3),
\end{align}
the abelianized Morrison-Plesser class is
\begin{multline}
 \Phiab(Y;\bsd)
  \coloneqq (x_1+2x_2+2x_3)^{d_1+2d_2+2d_3} \\
   (2x_1+x_2+2x_3)^{2d_1+d_2+2d_3}
   (2x_1+2x_2+x_3)^{2d_1+2d_2+d_3},
\end{multline}
and the generating function for $\sigma_1^3$ is
\begin{align}
 \la \sigma_1^3 \ra_Y^\ab &=
  - \frac{1}{6}
  \sum_{d_1=0}^\infty \sum_{d_2=0}^\infty \sum_{d_3=0}^\infty
  q^{d_1+d_2+d_3} \Res (x_1+x_2+x_3)^3 \\
 &\qquad \qquad (x_1-x_2)^2(x_1-x_3)^2(x_2-x_3)^2
  \Phiab(Y; \bsd) v \frac{d x_1}{x_1^{n(d_1+1)}}
   \wedge \frac{d x_2}{x_2^{n(d_2+1)}}
   \wedge \frac{d x_3}{x_3^{n(d_3+1)}} \nonumber \\
 &= \frac{25(1-q)}{(1+q)(1-123q+q^2)}.
\end{align}
This matches the Yukawa coupling of the mirror
computed by Miura
\cite[\S 5.2]{thesis:Miura}.

\subsection{}

When $\cV$ is a direct sum of line bundles,
the mirror of $Y$ is constructed by toric degenerations
\cite{MR1619529,MR1756568}.
It is an interesting problem to compare
the generating function
\eqref{eq:Gr_ci}
with the Yukawa coupling of this mirror.

\section{Bethe/gauge correspondence}
 \label{sc:B/g}

\subsection{}
Let $V_1$ and $W_1$ be Hermitian vector spaces
of dimensions $r$ and $n$.
The unitary group $U(r)$
acts naturally on $V_1$ and trivially on $W_1$,
inducing an action on
$
 T^* \Hom(V_1, W_1)
  \cong \Hom(V_1, W_1) \oplus \Hom(W_1, V_1).
$
The real and complex moment maps for this action
are given by
\begin{align}
 \mu_\bR &\colon \Hom(W_1, V_1) \oplus \Hom(V_1, W_1)
  \to \End(V_1), \quad
 (i_1, j_1) \mapsto \frac{\sqrt{-1}}{2} \lb i_1 i_1^* - j_1^* j_1 \rb, \\
 \mu_\bC &\colon \Hom(W_1, V_1) \oplus \Hom(V_1, W_1)
  \to \End(V_1), \quad
 (i_1, j_1) \mapsto i_1 j_1.
\end{align}
If
$
 (i_1, j_1) \in \mu_\bR^{-1} \lb \zeta \sqrt{-1} \id_{V_1} \rb
$
for $\zeta < 0$,
then $j_1$ is injective.
If $(i_1, j_1) \in \mu_\bC^{-1}(0)$,
then $i_1$ descends to a map
$
 W_1 / \Image j_1 \to V_1.
$
It follows that the hyperK\"ahler quotient
is isomorphic to $T^* \Gr(r,n)$;
\begin{align}
 \left.
  \lb \zeta_\bR^{-1} \lb \zeta \sqrt{-1} \id_{V_1} \rb \cap \mu_\bC^{-1}(0) \rb
 \right/ U(r)
  \cong T^* \Gr(r,n).
\end{align}
This suggests that
the gauged linear sigma model
with the gauge group $U(r)$
and the reprensetation
$
 V \coloneqq \Hom(W_1, V_1) \oplus \Hom(V_1, W_1) \oplus \End(V_1)
$
describes the quantum cohomology of
$T^* \Gr(r,n)$.
Here $\End(V_1)$ is the Lagrange multiplier
for the complex moment map equation, and
the potential is given by
\begin{align}
 V \ni (i_1, j_1, P) \mapsto \tr(P i_1 j_1).
\end{align}
%
%
Let
$
 H \coloneqq H_1 \times H_2
$
be the product of
\begin{itemize}
 \item
the diagonal maximal torus $H_1$ of $U(n)$,
acting on $\Hom(W_1, V_1)$ and $\Hom(V_1, W_1)$
through the natural action on $W_1$,
and trivially on $\End(V_1)$, and
 \item
the group $H_2 = U(1)$
acting trivially on $\Hom(W_1, V_1)$,
by scalar multiplication on $\Hom(V_1, W_1)$,
and by inverse scalar multiplication on $\End(V_1)$.
\end{itemize}
One has
\begin{align}
 \Zvec_d(x) &= \prod_{1 \le i \ne j \le r} (x_i - x_j), \\
 \Zmat_d(x) &= \prod_{j=1}^n \prod_{i=1}^r (x_i - \lambda_j)^{-d_i-1} \\
   &\qquad \times \prod_{j=1}^n \prod_{i=1}^r (- x_i + \lambda_j - \mu)^{-(-d_i)-1} \\
   &\qquad \times \prod_{1 \le i \ne j \le r} (x_i-x_j+\mu)^{2-(d_i-d_j)-1},
\end{align}
so that the $H$-equivariant correlator of
$
 P
  \in
  \bC[x_1, \ldots, x_r]^{\fS_r}
$
is given by
\begin{multline}
  \cor{P}^H
 = \frac{1}{r!} \sum_{d_1=0}^\infty \cdots \sum_{d_r=0}^\infty
   ((-1)^{r-1} e^{t})^{d_1+\cdots+d_r} \\
  \Res \Bigg[
   \frac{\prod_{1 \le i \neq  j \le r  } (x_i-x_j)}
    {\prod_{1 \le i , j \le r} (x_i-x_j+\mu)^{(d_i-d_j-1)}} \\
   \frac{\prod_{j=1}^n \prod_{i=1}^r (-x_i+\lambda_j-\mu)^{d_i-1}}
    {\prod_{j=1}^n \prod_{i=1}^r (x_i-\lambda_j)^{d_i+1}}
   P d x_1 \wedge \cdots \wedge d x_r \Bigg],
   \label{eq:gbcor}
\end{multline}
where
$
 \Res
$
denotes the sum of residues at the points
where $x_i$ is one of $\lambda_j$
for $i=1, \ldots, r$ and $j = 1, \ldots, n$
(there are $n^r$ such points).
This can formally be regarded
as an equivariant integration over the projective space
of dimension $\sum_{i=1}^r (d_i+1) - 1$,
and it is an interesting problem to give a geometric interpretation.

The effective potential \pref{eq:Weff}
of this gauged linear sigma model
is given by
\begin{align}
 \Weff(\bx;t)
  &= \WFI(\bx;t') + \Wvec(\bx) + \Wmat(\bx), \\
 \WFI(\bx;t)
  &= t (x_1+\cdots+x_r), \\
 \Wvec(\bx)
  &= - \pi \sqrt{-1} \sum_{1 \le i < j \le r} (x_j - x_i) \\
  &= - \pi \sqrt{-1} \sum_{i=1}^r (2 i - r -1) x_i,
  \nonumber \\
 \Wmat(\bx) &=
   - \sum_{i=1}^r \sum_{j=1}^n
  \lb x_i - \lambda_j \rb \lb \log \lb x_i - \lambda_j \rb - 1 \rb \\
  &\qquad \qquad \qquad - \sum_{i=1}^r \sum_{j=1}^n
  \lb - x_i + \lambda_j - \mu \rb \lb \log \lb - x_i + \lambda_j - \mu \rb - 1 \rb
  \nonumber \\
  &\qquad \qquad \qquad - \sum_{i=1}^r \sum_{j=1}^r
  \lb x_i - x_j + \mu \rb \lb \log \lb x_i - x_j + \mu \rb - 1 \rb,
  \nonumber
\end{align}
where $\lambda_j$ and $\mu$ are
equivariant parameters for the actions
of $H_1$ and $H_2$ respectively.
Note that
\begin{align}
 e^{{\partial \Weff}/{\partial x_i}}
  &= e^t \cdot (-1)^{2i-r-1} \cdot \prod_{j=1}^n (x_i-\lambda_j)^{-1}
   \prod_{j=1}^n (-x_i + \lambda_j - \mu)
   \prod_{j \ne i} \frac{x_j-x_i+\mu}{x_i-x_j+\mu} \\
  &= e^{t+n \pi \sqrt{-1}} \prod_{j=1}^n \frac{x_i - \lambda_j + \mu}{x_i-\lambda_j}
   \prod_{j \ne i} \frac{x_i-x_j-\mu}{x_i-x_j+\mu},
\end{align}
so that the equations
$
 e^{\partial_{x_i} \Weff} = 1,
$
$
 i=1,\ldots,r
$
gives
\begin{align} \label{eq:vacuum}
 \prod_{j=1}^n \frac{x_i -\lambda_j}{x_i - \lambda_j + \mu}
  = e^{t + n \pi \sqrt{-1}} \prod_{j \ne i} \frac{x_i-x_j-\mu}{x_i-x_j+\mu}.
\end{align}
By taking the sum over $d_i$
just as in the proof of \pref{cr:IV1},
one obtains
\begin{multline}
 \cor{P}^H
 = \frac{1}{r!}
  \Res \Bigg[
   \frac{1}{\prod_{i=1}^r \lb \lb 1-e^{\partial_{x_i} \Weff} \rb
    \prod_{j=1}^n \lb x_i-\lambda_j \rb \rb}
  \\
   \frac{\prod_{1 \le i \neq  j \le r  } (x_i-x_j)\prod_{1 \le i , j \le r} (x_i-x_j+\mu)}
    {\prod_{i=1}^r \prod_{j=1}^n (-x_i+\lambda_j-\mu)}
  P d x_1 \wedge \cdots \wedge d x_r \Bigg]
 \label{eq:gauge_bethe1}
\end{multline}
where
$
 \Res
$
denotes the sum of residues at the roots of the equations
\pref{eq:vacuum}.

\subsection{}

The Heisenberg model,
also known as the homogeneous $X\!X\!X_{\frac{1}{2}}$ model,
is the $\SU(2)$ spin chain model
with Hamiltonian
\begin{align}
 H
  = \sum_{i=1}^n \bsS_i \cdot \bsS_{i+1},
\end{align}
where
$
 \bsS_i
  = (S_i^x, S_i^y, S_i^z)
  = (\sigma_i^x/2, \sigma_i^y/2, \sigma_i^z/2)
$
are halves of Pauli matrices
acting on the $i$-th factor of the Hilbert space
$
 \cH \coloneqq (\bC^2)^{\otimes n}
$
and
\begin{align}
 \bsS_i \cdot \bsS_{i+1}
  \coloneqq S_i^x S_{i+1}^x + S_i^y S_{i+1}^y + S_i^z S_{i+1}^z.
\end{align}
The total spin
\begin{align}
 S^z \coloneqq \sum_{i=1}^n S_i^z
\end{align}
clearly commutes with the Hamiltonian,
and we restrict to the $S^z$-eigenspace $\cH_r \subset \cH$
with eigenvalue $(-n+r)/2$.
We impose the quasi-periodicity condition
\begin{align}
 \bsS_{n+1}
  = e^{\sqrt{-1} \vartheta S_1^z}
   \bsS_1 
   e^{-\sqrt{-1} \vartheta S_1^z}.
\end{align}
Introduce variables $\bx = (x_1,\ldots,x_r)$
related to quasi-momenta $\bp = (p_1,\ldots,p_r)$
by
\begin{align}
 e^{\sqrt{-1} p_i}
  = \frac{x_i+\frac{\sqrt{-1}}{2}}{x_i-\frac{\sqrt{-1}}{2}}.
\end{align}
Then $H$-eigenspaces in $\cH_r$
correspond bijectively to solutions of the \emph{Bethe equation}
\begin{align} \label{eq:Bethe}
 \lb \frac{x_i+\frac{\sqrt{-1}}{2}}{x_i-\frac{\sqrt{-1}}{2}} \rb^n
  = e^{\sqrt{-1} \vartheta} \prod_{j \ne i} \frac
  {x_i-x_j+\sqrt{-1}}{x_i-x_j-\sqrt{-1}}
\end{align}
with eigenavlues
$
 n - 2 r + 2 \sum_{i=1}^r \cos p_i.
$
The integrability comes from factorization of many-body S-matrix
into the product of the 2-body S-matrix
given by
\begin{align}
 S(p_i, p_j) = 1 - 2 e^{\sqrt{-1} p_j} + e^{\sqrt{-1} (p_i+p_j)}.
\end{align}
See e.g. \cite{MR2886419} and references therein
for Bethe ansatz for the quasi-periodic Heisenberg model.
The Bethe equation \eqref{eq:Bethe} coincides
with \eqref{eq:vacuum}
under
$
 \lambda_j = \frac{\sqrt{-1}}{2},
$
$
 j=1,\ldots,n,
$
$
 \mu = - \sqrt{-1},
$
and
$
 \vartheta = - \sqrt{-1}  t + n/2.
$
This observation and its generalizations
is called  \emph{Bethe/gauge correspondence}
\cite{MR2570974}.
The relation between classical/quantum cohomology of Grassmannians
and integrable systems is studied
in \cite{MR2782198,1211.1287,MR3118573,1512.07363}.

\section{Quasimaps and instantons}
 \label{sc:instanton}

\begin{figure}
\centering
\begin{align*} 
\begin{psmatrix}
 \\[-5mm]
 V_1 & V_2 & \ \cdots \ & V_n & V_1 \\
 W_1 & W_2 & \ \cdots \  & W_n & W_1
\end{psmatrix}
\psset{shortput=nab,nodesep=2mm,arrows=->}
\ncline{2,1}{2,2}^{B_1}
\ncline{2,2}{2,3}^{B_1}
\ncline{2,3}{2,4}^{B_1}
\ncline{2,4}{2,5}^{B_1}
\nccircle{->}{2,1}{5mm}_{B_2}
\nccircle{->}{2,2}{5mm}_{B_2}
\nccircle[nodesep=0.5mm]{->}{2,4}{5mm}_{B_2}
\nccircle{->}{2,5}{5mm}_{B_2}
\ncline{3,1}{2,1}^{a}
\ncline{3,2}{2,2}^{a}
\ncline{3,4}{2,4}^{a}
\ncline{3,5}{2,5}^{a}
\ncline{2,1}{3,2}_{b}
\ncline{2,2}{3,3}_{b}
\ncline{2,3}{3,4}_{b}
\ncline{2,4}{3,5}_{b}
\ncline{2,4}{3,5}_{b}
\end{align*}
\caption{The chainsaw quiver}
\label{fg:chainsaw}

\begin{align*}
\begin{psmatrix}
 \\[-5mm]
 V_1 & V_2 & \ \cdots \ & V_{n-1} \\
 W_1 & W_2 & \ \cdots \  & W_{n-1} & W_n
\end{psmatrix}
\psset{shortput=nab,nodesep=2mm,arrows=->}
\ncline{2,1}{2,2}^{B_1}
\ncline{2,2}{2,3}^{B_1}
\ncline{2,3}{2,4}^{B_1}
\nccircle{->}{2,1}{5mm}_{B_2}
\nccircle{->}{2,2}{5mm}_{B_2}
\nccircle[nodesep=0.5mm]{->}{2,4}{5mm}_{B_2}
\ncline{3,1}{2,1}^{a}
\ncline{3,2}{2,2}^{a}
\ncline{3,4}{2,4}^{a}
\ncline{2,1}{3,2}_{b}
\ncline{2,2}{3,3}_{b}
\ncline{2,3}{3,4}_{b}
\ncline{2,4}{3,5}_{b}
\end{align*}
\caption{The handsaw quiver}
\label{fg:handsaw}
\end{figure}

\subsection{}

As explained in \cite[Section 2.3]{MR3161283},
the moduli space of framed instantons
on $\bC \times \ld \bC/(\bZ/n \bZ) \rd$
is isomorphic to the Nakajima quiver variety
associated with the \emph{chainsaw quiver}
shown in \pref{fg:chainsaw}.

\subsection{}

Representations of the chainsaw quiver
satisfying $\dim V_n = 0$
are in one-to-one correspondence
with representations of the \emph{handsaw quiver}
shown in \pref{fg:handsaw}.
It is shown in \cite[Section 2.3]{MR3161283}
(see also
\cite[Section 3]{MR3024827}
for an exposition)
that the Nakajima quiver variety
associated with the handsaw quiver
is isomorphic to the \emph{parabolic Laumon space}
parametrizing flags
\begin{align}
 0 = E_0 \subset E_1 \subset \cdots \subset E_{n-1}
  \subset E_n = W \otimes_\bC \cO_{\bP^1}
\end{align}
of locally free sheaves on $\bP^1$
such that
$
 \rank E_i = \sum_{j \le i} \dim W_j,
$
$
 \deg E_i = - \dim V_i,
$
and the flag at $\infty \in \bP^1$
is equal to the standard flag
$
 0
  \subset W_1
  \subset W_1 \oplus W_2
  \subset \cdots
  \subset W_1 \oplus W_2 \oplus \cdots \oplus W_{n-1}
  \subset W.
$
This coincides with the space of \emph{based quasimaps}
to partial flag varieties,
i.e., quasimaps with specified value at infinity.

\section{Quasimaps and monopoles}
 \label{sc:monopole}


\subsection{}

Let $G$ be a compact Lie group
with a maximal torus $H$.
A monopole on $\bR^3$ is a pair
$(A, \Phi)$ of a connection $A$
on a principal $G$-bundle $P$
and a section $\Phi$ of $P \times_G \frakg$
satisfying the \emph{Bogomolny equation}
\begin{align}
 F_A = * d_A \Phi.
\end{align}
In order for the curvature
to have a finite $L^2$-norm,
it is natural to demand
that the restriction of $\Phi$
to a sphere with large radius
tends to a map to a fixed adjoint orbit
$\cO \cong G / H \cong G_\bC/P$.
The homotopy class $\bsk \in \pi_2(G_\bC/P)$
of the resulting map is called the \emph{charge}
of the monopole.

\subsection{}

A choice of a gauge
satisfying a certain boundary condition at infinity
is called a \emph{framing} of the monopole.
The framed moduli space is a principal $H$-bundle
over the unframed moduli space.
The framed moduli space
has a natural hyperK\"ahler structure
coming from the dimensional reduction
of the anti-self-dual equation in dimension 4.

\subsection{}

Monopoles on $\bR^3$ are related to
\begin{enumerate}
 \item \label{it:monopole_SC}
spectral curves on $T \bP^1$,
 \item \label{it:monopole_Nahm}
Nahm's equation
\begin{align}
 \frac{d T_i}{ds} = \epsilon_{ijk} [T_j, T_k], \quad i=1,2,3
\end{align}
for
$
 T_i \in C^\infty((0,2),\Mat(k,k;\bC)),
$
and
 \item \label{it:monopole_RM}
based quasimaps 
from $\bP^1$ to $G_\bC/P$
of degree $\bsk$.
\end{enumerate}
\pref{it:monopole_SC}
comes from the twistor correspondence
\cite{MR649818,MR709461}, and
\pref{it:monopole_Nahm}
comes from Nahm transform
\cite{MR766754}.
\pref{it:monopole_RM}
is proved for $\SU(2)$
in \cite{MR769355},
and the general case can be found
in \cite{MR1625475,MR1625471}
and references therein.

%

\section{Quasimaps and vortices}
 \label{sc:vortex}

\subsection{}

Let $X$ be a K\"ahler manifold,
$(E, h)$ be a Hermitian vector bundle on $X$,
and $\tau$ be a positive real number.
The Yang--Mills--Higgs functional
sends a pair $(A, \phi)$
of a unitary connection $d_A$ of $(E, h)$
and a section $\phi$ of $E$ to
\begin{align} \label{eq:YMH}
 \YMH(A, \phi)
  = \norm[L^2]{F_A}^2
    + \norm[L^2]{d_A \phi}^2
    + \frac{1}{4} \norm[L^2]{\phi \otimes \phi^* - \tau}^2.
\end{align}
By \cite[Proposition 2.1]{MR1086749},
one has
\begin{multline} \label{eq:YMH2}
 \YMH(A, \phi)
 = 4 \norm[L^2]{F^{0,2}}^2
  + 2 \norm[L^2]{\partialbar_A \phi}^2
  + \norm[L^2]{\sqrt{-1} \Lambda F
  + \frac{1}{2} \phi \otimes \phi^*
  - \frac{\tau}{2}}^2 \\
  + \tau \int_X \sqrt{-1} \tr F \wedge \omega^{[n-1]}
  + \int_X \tr F \wedge F \wedge \omega^{[n-2]}.
\end{multline}
where
$
 \omega^{[k]} \coloneqq \omega^k /(k!)
$
and $\Lambda$ is the dual Lefschetz operator.

\subsection{}

Assume that $X$ is a projective curve,
so that
\begin{align}
 \deg(E) &= \frac{\sqrt{-1}}{2 \pi} \tr F.
\end{align}
Then \pref{eq:YMH2} immediately implies
the Bogomolny--Prasad--Sommerfield inequality
\begin{align} \label{eq:BPS_bound}
 \YMH(A, \phi) \ge 2 \pi \tau \deg(E),
\end{align}
and the equality holds
if and only if the \emph{vortex equation}
\begin{align}
 F^{0,2} &= 0,
  \label{eq:vortex_eq1} \\
 \partialbar_A \phi &= 0,
  \label{eq:vortex_eq2} \\
 - \sqrt{-1} \Lambda F &= \frac{1}{2} \lb \phi \otimes \phi^* - \tau \id_E \rb
  \label{eq:vortex_eq3}
\end{align}
is satisfied.
\pref{eq:vortex_eq1} and
\pref{eq:vortex_eq2} are
holomorphicities for $E$ and $\phi$, and
\pref{eq:vortex_eq3} is a generalization
of the constant central curvature equation.

\subsection{}

By taking the trace of \pref{eq:vortex_eq3} and
integrating over $X$,
one obtains
\begin{align}
 - 2 \pi \deg(E)
  = \frac{1}{2} \norm[L^2]{\phi}^2
   - \frac{1}{2} \tau \rank(E) \vol(X),
\end{align}
so that the condition
\begin{align}
 \tau \ge \frac{4 \pi \deg(E)}{\rank(E) \vol(X)}
\end{align}
is necessary for \pref{eq:vortex_eq3}
to have a solution.

\subsection{}

The \emph{slope} of a holomorphic vector bundle $E$ is defined by
\begin{align}
 \mu(E) = \frac{\deg(E)}{\rank(E)}.
\end{align}
For a holomorphic section $\phi$ of $E$,
we set
\begin{align*}
 \muhat(E) &\coloneqq \sup \lc \mu(E') \relmid
  \text{$E'$ is a reflexive subsheaf of $E$ of rank less than $E$} \rc, \\
 \mu_M(E) &\coloneqq \max \lc \muhat(E), \mu(E) \rc, \\
 \mu_m(E, \phi) &\coloneqq \inf \bigg\{
  \frac{\rank (E) \mu(E) - \rank(E') \mu(E')}{\rank(E)-\rank(E')} \, \bigg| \\
  & \qquad \text{$E'$ is a reflexive subsheaf of $E$ such that
$\rank E' < \rank E$ and $\phi \in \Gamma(E')$} \bigg\}.
\end{align*}
A pair $(E, \phi)$ of a holomorphic vector bundle $E$
and its holomorphic section $\phi$ is said to be \emph{stable} if
\begin{align}
 \mu_M(E) < \mu_m (E,\phi).
\end{align}

\begin{theorem}[{\cite[Theorem 2.1.6]{MR1085139}}]
Let $(E, \phi)$ be a pair of a holomorphic vector bundle
and its holomorphic section.
If there exists a Hermitian metric on $E$
satisfying the vortex equation,
then one has either of the following:
\begin{enumerate}[(i)]
 \item \label{it:Bradlow_stable}
$(E, \phi)$ is stable and satisfies
\begin{align} \label{eq:tau-bound}
 \mu_M < \frac{\tau \Vol(X)}{4 \pi} < \mu_m(\phi).
\end{align}
 \item
$E$ has a direct sum decomposition
$E = E_\phi \oplus E'$,
$\phi$ is an element of $H^0(E_\phi) \subset H^0(E)$,
$(E_\phi, \phi)$ satisfies
(i)
above, and 
$E'$ is the direct sum of stable vector bundles
of slope $\tau \Vol(X)/4 \pi$.
\end{enumerate}
\end{theorem}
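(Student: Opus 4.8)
The theorem is the Hitchin--Kobayashi correspondence for vortices on a curve, and the plan is to run the standard argument that reads Chern--Weil slope inequalities off the vortex equation \pref{eq:vortex_eq3}. Write $\tau'\coloneqq\tau\,\Vol(X)/(4\pi)$ and assume $\phi\neq 0$; when $\phi=0$ the statement is the Hermitian--Einstein theorem on a curve, giving polystability with all summands of slope $\tau'$ and $E_\phi=0$. Since on a smooth projective curve every reflexive subsheaf $E'\subset E$ is contained in its saturation, a subbundle of the same rank and degree at least $\deg E'$, it suffices to test holomorphic subbundles throughout: passing to the saturation only enlarges $\muhat(E)$ and only shrinks the quantities defining $\mu_m(E,\phi)$.

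The key step is the fundamental inequality. For a holomorphic subbundle $0\subsetneq E'\subsetneq E$ with orthogonal projection $\pi\colon E\to E'$, the Gauss--Codazzi identity together with $\deg E'=\frac{1}{2\pi}\int_X\tr\bigl(\sqrt{-1}\Lambda F_{E'}\bigr)\,\mathrm{dvol}$ gives
\begin{align}
 \deg E' = \frac{1}{2\pi}\int_X \tr\bigl(\pi\,\sqrt{-1}\Lambda F_h\bigr)\,\mathrm{dvol}
   - \frac{1}{2\pi}\,\bigl\|\bar\partial_E\pi\bigr\|_{L^2}^2 .
\end{align}
Substituting the vortex equation in the form $\sqrt{-1}\Lambda F_h=\tfrac12(\tau\,\id_E-\phi\otimes\phi^{*})$ and using $\tr(\pi(\phi\otimes\phi^{*}))=\langle\pi\phi,\pi\phi\rangle=|\pi\phi|^{2}$ yields
\begin{align} \label{eq:vortex_CW_sketch}
 \mu(E') = \tau' - \frac{\|\pi\phi\|_{L^2}^2}{4\pi\,\rank E'}
   - \frac{\|\bar\partial_E\pi\|_{L^2}^2}{2\pi\,\rank E'}
 \;\le\; \tau' - \frac{\|\pi\phi\|_{L^2}^2}{4\pi\,\rank E'} ,
\end{align}
with equality in the last step precisely when $\bar\partial_E\pi=0$; the case $E'=E$ gives the exact identity $\mu(E)=\tau'-\|\phi\|_{L^2}^2/(4\pi\,\rank E)<\tau'$.

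From \pref{eq:vortex_CW_sketch} the two one-sided bounds follow at once. If $\phi\in H^{0}(E')$ then $\pi\phi=\phi$, so $\mu(E')<\tau'$, and subtracting the identity for $\mu(E)$ shows the quotient slope $(\rank E\,\mu(E)-\rank E'\,\mu(E'))/(\rank E-\rank E')$ equals $\tau'+\|\bar\partial_E\pi\|_{L^2}^2/(2\pi(\rank E-\rank E'))\ge\tau'$; hence $\mu_m(E,\phi)\ge\tau'$. For an arbitrary proper $E'$, \pref{eq:vortex_CW_sketch} gives $\mu(E')\le\tau'$, so $\muhat(E)\le\tau'$, and with $\mu(E)<\tau'$ we get $\mu_M(E)\le\tau'$. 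If all these inequalities are strict, then $\mu_M(E)<\tau'<\mu_m(E,\phi)$ and we are in case (i), with \pref{eq:tau-bound} reading $\mu_M<\tau'<\mu_m(\phi)$. Otherwise equality is attained for some proper subbundle $E'$ --- either $\muhat(E)=\tau'$ via an $E'$ with $\phi\notin H^0(E')$, or $\mu_m(E,\phi)=\tau'$ via an $E'$ with $\phi\in H^0(E')$. In both situations the equality clause of \pref{eq:vortex_CW_sketch} (respectively of the quotient-slope identity above) forces $\bar\partial_E\pi=0$, so $\pi$ is a holomorphic self-adjoint idempotent and $E=E'\oplus E''$ is simultaneously an orthogonal and a holomorphic splitting; moreover the vanishing of the surviving correction term puts $\phi$ entirely into one summand, say $\phi\in H^0(E_\phi)$ with $E_\phi\in\{E',E''\}$. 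The vortex equation then decouples along the splitting: on the summand complementary to $E_\phi$ it reads $\sqrt{-1}\Lambda F=\tfrac{\tau}{2}\,\id$, a Hermitian--Einstein metric of slope $\tau'$, and on $E_\phi$ it is again the vortex equation with section $\phi$ and the same $\tau$. Inducting on $\rank E$: the Hermitian--Einstein case of the Hitchin--Kobayashi correspondence (Narasimhan--Seshadri on a curve) decomposes the complementary bundle into stable bundles of slope $\tau'$, and after finitely many such splittings the pair carrying $\phi$ lands in case (i); assembling the pieces gives the decomposition of case (ii).

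The part I expect to take the most care is the equality analysis in \pref{eq:vortex_CW_sketch}: one must check that a subbundle saturating the relevant inequality is automatically $\bar\partial_E$-parallel, that $\phi$ is then carried by a single summand, and that the resulting induction on rank is correctly primed and terminates (the rank of $E_\phi$ strictly drops at each splitting). The remaining inputs --- the Gauss--Codazzi form of the Chern--Weil identity, elementary here since $\pi$ is smooth on a curve, and the Hermitian--Einstein case of the Hitchin--Kobayashi correspondence --- are classical, so no new analysis is needed beyond the pointwise identity already packaged in \pref{eq:YMH2}.
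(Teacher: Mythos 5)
This theorem is quoted by the paper from Bradlow (the cited Theorem 2.1.6); the paper supplies no proof of its own, so there is nothing internal to compare against. Your argument is the standard one and is, in substance, Bradlow's: the Gauss--Codazzi/Chern--Weil identity for the orthogonal projection onto a saturated subsheaf, substitution of the vortex equation to get $\mu(E')\le \tau\Vol(X)/(4\pi)$ with defect terms $\|\pi\phi\|_{L^2}^2$ and $\|\bar\partial_E\pi\|_{L^2}^2$, and the equality analysis producing a holomorphic orthogonal splitting along which the equation decouples into a Hermitian--Einstein piece and a lower-rank vortex pair. The only points worth tightening are routine: on a curve the supremum defining $\hat\mu(E)$ and the infimum defining $\mu_m(E,\phi)$ are actually attained (integrality and boundedness of subbundle degrees), which is what licenses your dichotomy ``all inequalities strict or equality at some subbundle''; and the degenerate cases $\phi=0$ and $\operatorname{rank}E_\phi=1$ should be stated as the base of the induction. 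Neither affects the correctness of the argument.
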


\begin{theorem}[{\cite[Theorem 3.1.1]{MR1085139}}]
Let $(E, \phi)$ be a stable pair of a holomorphic vector bundle
and its holomorphic section.
Then for any real number $\tau$ satisfying
\pref{eq:tau-bound},
there exists a Hermitian metric on $E$
satisfying \pref{eq:vortex_eq3}.
\end{theorem}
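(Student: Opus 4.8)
The plan is to follow the Donaldson--Uhlenbeck--Yau strategy for the Hitchin--Kobayashi correspondence, adapted to the pair $(E,\phi)$. Fix a smooth background Hermitian metric $h_0$ on $E$ and look for the desired metric in the form $h = h_0 e^{-s}$, where $s$ is a section of $\End(E)$ that is self-adjoint with respect to $h_0$. In these terms \pref{eq:vortex_eq3} becomes a single second order nonlinear elliptic equation
\[
 \sqrt{-1}\, \Lambda F_h + \tfrac{1}{2}\, \phi \otimes \phi^{\ast_h}
  = \tfrac{\tau}{2}\, \id_E ,
\]
whose linearization at $s=0$ is the Hodge Laplacian on $\End(E)$ plus a nonnegative zeroth order operator coming from the section term; in particular it is invertible modulo constants, and the section term only makes it more coercive.

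First I would set up a continuity path interpolating between an equation that is trivially solvable (for instance by starting from a very small value of $\tau$, where the section term dominates) and the equation above, and show that the set of parameters for which it is solvable is open, using the implicit function theorem together with the invertibility just noted. Openness is soft; the difficulty is closedness, i.e.\ a priori estimates. Equivalently --- and this is probably the more robust route --- one runs the negative gradient flow of the Yang--Mills--Higgs functional \pref{eq:YMH}, namely the parabolic evolution $h^{-1}\partial_t h = -2\bigl(\sqrt{-1}\Lambda F_h + \tfrac{1}{2}\phi\otimes\phi^{\ast_h} - \tfrac{\tau}{2}\id_E\bigr)$, whose short-time existence follows from parabolicity and whose fixed points are exactly the solutions of \pref{eq:vortex_eq3}.

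The key analytic input is a $C^0$ bound on the distortion $\log\tr(h_0^{-1}h) + \log\tr(h^{-1}h_0)$ along the flow (or along the continuity path). Integrating \pref{eq:vortex_eq3} over $X$ produces, as in the computation preceding the statement, an $L^1$ bound in terms of $\deg E$ and $\Vol(X)$; this is exactly the point at which the hypothesis \pref{eq:tau-bound} enters, since it forces the relevant slopes to straddle $\tau\Vol(X)/(4\pi)$ and thereby controls the degrees of any destabilizing subsheaf produced in a limit. Combining a Weitzenb\"ock/maximum principle argument for the evolution of $\tr(h_0^{-1}h)$ with this $L^1$ bound upgrades it to an $L^\infty$ bound in the manner of Donaldson's argument on Riemann surfaces; parabolic Schauder estimates then give all higher derivative bounds and hence long-time existence.

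The main obstacle --- and the only place where \emph{stability} of $(E,\phi)$ is used --- is showing that the flow converges rather than merely subconverging with $s_t$ drifting to infinity. Here one argues by contradiction in the style of Uhlenbeck--Yau: assuming $\sup_X|s_t|\to\infty$, rescale $u_t = s_t/\|s_t\|_{L^1}$, extract a weak $L^2_1$ limit $u_\infty \ne 0$, and use monotonicity of the functional together with a Fatou-type argument on the eigenvalue decomposition of $u_\infty$ to conclude that $u_\infty$ is self-adjoint, has eigenvalues constant on $X$, and is \emph{weakly holomorphic}. By the Uhlenbeck--Yau regularity theorem its eigenbundles are coherent reflexive subsheaves of $E$; tracking whether $\phi$ lies in the relevant subsheaf and evaluating the limiting identity yields $\mu_M(E) \ge \tau\Vol(X)/(4\pi)$ or $\mu_m(E,\phi) \le \tau\Vol(X)/(4\pi)$, contradicting the stability of $(E,\phi)$ together with \pref{eq:tau-bound}. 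Hence the flow converges, and its limit is a Hermitian metric on $E$ satisfying \pref{eq:vortex_eq3}. An alternative to this entire analytic argument is the dimensional reduction of Garc\'{\i}a-Prada, which identifies $\tau$-vortices on $X$ with $\SU(2)$-invariant Hermitian--Einstein metrics on a bundle over $X \times \bP^1$ and deduces the theorem directly from the Donaldson--Uhlenbeck--Yau theorem.
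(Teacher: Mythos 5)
This theorem is quoted in the paper from Bradlow \cite{MR1085139} without proof, so there is no in-paper argument to compare against; what you have written is a sketch of how the result is actually established in the literature, and as such it is on target. Your outline is essentially Bradlow's own strategy, which adapts the Donaldson--Uhlenbeck--Yau method to the pair $(E,\phi)$: reduce \pref{eq:vortex_eq3} to a nonlinear elliptic equation for the endomorphism relating $h$ to a background metric, run a continuity path (or the Yang--Mills--Higgs gradient flow), obtain the $C^0$ estimate, and, when the estimate fails, extract a weakly holomorphic projection whose Uhlenbeck--Yau regularization produces a reflexive subsheaf violating either $\mu_M(E)<\tau\Vol(X)/4\pi$ or $\tau\Vol(X)/4\pi<\mu_m(E,\phi)$ --- and you correctly identify the one genuinely new bookkeeping point in the pair setting, namely that one must track whether $\phi$ lies in the destabilizing subsheaf to decide which of the two inequalities in \pref{eq:tau-bound} is contradicted. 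The mention of Garc\'{\i}a-Prada's dimensional reduction (realizing $\tau$-vortices as $\SU(2)$-invariant Hermitian--Einstein metrics on a bundle over $X\times\bP^1$ and invoking Donaldson--Uhlenbeck--Yau) is a legitimate and genuinely different second route. Be aware, though, that this is a sketch and not a proof: the heavy lifting --- the maximum-principle upgrade from the $L^1$ to the $L^\infty$ bound, the construction and regularity of the weakly holomorphic limit, and the verification that the limiting identity really yields the slope inequalities --- is exactly the content of Bradlow's paper and is only named, not carried out, here. One small caution: your claim that the linearization is ``invertible modulo constants'' with the section term ``only making it more coercive'' should be stated more carefully; the point is rather that for $\phi\neq 0$ the zeroth-order term coming from $\phi\otimes\phi^{\ast_h}$ kills the constant multiples of the identity in the kernel, which is what makes openness work without quotienting.
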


Bradlow proved these results
not only for projective curves
but also for compact K\"ahler manifolds.

\subsection{}

Vortex equation \pref{eq:vortex_eq3} admits
the following generalization,
which also contains Hitchin's self-duality equation
\cite{MR887284} as a special case.
Let
$
 Q=(Q_0, Q_1, s, t)
$
be a quiver and
$M=(M_a)_{a \in Q_1}$
be a collection of vector bundles on $X$
labeled by $Q_1$.
An 
\emph{$M$-twisted $Q$-sheaf on $X$}
is a pair
$
 R=\lb \lb E_v \rb_{v \in Q_0}, \lb \phi_a \rb_{a \in Q_1} \rb
$
of a collection
$(E_v)_{v \in Q_0}$
of vector bundles
labeled by $Q_0$
and a collection
\begin{align}
 \lb \phi_a \rb_{a \in Q_1}
  \in \prod_{a \in Q_1} \Hom \lb E_{s(a)} \otimes M_a, E_{t(a)} \rb
\end{align}
of morphisms labeled by $Q_1$.

Given a collection $(E_v)_{v \in Q_0}$
of holomorphic vector bundles
on a K\"ahler manifold $X$,
another collection
$(M_a)_{a \in Q_1}$
of holomorphic vector bundles on $X$,
a collection
$\sigma = (\sigma_v)_{v \in Q_0}$
of positive real numbers,
and a collection
$\tau = (\tau_v)_{v \in Q_0}$
of real numbers,
the equation
\begin{align} \label{eq:Q_vortex}
 \sigma_v \sqrt{-1} \Lambda F_v
  + \sum_{t(a)=v} \phi_a \circ \phi_a^*
  - \sum_{s(a)=v} \phi_a^* \circ \phi_a
  = \tau_v \id_{E_v}
\end{align}
for Hermitian metrics
on $(E_v)_{v \in Q_0}$
is called the
\emph{$M$-twisted quiver $(\sigma, \tau)$-vortex equation}.

The \emph{$(\sigma, \tau)$-degree} and
the \emph{$(\sigma, \tau)$-slope}
of an $M$-twisted $Q$-sheaf $R$ is defined by
\begin{align}
 \deg_{\sigma, \tau}(R)
  &= \sum_{v \in Q_0} \lb \sigma_v \deg E_v - \tau_v \rank E_v \rb, \\
 \mu_{\sigma, \tau}(R)
  &= \frac{\deg_{\sigma, \tau}(R)}
   {\sum_{v \in Q_0} \sigma_v \rank E_v}.
\end{align}
A $Q$-sheaf is \emph{stable}
if one has
$\mu_{\sigma, \tau}(R') < \mu_{\sigma, \tau}(R)$
for any proper subsheaf $R'$.
A $Q$-sheaf is \emph{polystale}
if it is the direct sum of stable $Q$-sheaf
of the same slope.

\begin{theorem}[{\cite[Theorem 3.1]{MR1989667}}]
A $Q$-sheaf $R$ with $\deg_{\sigma, \tau}(R) = 0$ admits
a Hermitian metric satisfying the quiver vortex equation
\pref{eq:Q_vortex}
if and only if $R$ is $(\sigma, \tau)$-polystable.
This Hermitian metric is unique
up to a multiplication by a positive constant
for each stable summand.
\end{theorem}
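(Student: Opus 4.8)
The plan is to prove the two implications of this Hitchin--Kobayashi correspondence separately, reducing the quiver-specific terms $\sum_{t(a)=v}\phi_a\phi_a^* - \sum_{s(a)=v}\phi_a^*\phi_a$ to bounded lower-order contributions throughout and otherwise following the pattern of the bundle and triple cases (e.g.\ \cite[Theorem 2.1.6]{MR1085139}). For the \emph{necessity} of polystability, suppose a Hermitian metric $h=(h_v)_{v\in Q_0}$ solving \pref{eq:Q_vortex} exists, and let $R'=((E'_v),(\phi_a|_{E'}))$ be a saturated $Q$-subsheaf. Working on the dense open set where every $E'_v$ is a subbundle (its complement has codimension $\ge 2$ and is irrelevant for degrees), let $\pi_v$ be the orthogonal projection onto $E'_v$, restrict \pref{eq:Q_vortex} to $E'_v$, take traces, and combine the Chern--Weil formula for $\deg E'_v$ with the standard identity expressing $\sqrt{-1}\Lambda F_{E'_v}$ in terms of $\pi_v\sqrt{-1}\Lambda F_v\pi_v$ and the second fundamental form $\partialbar\pi_v$, together with the invariance $\phi_a(E'_{s(a)}\otimes M_a)\subset E'_{t(a)}$. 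This yields
\begin{align}
 \deg_{\sigma,\tau}(R') \le - \sum_{v\in Q_0}\sigma_v\,\norm[L^2]{\partialbar\pi_v}^2,
\end{align}
hence $\mu_{\sigma,\tau}(R')\le 0 = \mu_{\sigma,\tau}(R)$, with equality only if each $\partialbar\pi_v=0$, i.e.\ $R'$ is a holomorphic, $\phi$-invariant, orthogonal direct summand; iterating gives $(\sigma,\tau)$-polystability.

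For the \emph{sufficiency}, assume $R$ is $(\sigma,\tau)$-polystable with $\deg_{\sigma,\tau}(R)=0$. First split off stable summands to reduce to $R$ stable, and then run the continuity method of Uhlenbeck--Yau adapted to quiver bundles: fix a background metric $h_0$ and, for $\varepsilon>0$, solve the regularised equation obtained by adding $\varepsilon\log(h_{\varepsilon,v}h_{0,v}^{-1})$ to the left-hand side of \pref{eq:Q_vortex}. For $\varepsilon$ large a solution exists because the zeroth-order term dominates, and the set of admissible $\varepsilon$ is open by the implicit function theorem, the linearisation being a self-adjoint elliptic operator made invertible by the $\varepsilon$-term. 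The crux is closedness as $\varepsilon\to 0$, which requires a uniform bound on $s_\varepsilon\coloneqq\log(h_{\varepsilon,v}h_{0,v}^{-1})$. Integrating the regularised equation and running a Moser iteration gives an a priori estimate $\sup_X|s_\varepsilon|\le C(1+\norm[L^1]{s_\varepsilon})$; if $\norm[L^1]{s_\varepsilon}$ stays bounded then $h\coloneqq\lim h_\varepsilon$ solves \pref{eq:Q_vortex} after elliptic bootstrapping, whereas if it blows up a rescaling and weak-compactness argument --- the quiver analogue of the Uhlenbeck--Yau construction of a weakly holomorphic subbundle, carried out compatibly with every arrow of $Q$ --- produces a saturated $\phi$-invariant $Q$-subsheaf $R'$ with $\mu_{\sigma,\tau}(R')\ge 0$, contradicting stability.

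Finally, for \emph{uniqueness} on a stable $R$, take two solutions $h_1,h_2$, set $s\coloneqq\log(h_{2,v}h_{1,v}^{-1})\in\bigoplus_{v\in Q_0}\Gamma(\End E_v)$, subtract the two copies of \pref{eq:Q_vortex}, pair with $s$ and integrate by parts to conclude $\nabla s=0$ and that $s$ commutes with all $\phi_a$; stability (hence simplicity) forces $s$ to be scalar on each $E_v$, and $\deg_{\sigma,\tau}(R)=0$ forces that scalar to vanish, so on a polystable $R$ exactly one positive constant of freedom survives per stable summand. The main obstacle throughout is the closedness step: establishing the uniform $C^0$-estimate, equivalently extracting the destabilising $Q$-equivariant subsheaf when it fails. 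The bookkeeping of the extra $\phi\phi^*$-terms is routine and they contribute only bounded lower-order pieces, so the analytic heart is the same weak-compactness argument as in the classical bundle case, but the limiting subsheaf and its arrow structure must be produced simultaneously for all of $Q$.
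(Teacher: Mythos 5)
This statement is quoted verbatim from \cite[Theorem 3.1]{MR1989667} (\'Alvarez-C\'onsul--Garc\'ia-Prada); the paper gives no proof of it, so there is no internal argument to compare yours against. Judged on its own terms, your outline reproduces the standard Hitchin--Kobayashi architecture (Chern--Weil plus second fundamental form for necessity; a continuity/heat-flow method with a destabilizing-subsheaf dichotomy for sufficiency; integration by parts plus simplicity for uniqueness), and this is indeed the strategy of the cited reference, which adapts the Bradlow--Simpson scheme to quiver bundles. So the approach is the right one.

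However, as written this is a plan rather than a proof: every analytically hard step is named and then asserted. Existence for large $\varepsilon$, openness of the admissible set, the Moser-iteration $C^0$ estimate, and above all the construction of the destabilizing object when the $L^1$ norm blows up are each nontrivial, and the last one is the genuine crux. One must show that the weak limits of the rescaled endomorphisms yield projections $\pi_v$ that are simultaneously weakly holomorphic at every vertex \emph{and} intertwined by every arrow $\phi_a$ (so that the resulting saturated subsheaf is a $Q$-subsheaf), and that its $(\sigma,\tau)$-slope is nonnegative; none of this follows formally from the bundle case, and "carried out compatibly with every arrow of $Q$" is precisely the content that needs proving. Two smaller points: in the necessity direction, equality in your inequality requires not only $\overline{\partial}\pi_v=0$ but also the vanishing of the nonnegative $\phi$-contribution you discarded (this is what gives $\phi$-invariance of the orthogonal complement, so you should keep that term explicit); and in the uniqueness step your claim that $\deg_{\sigma,\tau}(R)=0$ "forces the scalar to vanish" contradicts the statement you are proving --- a common constant rescaling of all $h_v$ on a stable $Q$-summand leaves the adjoints $\phi_a^*$, hence the equation, unchanged, which is exactly the one-parameter freedom the theorem allows.
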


Quasimaps to $\Mat(r,n) \GIT \GL_r$ corresponds to the case
when the quiver
$
 Q = \lb 1 \to 2 \rb
$
consists of two vertices
and one arrow between them,
$M_1$ and $M_2$ are the structure sheaves,
$\rank E_1 = r$, and
$E_2$ is the trivial bundle of rank $n$.


\subsection{}

Note that the map
$
 V \to \End(V),
$
$
 \phi \mapsto \phi \otimes \phi^*
$
appearing in \pref{eq:vortex_eq3}
is the moment map for the natural action of the unitary group $U(V)$
on $V$.
With this in mind,
a generalization
\begin{align}
 * F_A + \mu(\Phi) = \tau \id_E
\end{align}
of the vortex equation \pref{eq:vortex_eq3}
to the case where one has a Hamiltonian action of a compact group $G$
on a K\"ahler manifold $X$ is given in \cite{MR1801657, MR1777853}.
Here $A$ is a connection on a principal $G$-bundle
on a curve $C$,
$\Phi$ is a holomorphic section of $P \times_G X$, and
$\mu \colon X \to \frakg$ is the moment map.
They are used to define invariants
of a symplectic manifold with a Hamiltonian group action
\cite{MR1777853,MR1953239,MR1959059},
which are closely related
to the Gromov--Witten invariants of the symplectic quotient
\cite{MR2198773, MR3221852,MR3348566,MR3376153,MR3416443}.
\cite{MR2217687} use wall-crossing in vortex invariants
to study quantum cohomology of monotone toric varieties
with minimal Chern number greater than or equal to 2.


\subsection{}

Let $X$ be a K\"ahler manifold
with a Hamiltonian action
of a compact connected Lie group $G$.
We assume that $X$ is either compact
or equivariantly convex at infinity
with a proper moment map.
We fix an invariant inner product
to identify $\frakg^\vee$ with $\frakg$,
and write the moment map as
$\mu \colon X \to \frakg$.


An \emph{affine vortex} is a pair
$(A, u)$ of a connection $A$
on the principal bundle $P = \bC \times G$
and a holomorphic section
$u \colon C \to P \times_G X$
satisfying the \emph{vortex equation}
\begin{align}
 *F_A + \mu(u) = 0.
\end{align}


A \emph{gauged holomorphic map}
to $X$ with respect to the complex Lie group $G_\bC$
acting on $X$
is a map to the quotient stack $[X/G_\bC]$.
In other words,
a gauged holomorphic map
from a scheme $C$ to $X$
is a pair $(P, u)$
of a principal $G_\bC$-bundle $P$ over $C$
and a $G_\bC$-equivariant holomorphic map $u \colon P \to X$.


If the $G_\bC$-action on $X^\semistable$ is free,
then by \cite[Theorem 1.1]{1301.7052},
there is a natural bijection between
the set of affine $K$-vortices with target $X$
up to gauge equivalence 
and the set of pairs gauged holomorphic maps
such that $u(\infty) \in X^\semistable$.
This is an open substack of the set of quasimaps
such that $\infty$ is not contained in the base locus.

%

\bibliographystyle{amsalpha}
\bibliography{bibs}

\end{document}